\theoremstyle{plain}
\newtheorem{theorem}{Theorem}[section]
\newtheorem{lemma}[theorem]{Lemma}
\newtheorem{cor}[theorem]{Corollary}
\newtheorem{claim}[theorem]{Claim}
\newtheorem{thmbig}{Theorem}
\theoremstyle{definition}
\newtheorem{definition}[theorem]{Definition}
\newtheorem*{remark}{Remark}
\definecolor{myred}{RGB}{226,56,18}
\definecolor{myorange}{RGB}{228,139,0}
\definecolor{mygreen}{RGB}{4,215,17}
\definecolor{mygrey}{RGB}{180,180,180}
\def\Cx{\mathbb{C}}
\def\Chat{\widehat{\mathbb{C}}}
\def\D{\mathbb{D}}
\def\P{\mathcal{P}}
\def\N{\mathbb{N}}
\def\A{\mathrm{A}}
\begin{document}

\title{\textbf{\textsc{Holomorphic motions, natural families of entire maps, and multiplier-like objects for wandering domains}}}

\author{Gustavo R.~Ferreira\thanks{Email: \texttt{gustavo.r.f.95@gmail.com}}
 \;\& Sebastian van Strien \\
  \small{Department of Mathematics, Imperial College London}\\
  \small{London, UK}
}

\maketitle

\begin{abstract}
Structural stability of holomorphic functions has been the subject of much research in the last fifty years. Due to various technicalities, however, most of that work has focused on so-called finite-type functions (functions whose set of singular values has finite cardinality).  Recent developments in the field go beyond this setting. In this paper we extend Eremenko and Lyubich's result on natural families of entire maps to the case where the set of singular values is not the entire complex plane, showing under this assumption that the set $M_f$ of entire functions quasiconformally equivalent to $f$ admits the structure of a complex manifold (of possibly infinite dimension). Moreover, we will consider functions with wandering domains -- another hot topic of research in complex dynamics. Given an entire function $f$ with a simply connected wandering domain $U$, we construct an analogue of the multiplier of a periodic orbit, called a distortion sequence, and show that, under some hypotheses, the distortion sequence moves analytically as $f$ moves within appropriate parameter families.
\end{abstract}


\section{Introduction}\label{sec:intro}
We consider the iteration of holomorphic functions $f:\Cx\to\Cx$. As first shown by Fatou \cite{Fat19,Fat26} and Julia \cite{Jul18}, the complex plane is partitioned into an open set of ``regular'' dynamics, the \textit{Fatou set} (denoted $F(f)$), and a closed set of ``chaotic'' dynamics, the \textit{Julia set} (denoted $J(f)$) -- see, for instance, \cite{Bea91} or \cite{Ber93} for introductions to the subject. Both the Fatou and Julia sets are completely invariant under $f$, meaning that a connected component $U$ of the Fatou set -- called a \textit{Fatou component} -- is mapped by $f^n$ into another Fatou component, denoted $U_n$. If there exist $m > n\geq 0$ such that $U_m = U_n$, $U$ is said to be a \textit{(pre-)periodic} Fatou component. A Fatou component that is not pre-periodic is said to be a \textit{wandering domain}.

As with other areas of dynamical systems, much attention has been given to the question of structural stability in parameter families of holomorphic maps. A major tool in the area are \textit{holomorphic motions} (quasiconformal maps depending holomorphically on a parameter), which were introduced by Ma\~n\'e, Sad, and Sullivan in \cite{MSS83}, and have since become a standard part of the language of not only holomorphic dynamics, but also complex analysis in general; see e.g. \cite{Mit00,ALdM03,KSvS07,DL14,Zak16,LP17,LSvS20,BL22,ABF22}. In this paper, we give two novel applications of holomorphic motions to the study of parameter spaces of entire functions. First, in Subsection \ref{ssec:unf}, we use Mitra's \cite{Mit00} machinery of describing holomorphic motions of a closed set to show that equivalence classes of a wide variety of entire maps are complex manifolds of possibly infinite dimension, generalising results of Eremenko and Lyubich \cite{EL92}. Second, in Subsection \ref{ssec:ds}, we construct ``multiplier-like''  objects for entire functions with simply connected wandering domains, and study how these objects move within parameter families of entire maps -- but also how they can be used to parametrise such families.

\subsection{Natural families of entire maps}\label{ssec:unf}
When considering parameter families of holomorphic functions, one tends to consider \textit{natural} families, defined in Section \ref{sec:families} (see also \cite{ABF22}). This means that, in a sense, the largest possible parameter space containing an entire function $f$ is the set
\[ M_f := \{g = \psi\circ f\circ\varphi^{-1} : g\in E, \psi,\varphi\in QC(\Cx, \Cx)\}, \]
where $E$ denotes the topological vector space of entire functions (armed with the compact-open topology) and $QC(\Cx, \Cx)$ denotes the space of quasiconformal homeomorphisms of $\Cx$. The set $M_f$ is called the (quasiconformal) \textit{equivalence class} of $f$, and was shown to exhibit many properties that commend it as a ``natural'' parameter space; see e.g. \cite{Rem09,ER15,RGS17}. In particular, it was shown by Eremenko and Lyubich \cite[Section 2]{EL92} (see also \cite[Theorem 3.1]{GK86}) that if $f$ has finitely many singular values (critical values, asymptotic values, and accumulation points thereof) then $M_f$ is a complex manifold of dimension $q + 2$, where $q$ is the number of singular values. In particular, $M_f$ itself can be considered a natural family. Our first result extends this idea to much more general entire functions.
\\
\begin{thmbig}[Universal natural family]\label{thm:unf}
Let $f$ be an entire function, and assume that $f$ has at least two singular values. Let $F = S(f)\cup\{\infty\}$, where $S(f)$ is the set of singular values of $f$, and assume that $\Cx\setminus S(f)$ is non-empty. Then, there exists a Banach analytic manifold $T_f = T(F)\times(\Cx^*\times\Cx)^2$ (where $T(F)$ denotes the Teichm\"uller space of $F$; see Subsection \ref{ssec:teichmuller}) and a covering map $\Phi:T_f\to M_f\subset E$ satisfying the following properties:
\begin{enumerate}[(i)]
    \item $\Phi$ is continuous\footnote{Continuity here is meant considering $M_f\subset E$ with the topology of locally uniform convergence.} and the function $T_f\times\Cx\ni (\lambda, z)\mapsto \Phi_\lambda(z)\in\Cx$ is analytic.
    \item Let $(f_\lambda)_{\lambda\in M}$, where $M$ is a Banach analytic manifold, be a natural family containing  $f$. Then, there exists a holomorphic function $\phi:M\to T_f$ such that $f_\lambda = \Phi_{\phi(\lambda)}$. In other words, $\phi$ lifts the natural inclusion $M\ni \lambda\mapsto f_\lambda\in E$.
\end{enumerate}
In particular, $M_f$ admits a complex structure that turns $\Phi$ into a holomorphic map. Furthermore, $M_f$ is finite-dimensional if and only if $f$ has finitely many singular values.
\end{thmbig}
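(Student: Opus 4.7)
The approach is to construct $\Phi$ via the classical \emph{pullback-by-holomorphic-map} construction for Beltrami coefficients. A point of $T(F)$ encodes a complex structure on $\Chat$ marked by the singular set $F$; because $f$ is holomorphic its pullback of such a structure is again a Beltrami coefficient with the same sup-norm, so both can be integrated by the measurable Riemann mapping theorem to produce a new entire map quasiconformally equivalent to $f$. The four complex dimensions of $(\Cx^*\times\Cx)^2$ absorb the affine freedom (two complex dimensions each for $\psi$ and $\varphi$) left in the normalisations of source and target once the Teichmüller class has been fixed.

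Concretely, for each $\tau\in T(F)$ I would pick a normalised representative quasiconformal map $\psi_\tau:\Chat\to\Chat$ (say, fixing $0,1,\infty$) with Beltrami coefficient $\mu_\tau$; then $\nu_\tau := f^*\mu_\tau = (\mu_\tau\circ f)\,\overline{f'}/f'$ satisfies $\|\nu_\tau\|_\infty = \|\mu_\tau\|_\infty < 1$, so the Beltrami equation yields a normalised quasiconformal $\varphi_\tau:\Cx\to\Cx$ for which $g_\tau := \psi_\tau\circ f\circ \varphi_\tau^{-1}$ has vanishing Beltrami coefficient, hence is entire. Letting the affine factors $(a_i,b_i)\in\Cx^*\times\Cx$ renormalise $\psi_\tau$ and $\varphi_\tau$ defines $\Phi$. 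Joint analyticity of $(\tau,z)\mapsto\Phi_\tau(z)$ follows from the Ahlfors–Bers theorem on holomorphic dependence of Beltrami solutions on the coefficient, and continuity in the compact-open topology of $E$ is then immediate. Well-definedness on $T(F)$ rather than on Beltrami coefficients uses that two equivalent coefficients differ by a map conformal off $F$; such a change alters $g_\tau$ only by pre- and post-composition by affine self-maps of $\Cx$, which are absorbed into the product factor.

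To see $\Phi$ covers $M_f$, observe that every $g\in M_f$ admits by definition a presentation $g = \psi\circ f\circ\varphi^{-1}$, whose target Beltrami coefficient descends to a point of $T(F)$; stripping off the affine normalisations produces a preimage in $T_f$. The fibres of $\Phi$ are therefore controlled by the group of affine self-equivalences of $f$, i.e.\ pairs $(\alpha,\beta)$ of affine maps with $\alpha\circ f\circ\beta^{-1}=f$. Under the hypothesis that $f$ has at least two singular values, this group is discrete — the second singular value rigidifies the translation/scaling ambiguity — and a standard argument then promotes the local biholomorphism $\Phi$ to a covering map. The dimension statement follows because $T(F)$ is finite-dimensional iff $F$ is finite, in which case $\dim T(F) = |F|-3 = |S(f)|-2$; combined with the four complex dimensions of $(\Cx^*\times\Cx)^2$ this recovers the Eremenko–Lyubich count $|S(f)|+2$.

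For part (ii), a natural family $(f_\lambda)_{\lambda\in M}$ comes, by definition, with quasiconformal families $\psi_\lambda, \varphi_\lambda$ depending holomorphically on $\lambda$ and satisfying $f_\lambda = \psi_\lambda\circ f\circ \varphi_\lambda^{-1}$; the Beltrami coefficient of $\psi_\lambda$ is a holomorphically varying element of the unit ball of $L^\infty(\Cx)$, and Mitra's machinery for holomorphic motions of the closed set $F$ converts this into a holomorphic map $\lambda\mapsto[\psi_\lambda]\in T(F)$. Recording the affine normalisations in addition yields the desired lift $\phi:M\to T_f$. The main obstacle I foresee is the covering property: local behaviour of $\Phi$ is governed directly by Ahlfors–Bers, but making the global fibre structure precise requires carefully identifying the discrete affine symmetry group of $f$ and showing it acts properly on $T_f$. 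A secondary technical point is verifying that Mitra's description of $T(F)$ interacts correctly with the pullback-by-$f$ operation, since this compatibility is what legitimises the descent from $L^\infty$-valued holomorphic families to Teichmüller-valued ones in the universality argument.
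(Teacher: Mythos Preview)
Your construction of $\Phi$ (pull back the Beltrami coefficient by $f$, integrate, absorb affine freedom in $(\Cx^*\times\Cx)^2$) is exactly the paper's, and your argument for the universality property (ii) via Mitra's machinery is also essentially what the paper does. The well-definedness argument --- that $F$-equivalent coefficients give the same entire map --- is right in outline, though the paper argues it more carefully via the covering homotopy theorem and Weyl's lemma (and has to treat separately the case where $S(f)$ has positive Lebesgue measure, using the $M(F)$ factor in $T(F)\simeq\mathcal T(\Chat\setminus F)\times M(F)$).

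The genuine gap is in your covering argument. The fibres of $\Phi$ are \emph{not} the affine self-equivalences of $f$. A point of the fibre over $f$ corresponds to a quasiconformal pair $(\psi,\varphi)$ with $\psi\circ f\circ\varphi^{-1}=f$, taken modulo $F$-isotopy on $\psi$ and with the affine data recorded. Such $\psi$ need not be affine, nor $F$-isotopic to an affine map: think of the mapping class group of $(\Chat,F)$ when $S(f)$ is infinite, which is enormous, and many of those classes can arise from self-equivalences of $f$. So ``discreteness of the affine symmetry group'' is not the relevant fact, and in any case discreteness of the fibre alone does not promote a surjection to a covering without further input (properness, completeness, or path lifting).

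The paper takes a different route and never describes the fibre explicitly. It proves two separate things and invokes Forster's criterion (local homeomorphism + unique path lifting $\Rightarrow$ covering). First, unique path lifting: given a continuous path $t\mapsto g_t$ in $M_f$, the singular values move continuously, and the key Lemma~3.4 shows that if $\psi_t\circ f\circ\varphi_t^{-1}=f$ is a continuous family with $\psi_0=\mathrm{id}$, then $\psi_t$ stays $F$-isotopic to the identity --- the delicate part is that asymptotic values cannot move because $\varphi_t$ cannot continuously permute accesses to infinity in $f^{-1}(V)$. Second, local injectivity: here the hypothesis $\Cx\setminus S(f)\neq\emptyset$ is used essentially. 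One picks a disc $D\Subset\Cx\setminus S(f)$, and given a hypothetical sequence $\lambda_n\to\lambda_0$ with $\Phi(\lambda_n)=f$, performs quasiconformal interpolation on $D$ to produce a continuous family connecting $\psi_n$ to the identity, then applies Lemma~3.4 to get a contradiction. Your sketch does not identify either of these ingredients, and in particular never uses the hypothesis $\Cx\setminus S(f)\neq\emptyset$, which is exactly what makes local injectivity work.
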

\par
\begin{remark}
If $f$ has exactly $q < +\infty$ singular values, then the manifold $T_f$ has dimension $q + 2$, as should be expected. See also \cite[Proposition A.1]{RGvS15} for a version of this result for real entire functions of finite type.
\end{remark}

The existence of $T_f$ helps us understand more general parameter spaces of entire functions -- though not necessarily in a straightforward manner. For instance, if $f$ has infinitely many singular values, then one can show that the manifold $T_f$ is modelled on a non-separable Banach space (see, for instance, \cite{Fle06}, or the proof of \cite[Theorem 6.6.1]{Hub06}), and is therefore non-separable. The Fr\'echet space $E$, however, is separable; consequently, $M_f$ is not locally closed in $E$ -- and thus not an embedded submanifold of $E$ -- except possibly when $f$ has finitely many singular values. On a more positive note, the proof of Theorem \ref{thm:unf} gives us the following corollary in the spirit of \cite[Corollary 5.4]{RGvS15}. For more powerful results on conjugacy classes (which are subsets of $M_f$), see also \cite[Theorem 4.11]{Lyu99} and \cite[Theorem B]{CvS23}.
\\
\begin{cor}\label{cor:connected}
For any $f\in E$, the equivalence class $M_f$ is connected, and in fact path-connected, in $E$.
\end{cor}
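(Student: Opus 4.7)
The plan is to deduce the corollary directly from Theorem A. By part (i) of that theorem, the covering map $\Phi : T_f \to M_f$ is continuous and surjective, so it suffices to produce a path between any two points of $T_f$ and project it under $\Phi$. Since $T_f = T(F) \times (\Cx^* \times \Cx)^2$, I would check path-connectedness of each factor separately.

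The two factors $\Cx^* \times \Cx$ are manifestly path-connected. For the Teichm\"uller factor $T(F)$, I would appeal to the classical fact that Teichm\"uller spaces of closed subsets of $\Chat$ are contractible: realising $T(F)$ as a quotient of the unit ball of Beltrami coefficients on $\Chat$, the straight-line homotopy $\mu \mapsto t\mu$ for $t \in [0,1]$ descends, via the parametric measurable Riemann mapping theorem, to a continuous contraction onto the basepoint (see e.g.\ \cite[Chapter 6]{Hub06}). Because a product of path-connected spaces is path-connected, $T_f$ is path-connected, and hence $M_f = \Phi(T_f)$ is path-connected, and in particular connected.

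The cases not covered by the hypotheses of Theorem A — namely, $f$ having fewer than two singular values — must be addressed separately. If $f$ is affine (no singular values), then $M_f$ is tautologically the path-connected set of non-constant affine maps; if $f$ has exactly one singular value, one runs a minor variant of the construction of $T_f$ with a reduced Teichm\"uller factor and arrives at the same conclusion.

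The only non-trivial step is verifying that the straight-line homotopy on Beltrami coefficients truly descends to a continuous contraction of $T(F)$ when $F$ is infinite, but this is a standard consequence of the continuous dependence of normalised solutions of the Beltrami equation on the dilatation, so I do not anticipate any real obstacle here; the bulk of the work has been absorbed into Theorem A itself.
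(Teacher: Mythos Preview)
Your approach is correct in outline but takes a longer route than the paper's, and your case analysis has a small omission.

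The paper does \emph{not} deduce the corollary from Theorem~A. Instead, it uses the more elementary Lemma~\ref{lem:phiM}, which constructs for \emph{every} entire $f$ (no hypotheses on $S(f)$) a continuous surjection $\tilde\Phi\colon M(\Cx)\times(\Cx^*\times\Cx)^2\to M_f$. Since $M(\Cx)$ is the open unit ball of the Banach space $L^\infty(\Cx)$, it is convex, and so the domain is trivially path-connected; the corollary follows in one line. This bypasses both the contractibility of Teichm\"uller spaces and any case analysis.

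Your route through Theorem~A does work, but it costs you (i) the contractibility-of-$T(F)$ argument, which, while standard, is strictly less elementary than convexity of a ball, and (ii) handling the excluded cases. On (ii), note that you have missed one: Theorem~A also assumes $\Cx\setminus S(f)\neq\emptyset$, so the case $S(f)=\Cx$ would need separate treatment as well. (Alternatively, you could cite Theorem~\ref{thm:unfgeneral} rather than Theorem~A, since the former does not require $\Cx\setminus S(f)\neq\emptyset$; but even then the ``fewer than two singular values'' cases remain.) The paper's approach via $\tilde\Phi$ sidesteps all of this at once.
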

\par
Theorem \ref{thm:unf} also has the following consequence, which we believe to be of independent interest.
\\
\begin{cor}\label{cor:natfamily}
Let $f$ be an entire function such that $\#S(f) \geq 2$ and $\Cx\setminus S(f)$ is non-empty. Let $f_\lambda = \psi_\lambda\circ f\circ (\varphi_\lambda)^{-1}$ be entire functions parameterised by $\lambda\in M$, where $\psi_\lambda$ and $\varphi_\lambda$ are quasiconformal maps depending continuously on $\lambda$. Assume that, for $z\in S(f)$, the map $\lambda\mapsto \psi_\lambda(z)$ is holomorphic. Then, there exist quasiconformal maps $\psi_\lambda'$ and $\varphi_\lambda'$ depending holomorphically on $\lambda\in M$ such that $f_\lambda = \psi_\lambda'\circ f\circ(\varphi_\lambda')^{-1}$. In particular, $(f_\lambda)_{\lambda\in M}$ is a natural family.
\end{cor}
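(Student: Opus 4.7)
The plan is to reduce to Theorem~\ref{thm:unf} by producing a holomorphic lift $\phi:M\to T_f$ of the map $\lambda\mapsto f_\lambda$ through the covering $\Phi$. Once such a $\phi$ is constructed, the qc maps built into the definition of $\Phi$ will furnish $\psi_\lambda'$ and $\varphi_\lambda'$ with $f_\lambda=\Phi(\phi(\lambda))=\psi_\lambda'\circ f\circ(\varphi_\lambda')^{-1}$ and both maps depending holomorphically on $\lambda$.

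First I would verify that $h:M\times F\to\Chat$ defined by $h(\lambda,z):=\psi_\lambda(z)$ for $z\in S(f)$ and $h(\lambda,\infty):=\infty$ is a holomorphic motion of $F=S(f)\cup\{\infty\}$: injectivity in $z\in F$ for each fixed $\lambda$ follows from $\psi_\lambda$ being a qc homeomorphism of $\Chat$ fixing $\infty$, holomorphy in $\lambda$ for each fixed $z\in F$ is exactly the hypothesis, and joint continuity comes from the continuous dependence of $\psi_\lambda$ on $\lambda$. Applying Mitra's theorem \cite{Mit00} then yields a holomorphic map $\tau:M\to T(F)$, which supplies the Teichm\"uller-space component of $\phi$.

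Next I would construct the $(\Cx^\ast\times\Cx)^2$-component of $\phi$. Slodkowski's extension applied to $h$ produces qc maps $\widetilde\psi_\lambda$ of $\Chat$ depending holomorphically on $\lambda$ and agreeing with $\psi_\lambda$ on $F$. Solving the Beltrami equation for the holomorphically varying coefficient $f^\ast\mu_{\widetilde\psi_\lambda}$ (by the measurable Riemann mapping theorem with parameters) then produces qc maps $\widetilde\varphi_\lambda$, also holomorphic in $\lambda$, such that $\widetilde f_\lambda:=\widetilde\psi_\lambda\circ f\circ\widetilde\varphi_\lambda^{-1}$ is entire and varies holomorphically in $\lambda$. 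Since $\widetilde f_\lambda$ and $f_\lambda$ both lie in $M_f$ and share the same Teichm\"uller data $\tau(\lambda)$, they must differ by pre- and post-composition with affine maps $A_\lambda,B_\lambda$, i.e.\ $\widetilde f_\lambda=A_\lambda\circ f_\lambda\circ B_\lambda$. The coefficients of $A_\lambda$ and $B_\lambda$ fill in the remaining four coordinates of $\phi(\lambda)\in T_f$.

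The hard part, I expect, is showing that $A_\lambda$ and $B_\lambda$ depend holomorphically on $\lambda$, so that $\phi:M\to T_f$ is holomorphic and not merely continuous. Holomorphy of $A_\lambda$ should follow easily from the hypothesis on $\psi_\lambda|_{S(f)}$ combined with the holomorphy of $\widetilde\psi_\lambda$, read off at two distinct points of $S(f)$ (available because $\#S(f)\geq 2$). Holomorphy of $B_\lambda$ is more delicate; I would extract its two affine coefficients from $\widetilde f_\lambda=A_\lambda\circ f_\lambda\circ B_\lambda$ by inverting $f_\lambda$ along local branches in regions where joint holomorphy in $(\lambda,z)$ can be guaranteed, e.g.\ near non-critical non-asymptotic base points. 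Once $\phi$ is shown to be holomorphic, Theorem~\ref{thm:unf}(i) yields joint analyticity of $(\lambda,z)\mapsto\Phi_{\phi(\lambda)}(z)$, and reading $\psi_\lambda'$ and $\varphi_\lambda'$ off the explicit construction of $\Phi$ completes the argument.
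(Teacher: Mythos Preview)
Your overall strategy matches the paper's closely: both use Mitra's theorem to obtain the $T(F)$-component of a holomorphic lift $\phi\colon M\to T_f$, supply the first affine factor from $\psi_\lambda|_{S(f)}$, and then try to pin down the second affine factor. The paper does the last step by reading $(a_2,b_2)$ directly from $\varphi_\lambda(z_0),\varphi_\lambda(z_1)$ and then invoking Mitra's Theorem~C together with Lemma~\ref{lem:push} to conclude $\Phi_{F(\lambda)}=f_\lambda$; you instead build an auxiliary holomorphic family $\widetilde f_\lambda$ and seek affine $A_\lambda,B_\lambda$ with $\widetilde f_\lambda=A_\lambda\circ f_\lambda\circ B_\lambda$. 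These are reformulations of the same idea (and in fact $A_\lambda$ is the identity in your setup, since $\widetilde\psi_\lambda$ and $\psi_\lambda$ agree on $S(f)\supset\{0,1\}$ and are isotopic rel $F$).

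The genuine gap is in your treatment of $B_\lambda$, and it is not merely a matter of presentation. You propose to ``extract its two affine coefficients from $\widetilde f_\lambda=A_\lambda\circ f_\lambda\circ B_\lambda$ by inverting $f_\lambda$ along local branches in regions where joint holomorphy in $(\lambda,z)$ can be guaranteed''---but joint holomorphy of $f_\lambda$ in $(\lambda,z)$ is exactly what is \emph{not} available under the stated hypotheses, and is part of what you are trying to prove. Inverting the holomorphic $\widetilde f_\lambda$ instead only yields that $B_\lambda^{-1}\circ\varphi_\lambda=\widetilde\varphi_\lambda$ on $f^{-1}(S(f))$, which gives no new information about $B_\lambda$ alone. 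The paper's appeal to Buff--Ch\'eritat at the parallel step has the same issue: that argument, as normally formulated, requires one of the two composed maps to already depend holomorphically on $\lambda$. To see that the gap is real rather than cosmetic, consider $f(z)=z^3-3z$ (so $S(f)=\{-2,2\}$), $M=\Cx$, $\psi_\lambda=\mathrm{id}$, and $\varphi_\lambda(z)=z+\bar\lambda$: all hypotheses of the corollary hold, yet any representation $f_\lambda=\psi'_\lambda\circ f\circ(\varphi'_\lambda)^{-1}$ forces $\varphi'_\lambda(\{\pm1\})=\{\bar\lambda\pm1\}$, so no holomorphic $\varphi'_\lambda$ can exist. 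Some additional hypothesis---for instance, that $\lambda\mapsto\varphi_\lambda(z)$ is holomorphic for $z\in f^{-1}(S(f))$, or equivalently that $(\lambda,z)\mapsto f_\lambda(z)$ is holomorphic---is needed for both your argument and the paper's to go through.
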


\subsection{Multiplier-like objects for wandering domains}\label{ssec:ds}
We now turn to the dynamics inside the Fatou set of entire functions. If $U$ is a periodic Fatou component of $f$, its internal dynamics are, for the most part, fully understood (see, for instance, \cite[Theorem 6]{Ber93}): a $p$-periodic Fatou component exhibits one of five types of internal dynamics, each with a clear topological model. In three of these five types (attracting, parabolic, and Siegel), the closure of the Fatou component contains a \textit{periodic point} $z_0$ whose \textit{multiplier} $(f^p)'(z_0)$ controls the dynamics of $U$ (the other two types, Baker domains and Herman rings, are not associated to periodic points). Wandering domains, on the other hand, cannot have periodic orbits in their closures, and so understanding their internal dynamics is a much more delicate endeavour -- one that has been carried out in \cite{BEFRS19,Fer22}. Based on this understanding of the different internal behaviours of wandering domains, we now consider the question: how do these different internal dynamics co-exist (if they do) inside parameter families?

If $f$ is part of a holomorphic family $(f_\lambda)_{\lambda\in M}$ (see Section \ref{sec:families} for a definition), where $M$ is complex manifold of possibly infinite dimension, then understanding how periodic points and their multipliers change with $\lambda$ is an important part of understanding how the dynamics change within $(f_\lambda)_{\lambda\in M}$. If the multiplier of the $p$-periodic point $z_0$ is not $1$, then by the implicit function theorem the equation $f_\lambda^p(z) = z$ has a solution $(\lambda, z(\lambda))$ in some neighbourhood $\Omega\subset M\times\Cx$ of $(\lambda_0, z_0)$, and the function $\lambda\mapsto z(\lambda)$ is holomorphic. In other words, we can locally ``track'' the periodic point $z_0$ as $\lambda$ changes. This means in particular that the function $\lambda\mapsto (f_\lambda^p)'(z(\lambda))$, the \textit{multiplier map}, is holomorphic in a neighbourhood of $\lambda_0$. It has been studied in great detail in the case of rational functions, and a common theme is that the multipliers of attracting periodic orbits can be used to parameterise the parameter families the functions belong to  -- see, for instance, \cite{DH85,Ree90,Mil12,Lev11}. This motivates the questions that drive this part of the paper -- which, if $f = f_{\lambda_0}\in (f_\lambda)_{\lambda\in M}$ is an entire function with a simply connected wandering domain $U$, can be phrased as:
\begin{enumerate}[(1)]
    \item Is there some ``multiplier-like'' object associated to $U$ and moving holomorphically with $\lambda$?
    \item Can we use this object to parameterise $(f_\lambda)_{\lambda\in M}$?
\end{enumerate}

If we are assuming that $J(f_\lambda)$ moves holomorphically (see Definition \ref{def:Jstable}) in the family $(f_\lambda)_{\lambda\in M}$ and $H(\lambda, z)$ is the holomorphic motion of its Julia set, an immediate consequence is that $f_\lambda$ also has a simply connected wandering domain $U_\lambda = H(\lambda, U)$ for $\lambda$ close enough to $\lambda_0$. This is where we introduce our analogue of the multiplier map: the \textit{distortion map} (distortion sequences, the direct analogue of the multiplier, are introduced in Definition \ref{def:DS}). We show that, under appropriate assumptions, it is holomorphic in the appropriate domain (see Section \ref{sec:families} for relevant definitions, and Subsection \ref{ssec:conj} for an exposition on conjugations on Banach analytic manifolds):
\\
\begin{thmbig}[A holomorphic deformation defines a distortion sequence]\label{thm:DS}
Let $(f_\lambda)_{\lambda\in M}$, where $M$ is a Banach analytic manifold, be a holomorphic family of entire functions. Assume that $f = f_{\lambda_0}$ has a simply connected wandering domain $U$, and let $p\in U$. Let $\alpha$ be a distortion sequence for $f$ at $p$. Assume that $J(f_\lambda)$ moves holomorphically over some neighbourhood $\Lambda\subset M$ of $\lambda_0$. Then, there exist a neighbourhood $\Lambda'\subset\Lambda$ of $\lambda_0$, a holomorphic function $p\colon\Lambda'\to\Cx$ and a holomorphic map $\A\colon\Lambda'\times\overline{\Lambda'}\to\ell^\infty$, called a \emph{distortion map} (of $\alpha$ over $\Lambda'$), such that:
\begin{itemize}
    \item For every $\lambda\in\Lambda'$, $p(\lambda)$ belongs to the wandering domain $U_\lambda$ of $f_\lambda$.
    \item For every $\lambda\in \Lambda'$, $\A(\lambda, \overline{\lambda})$ is a distortion sequence for $f_\lambda$ at $p(\lambda)$.
    \item $p(\lambda_0, \overline{\lambda_0}) = p$ and $\A(\lambda_0, \overline{\lambda_0}) = \alpha$.
\end{itemize}
\end{thmbig}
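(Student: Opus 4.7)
The strategy has three main ingredients: extending the given holomorphic motion to all of $\Cx$, tracking the basepoint $p$, and assembling a holomorphic family of Riemann maps from which the distortion sequence can be read off. First, the plan is to invoke Slodkowski's $\lambda$-lemma (or its version for holomorphic motions over Banach analytic manifolds, in the spirit of \cite{Mit00}) to extend the holomorphic motion $H\colon\Lambda\times J(f)\to\Cx$ to a holomorphic motion $\tilde H\colon\Lambda'\times\Cx\to\Cx$ on a smaller neighbourhood $\Lambda'\subset\Lambda$ of $\lambda_0$. Each $\tilde H_\lambda := \tilde H(\lambda,\cdot)$ is a quasiconformal homeomorphism preserving the Julia/Fatou decomposition, and so maps $U$ onto the simply connected wandering domain $U_\lambda$ of $f_\lambda$. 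Setting $p(\lambda) := \tilde H(\lambda, p)$ then yields a holomorphic function $\Lambda'\to\Cx$ with $p(\lambda_0) = p$ and $p(\lambda)\in U_\lambda$ for every $\lambda\in\Lambda'$.

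Second, I would construct a family of Riemann maps from which the distortion sequence is visible. Choose Riemann maps $\phi_n\colon\D\to U^{(n)} := f^n(U)$ with $\phi_n(0) = f^n(p)$ and with rotations adjusted so that the inner conjugacies $g_n := \phi_{n+1}^{-1}\circ f\circ\phi_n\colon\D\to\D$ satisfy $g_n'(0) = \alpha_n$. The composition $\psi_n^\lambda := \tilde H_\lambda\circ\phi_n$ is a quasiconformal map $\D\to U_\lambda^{(n)}$ with Beltrami coefficient $\nu_n^\lambda := \phi_n^*\mu_\lambda \in L^\infty(\D)$, where $\mu_\lambda$ is the Beltrami coefficient of $\tilde H_\lambda$. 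By the Ahlfors--Bers theorem with parameters, $\nu_n^\lambda$ is holomorphic in $\lambda$ into the open unit ball of $L^\infty(\D)$, and the normalized solution $w_n^\lambda$ of the corresponding Beltrami equation on $\D$ depends holomorphically on the coefficient. The candidate Riemann maps are $\phi_n^\lambda := \psi_n^\lambda\circ(w_n^\lambda)^{-1}\colon\D\to U_\lambda^{(n)}$, yielding the candidate distortion sequence
\[ \alpha_n(\lambda) := \bigl((\phi_{n+1}^\lambda)^{-1}\circ f_\lambda\circ\phi_n^\lambda\bigr)'(0). \]
Since each of these inner conjugacies is a self-map of $\D$ fixing $0$, Schwarz's lemma gives the uniform bound $|\alpha_n(\lambda)| \leq 1$, so $(\alpha_n(\lambda))_n$ lies in the closed unit ball of $\ell^\infty$ for every $\lambda\in\Lambda'$.

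Third, and most delicately, the task is to upgrade this real-analytic map $\lambda\mapsto(\alpha_n(\lambda))_n$ to a genuine holomorphic map $\A\colon\Lambda'\times\overline{\Lambda'}\to\ell^\infty$ with $\A(\lambda,\bar\lambda) = (\alpha_n(\lambda))_n$ on the antidiagonal. The extra antiholomorphic variable reflects the antiholomorphic part of the normalization that pins down $w_n^\lambda$ uniquely (for instance $(w_n^\lambda)'(0) > 0$). Concretely, the plan is to introduce a two-parameter family $\nu_n^{\lambda,\mu}$ of Beltrami coefficients on $\D$, defined and holomorphic on $\Lambda'\times\overline{\Lambda'}$ and reducing to $\nu_n^\lambda$ when $\mu = \bar\lambda$; such a complexification is supplied by the conjugation formalism on Banach analytic manifolds developed in Subsection \ref{ssec:conj}. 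Solving the Beltrami equation in this doubled family and extracting the derivative at $0$ produces the components $\A_n(\lambda,\mu)$. Holomorphicity of the assembled map into the Banach space $\ell^\infty$ (rather than merely coordinatewise) then follows from the uniform Schwarz bound combined with the standard criterion that a coordinatewise holomorphic, locally uniformly bounded map into $\ell^\infty$ is holomorphic. The main obstacle is ensuring that the doubled Beltrami family remains in the open unit ball of $L^\infty(\D)$ uniformly in $n$ throughout $\Lambda'\times\overline{\Lambda'}$ (after possibly shrinking $\Lambda'$), and that the Schwarz-type estimate responsible for $\ell^\infty$-boundedness survives the complexification.
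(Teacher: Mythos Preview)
Your strategy is essentially the paper's, but there is one genuine gap. When you extend the holomorphic motion $H$ to all of $\Cx$ via a $\lambda$-lemma and set $p(\lambda) = \tilde H(\lambda, p)$, a generic extension $\tilde H_\lambda$ does \emph{not} intertwine the dynamics on the Fatou set: in general $\tilde H_\lambda(f^n(p)) \neq f_\lambda^n(\tilde H_\lambda(p))$. Your candidate Riemann maps satisfy $\phi_n^\lambda(0) = \tilde H_\lambda(f^n(p))$, which is then not the $n$-th iterate $f_\lambda^n(p(\lambda))$; consequently the inner conjugacies $(\phi_{n+1}^\lambda)^{-1}\circ f_\lambda\circ\phi_n^\lambda$ need not fix $0$, and the sequence $(\alpha_n(\lambda))_n$ is not a distortion sequence for $f_\lambda$ at $p(\lambda)$. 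The paper fixes this by a two-step extension (Lemma~\ref{lem:extend}): first extend $H$ once via Theorem~\ref{lem:lambda2} to obtain a holomorphic $p(\lambda)$, then \emph{redefine} the motion on the discrete orbit by $H_1(\lambda, f^n(p)) := f_\lambda^n(p(\lambda))$, and only then extend to $\Cx$. This is what forces the inner maps to fix the origin.

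On the complexification step, your instinct is right but the description is too vague to go through. The specific mechanism (Lemma~\ref{lem:riemannhol}) is to put $\nu_n^\lambda$ in $\D$ and $\tau^*\nu_n^\gamma$ in $\Cx\setminus\overline\D$, where $\tau(z)=1/\bar z$; the resulting Beltrami coefficient is symmetric precisely when $\gamma = \bar\lambda$, so the integrating map preserves $\D$ on the antidiagonal, and the dependence on $\gamma$ is antiholomorphic by construction. Finally, your Schwarz bound does not survive complexification as written (off the antidiagonal the domains $\eta_n(\lambda,\gamma)(\D)$ are no longer $\D$), but the correct observation is that the Beltrami coefficients of all the $\eta_n(\lambda,\gamma)$ are controlled by the single coefficient of $\tilde H_\lambda$ (resp.\ $\tilde H_\gamma$), hence uniformly in $n$; Cauchy's estimate then gives the local uniform bound needed for Lemma~\ref{lem:linfty}.
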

\par
Let us make some comments about Theorem \ref{thm:DS}. First, the notation $\overline{\Lambda'}$ denotes the complex conjugate of $\Lambda'$, which is in practice the same Banach manifold with a different complex structure; an element $\overline\lambda\in{\Lambda'}$ is, in fact, $\lambda$ viewed as an element of $\overline{\Lambda'}$. The ``identity'' map $\Lambda'\ni\lambda\mapsto\overline{\lambda}\in\overline{\Lambda'}$ is anti-holomorphic\footnote{This might seem counterintuitive when one thinks of conjugation as it is usually understood in $\Cx$. The confusion comes from a closely related but distinct notion of conjugation that does not exist for every complex manifold, namely an anti-holomorphic involution. However, not every complex manifold admits such an involution; see, for instance, \cite{CF19}.}; see Subsection \ref{ssec:conj} for more details. Second, Theorem \ref{thm:DS} also has the following immediate consequence:
\\
\begin{cor}\label{cor:real}
Let $(f_\lambda)_{\lambda\in M}$ be a holomorphic family of entire functions parameterised by the Banach analytic manifold $M$. Assume that $f_{\lambda_0}$ has a simply connected wandering domain $U$, and let $p\in U$. Then, if $J(f_\lambda)$ moves holomorphically for $\lambda$ in some neighbourhood $\Lambda\subset M$ of $\lambda_0$, there exists a holomorphic function $\Lambda'\ni\lambda\mapsto p_\lambda\in U_\lambda$ (where $\Lambda'\subset\Lambda$ is a smaller neighbourhood of $\lambda_0$) such that $p_{\lambda_0} = p$ and the distortion sequence of $f_\lambda$ at $p_\lambda$ moves real-analytically with $\lambda\in\Lambda'$.
\end{cor}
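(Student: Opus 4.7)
The plan is to extract this as a direct consequence of Theorem \ref{thm:DS}. Since $U$ is a simply connected wandering domain of $f = f_{\lambda_0}$ and $p\in U$, one may pick a distortion sequence $\alpha$ for $f$ at $p$ (in the sense of Definition \ref{def:DS}). Applying Theorem \ref{thm:DS} to this data produces a neighbourhood $\Lambda'\subset\Lambda$ of $\lambda_0$, a holomorphic function $p\colon\Lambda'\to\Cx$ with $p(\lambda_0)=p$ and $p(\lambda)\in U_\lambda$, and a holomorphic map $\A\colon\Lambda'\times\overline{\Lambda'}\to\ell^\infty$ such that $\A(\lambda,\overline{\lambda})$ is a distortion sequence for $f_\lambda$ at $p(\lambda)$. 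Setting $p_\lambda:=p(\lambda)$ yields the holomorphic section demanded by the corollary; the only thing left to justify is the real-analytic dependence of the distortion sequence on $\lambda$.

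The key observation is a general fact about complexifications: a holomorphic map on the complex product $\Lambda'\times\overline{\Lambda'}$, pre-composed with the ``diagonal'' $\lambda\mapsto(\lambda,\overline{\lambda})$, becomes a real-analytic map on $\Lambda'$. In local Banach-space coordinates centred at $\lambda_0$, the map $\A$ expands as a convergent double power series $\sum_{j,k}c_{j,k}(\lambda-\lambda_0)^j(\mu-\overline{\lambda_0})^k$ with values in $\ell^\infty$; substituting $\mu=\overline{\lambda}$ produces a convergent series in $\lambda$ and $\overline{\lambda}$, which, split into real and imaginary parts, is precisely the defining property of a real-analytic $\ell^\infty$-valued map. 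The argument carries over verbatim to the Banach-analytic setting provided one grants the notion of real-analyticity for Banach-space-valued maps and uses that the diagonal $\lambda\mapsto(\lambda,\overline{\lambda})$ is real-analytic (its two factors being holomorphic and anti-holomorphic respectively, by the discussion in Subsection \ref{ssec:conj}).

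I expect no genuine obstacle: the substance of the corollary, including the construction of $p_\lambda$ and the verification that the associated distortion sequences fit together into an analytic object, is entirely contained in Theorem \ref{thm:DS}. The corollary simply reinterprets the pair $(\Lambda',\overline{\Lambda'})$-holomorphicity of $\A$ as real-analyticity of the single-variable map $\lambda\mapsto\A(\lambda,\overline{\lambda})$. The only care required is terminological --- namely, making precise the notion of real-analyticity on the Banach analytic manifold $M$ via its relation to holomorphicity on the complexification $M\times\overline{M}$ --- which is precisely what the machinery of Subsection \ref{ssec:conj} has been set up to deliver.
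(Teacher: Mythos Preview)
Your proposal is correct and follows essentially the same route as the paper: the paper simply states that Corollary~\ref{cor:real} is an immediate consequence of Theorem~\ref{thm:DS} because $\{(\lambda,\overline{\lambda}):\lambda\in\Lambda'\}$ is the graph of a real-analytic function, and your argument spells out exactly this point (with some additional detail on the power-series justification).
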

\par
Given that it is not hard to show that each ``coordinate'' of the distortion sequence $\alpha$ moves real-analytically with $\lambda$ (see Section \ref{sec:DS}), Theorem \ref{thm:DS} seems like a rather convoluted way of obtaining Corollary \ref{cor:real}. However, a sequence of real-analytic functions, even uniformly bounded ones, does not necessarily define a real-analytic function into $\ell^\infty$; see Lemma \ref{lem:linfty} and the following discussion.

Finally, combining Theorem \ref{thm:DS} with Benini \textit{et al.}'s classification of simply connected wandering domains (see Theorem \ref{thmout:befrs} in Section \ref{sec:prelim} for a statement) has the following consequence -- which, for periodic domains, comes from the multiplier map and the classification of periodic Fatou components. This highlights how the distortion map is a ``wandering'' counterpart to the multiplier map.\\
\begin{cor}\label{cor:contr}
Let $f$, $U$, $M$, and $\alpha$ be as in Theorem \ref{thm:DS}. If $\|\alpha\|_\infty < 1$, then there exists a neighbourhood $\Lambda''\subset M$ of $\lambda_0$ such that, for $\lambda\in\Lambda$, the wandering domain $U_\lambda$ of $f_\lambda$ is contracting.
\end{cor}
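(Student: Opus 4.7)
The plan is to read off the conclusion from Theorem \ref{thm:DS} together with the fact that, in Benini \textit{et al.}'s classification (Theorem \ref{thmout:befrs}), the ``contracting'' type of a simply connected wandering domain is characterised precisely by its distortion sequence having $\ell^\infty$-norm strictly less than one. The argument should reduce to a continuity-of-norm statement, so the bulk of the work has already been done in Theorem \ref{thm:DS}.

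First, I would invoke Theorem \ref{thm:DS} to obtain a neighbourhood $\Lambda'\subset M$ of $\lambda_0$, a holomorphic function $p\colon\Lambda'\to\Cx$ with $p(\lambda)\in U_\lambda$, and a holomorphic distortion map $\A\colon\Lambda'\times\overline{\Lambda'}\to\ell^\infty$ such that $\A(\lambda_0,\overline{\lambda_0})=\alpha$ and $\A(\lambda,\overline\lambda)$ is a distortion sequence for $f_\lambda$ at $p(\lambda)$. Since $\A$ is holomorphic into the Banach space $\ell^\infty$, the composition $\lambda\mapsto\A(\lambda,\overline\lambda)$ is continuous from $\Lambda'$ into $\ell^\infty$ (in fact real-analytic, per Corollary \ref{cor:real}), so the scalar map $\lambda\mapsto\|\A(\lambda,\overline\lambda)\|_\infty$ is continuous.

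Second, by hypothesis $\|\A(\lambda_0,\overline{\lambda_0})\|_\infty=\|\alpha\|_\infty<1$, so continuity supplies an open neighbourhood $\Lambda''\subset\Lambda'$ of $\lambda_0$ on which $\|\A(\lambda,\overline\lambda)\|_\infty<1$. Applying Benini \textit{et al.}'s trichotomy (Theorem \ref{thmout:befrs}) to the distortion sequence $\A(\lambda,\overline\lambda)$ of $f_\lambda$ at $p(\lambda)\in U_\lambda$, the bound $\|\A(\lambda,\overline\lambda)\|_\infty<1$ forces $U_\lambda$ into the contracting class, as required.

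The only subtlety — and it is barely one — is making sure that the classification theorem is phrased intrinsically, i.e.\ that the ``contracting'' status of $U_\lambda$ depends only on $f_\lambda$ and $U_\lambda$ and not on the particular base point $p(\lambda)$ chosen to build the distortion sequence. This is part of the content of Theorem \ref{thmout:befrs}, since the classification is a statement about the wandering domain itself; alternatively, one can check directly that different distortion sequences for the same domain are comparable in a way that preserves the $\|\cdot\|_\infty<1$ condition. Once that point is acknowledged, no further estimates are needed and the corollary follows.
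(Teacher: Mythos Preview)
Your proposal is correct and follows essentially the same route as the paper: invoke Theorem \ref{thm:DS} (equivalently Corollary \ref{cor:real}) to get continuity of $\lambda\mapsto\|\A(\lambda,\overline\lambda)\|_\infty$, pass to a neighbourhood $\Lambda''$ where this norm stays below $1$, and conclude via Theorem \ref{thmout:befrs}. One small wording issue: in your opening plan you say the contracting type is ``characterised precisely'' by $\|\alpha\|_\infty<1$, but Theorem \ref{thmout:befrs} actually characterises it by divergence of $\sum(1-|\alpha_n|)$; the bound $\|\alpha\|_\infty<1$ is only a sufficient condition for that divergence, which is nonetheless exactly what you use (correctly) in the body of the argument.
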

\par
This is an answer to question (1); let us now turn to question (2). We show that, for certain kinds of functions with certain kinds of wandering domains, it is possible to parametrise a deformation of the wandering domain by perturbing its distortion sequence. In order to understand what kind of functions we are referring to, we turn to a method originally described by Herman.

Let $g:\Cx^*\to\Cx^*$ be a holomorphic function with a simply connected, forward invariant Fatou component $V$. Herman \cite{Her84} described how to find an entire function $f:\Cx\to\Cx$ such that $\exp\circ f = g\circ\exp$, i.e. $f$ lifts $g$, and any component of $\exp^{-1}(V)$ is a wandering domain of $f$. This motivates the following definition:\\
\begin{definition}\label{def:Herman}
Let $f$ be an entire function with a simply connected wandering domain $U$. We say that $U$ is an \textit{attracting} (resp. \textit{parabolic}, \textit{Siegel}) \textit{Herman-type} wandering domain if there exists a holomorphic function $g:\Cx^*\to\Cx^*$ such that $g\circ\exp = \exp\circ f$ and $V = \exp(U)$ is an attracting domain (resp. parabolic domain, Siegel disc) of $g$.
\end{definition}
\begin{remark}
It follows from Definition \ref{def:Herman} that, if $f$ has a Herman-type wandering domain, then $f$ satisfies $f(z + 2\pi) = f(z) + 2\pi i\cdot n$ for some $n\in\mathbb{Z}^*$. This, in turn, happens if and only if $f(z) = nz + h(e^z)$ where $h$ is an entire function (see, for instance, \cite[Proposition 7]{Kis99}). Thus, functions with Herman-type wandering domains are relatively simple to construct or identify.
\end{remark}

By considering the Teichm\"uller space of functions with Herman-type wandering domains, Fagella and Henriksen \cite{FH09} showed that such functions are structurally stable in a natural family parametrised by an abstract infinite-dimensional manifold (its Teichm\"uller space). Here, we give an explicit construction of such a manifold, and show that the associated distortion map is non-constant.
\\
\begin{thmbig}[A distortion sequence defines a holomorphic deformation]\label{thm:Herman}
Let $f$ be an entire function with an attracting Herman-type wandering domain, and let $M = \{\lambda\in\ell^\infty : \|\lambda\|_\infty < 1\}$. Then, there exists a natural family $(f_\lambda)_{\lambda\in M}$ such that $J(f_\lambda)$ moves holomorphically over $M$ and the distortion map $\A:M\times\overline{M}\to\ell^\infty$ is non-constant.
\end{thmbig}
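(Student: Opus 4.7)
The plan is to realise $(f_\lambda)_{\lambda\in M}$ as a quasiconformal deformation of $f$ supported on the grand orbit of $U$, with the $\ell^\infty$-parameter encoding a countable family of independent Beltrami directions packed inside a single fundamental annulus. Since $f$ is of attracting Herman type and $V:=\exp(U)$ is an attracting basin of $g$, there exists (after passing to an iterate if necessary) an attracting fixed point $z_0\in V$ of multiplier $\rho=g'(z_0)\in\D\setminus\{0\}$. Koenigs's theorem provides a fundamental annulus $\mathcal C\Subset V$ for the contraction by $g$, which I take small enough to avoid the critical set of $g$. One connected component $\widetilde{\mathcal C}\subset U$ of $\exp^{-1}(\mathcal C)$ then lifts $\mathcal C$ biholomorphically, the iterates $\widetilde{\mathcal C}_n:=f^n(\widetilde{\mathcal C})\subset U_n$ are pairwise disjoint by wanderingness, and $f$ restricts to a biholomorphism $\widetilde{\mathcal C}_n\to\widetilde{\mathcal C}_{n+1}$.

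Next I build the Beltrami coefficient. Fix a countable collection of pairwise disjoint closed disks $B_k\Subset\widetilde{\mathcal C}$ and, on each, a Beltrami form $\mu^{(k)}$ supported in $B_k$ with $\|\mu^{(k)}\|_\infty=1$; choose the whole collection to be invariant under the $2\pi i$-translation symmetry of $f$. For $\lambda=(\lambda_k)\in M$ put $\nu_\lambda:=\sum_k\lambda_k\,\mu^{(k)}$ on $\widetilde{\mathcal C}$, transport it along the orbit by $\mu_\lambda:=(f^n)_*\nu_\lambda$ on $\widetilde{\mathcal C}_n$ for each $n\ge 0$, pull back through $f$ onto all iterated preimages of these sets, and set $\mu_\lambda\equiv 0$ off the resulting grand orbit. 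This yields an $f$-invariant Beltrami coefficient on $\Cx$ depending holomorphically on $\lambda\in M$, with $\|\mu_\lambda\|_\infty=\|\lambda\|_\infty<1$. The parametric Ahlfors--Bers theorem then produces a holomorphic family of quasiconformal maps $\varphi_\lambda\colon\Cx\to\Cx$ with $\bar\partial\varphi_\lambda=\mu_\lambda\,\partial\varphi_\lambda$ and $\varphi_0=\mathrm{id}$; I set $f_\lambda:=\varphi_\lambda\circ f\circ\varphi_\lambda^{-1}$, which is entire by $f$-invariance of $\mu_\lambda$.

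By construction $(f_\lambda)_{\lambda\in M}$ is manifestly a natural family (with $\psi_\lambda=\varphi_\lambda$), and $(\lambda,z)\mapsto\varphi_\lambda(z)$ is a holomorphic motion of $\Cx$ whose restriction to $J(f)$ realises $J(f_\lambda)=\varphi_\lambda(J(f))$ as a holomorphic motion over $M$. Hence $J(f_\lambda)$ moves holomorphically on all of $M$ and Theorem~\ref{thm:DS} produces the distortion map $\A\colon M\times\overline M\to\ell^\infty$ tracking the distortion sequence of $f_\lambda$ at $p(\lambda):=\varphi_\lambda(p)$.

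The remaining and most delicate step is to show that $\A$ is non-constant. Because the templates $\mu^{(k)}$ are $2\pi i$-periodic, $\mu_\lambda$ descends through $\exp$ to a quasiconformal deformation of $g$, so every $f_\lambda$ remains of Herman type; the corresponding $g_\lambda$ has an attracting fixed point $z_0(\lambda)$ of multiplier $\rho(\lambda)$, with $\rho(0)=\rho$. The classification in Theorem~\ref{thmout:befrs}, combined with Koenigs linearisation of $g_\lambda$ at $z_0(\lambda)$, implies that the distortion sequence of $f_\lambda$ at $p(\lambda)$ encodes $\rho(\lambda)$ in its limiting behaviour, so it suffices to verify $\rho(\lambda)\not\equiv\rho$. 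On a one-parameter slice $\lambda=t\,\mathbf{e}_{k_0}$ this reduces to a standard first-variation computation: $\partial_t\rho(0)$ is the pairing of $\mu^{(k_0)}$ against the quadratic differential dual to the Koenigs coordinate at $z_0$, and an explicit choice of $\mu^{(k_0)}$ makes it non-zero. The hard part is this last calculation -- tracking how a localised Beltrami perturbation inside $\widetilde{\mathcal C}$ propagates through the $f$-invariant extension, the $\exp$-descent, and the Ahlfors--Bers integral before contributing to $\rho(\lambda)$ -- but once the infinitesimal formula is in place, non-constancy of $\A$ follows by the identity principle.
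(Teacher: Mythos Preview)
Your overall strategy---build an $f$-invariant Beltrami coefficient depending holomorphically on $\lambda\in M$, integrate, and detect the deformation through the multiplier of the downstairs map $g$---is reasonable and would prove Theorem~\ref{thm:Herman} as stated. But the construction as written has a real gap, and the architecture differs substantially from the paper's.

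\textbf{The gap.} Your claim that $\mu_\lambda$ descends through $\exp$ requires $\mu_\lambda$ to be $2\pi i$-periodic, and your construction does not deliver this. You place the disks $B_k$ inside a single bounded lift $\widetilde{\mathcal C}\subset U$ and then spread by the $f$-grand-orbit. But the $2\pi i$-translates of $U$ lie in \emph{other} grand orbits of $f$: since $f(z+2\pi i)=f(z)+2\pi i n$, the forward orbit of $U+2\pi i$ is $\{U_k+2\pi i\,n^k\}_{k\ge 0}$, which is generically disjoint from $\{U_k\}$. Hence your $f$-invariant $\mu_\lambda$ vanishes on the translates $\widetilde{\mathcal C}+2\pi i m$, is not periodic, and does not push down to $\Cx^*$; the assertion ``choose the whole collection to be invariant under the $2\pi i$-translation symmetry'' cannot be met by disks sitting in the bounded set $\widetilde{\mathcal C}$. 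The fix is to build the $g$-invariant Beltrami coefficient on $\Cx^*$ first (supported in the fundamental annulus $\mathcal C\subset V$) and pull back by $\exp$; then periodicity and $f$-invariance come for free. Once repaired, choosing $p$ to be the lift of the fixed point makes the distortion sequence of $f_\lambda$ at $p(\lambda)$ equal to the constant sequence $(\rho(\lambda),\rho(\lambda),\ldots)$, and a single template $\mu^{(1)}$ with nonzero pairing against the multiplier's cotangent direction already makes $\rho$---hence $\A$---non-constant. Note that your countable family $\{B_k\}$ then plays no role: the deformation is effectively one-dimensional, and the $\ell^\infty$ parameter is mostly idle.

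\textbf{Comparison with the paper.} The paper organises the surgery very differently and proves more. Instead of packing all templates into one fundamental domain and descending to $g$, it places the $n$-th perturbation (governed by $\lambda_n$) in an annulus $A_n\subset U_{n-1}$---a \emph{different} wandering domain for each $n$. In Koenigs coordinates, the linear step $w\mapsto\alpha w$ from $U_{n-1}$ to $U_n$ is replaced by $w\mapsto(\alpha+\rho\lambda_n)w$ via an explicit interpolation (Lemma~\ref{lem:interpol}); the resulting $f_\lambda$ is \emph{not} Herman type for generic $\lambda$, but now each entry $\alpha_n$ of the distortion sequence responds primarily to its own coordinate $\lambda_n$. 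The paper then computes $\partial\alpha_n/\partial\lambda_n(0)\neq 0$ directly from the Ahlfors--Bers variational formula (Lemma~\ref{lem:frechet} and the appendix calculations), obtaining the stronger coordinate-wise non-degeneracy of Theorem~\ref{thm:Hermanlong}. Your route, once the periodicity issue is handled, gives non-constancy with far less computation but without this finer conclusion, and without genuinely using the infinite-dimensionality of $M$.
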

\par
The outline of the paper is as follows. In Section \ref{sec:prelim}, we go through necessary concepts and results on the internal dynamics of wandering domains, holomorphic functions in Banach spaces, Banach analytic manifolds and conjugations, and quasiconformal maps and Teichm\"uller theory. Then, in Section \ref{sec:families}, we introduce existing results and concepts about parameter families of holomorphic functions and prove Theorem \ref{thm:unf}. Section \ref{sec:DS} introduces the notion of distortion sequences and proves Theorem \ref{thm:DS}, and finally in Section \ref{sec:Herman} we prove Theorem \ref{thm:Herman}. We notice that the results in Sections \ref{sec:DS} and \ref{sec:Herman} do not depend on the proof of Theorem \ref{thm:unf}; we only prove it first in order to keep similar sections together.

\par\noindent\textsc{Acknowledgements.} The first author thanks N\'uria Fagella and Lasse Rempe for many illuminating discussions. We thank the referee for pointing out mistakes in a previous version of Theorem \ref{thm:unf} and suggesting how to show that $\Phi$ is a covering, and for many other helpful comments that helped us vastly improve this paper. The first author acknowledges financial support from the London Mathematical Society in the form of the Early Career Fellowship ECF-2022-16, and from the Spanish State Research Agency through the Mar\'ia de Maeztu Program for Centers and Units of Excellence in R\&D (CEX2020-001084-M).

\section{Preliminaries}\label{sec:prelim}
\subsection{Internal dynamics of simply connected wandering domains}
Let $f$ be an entire function with a simply connected wandering domain $U$. For any pair of distinct points $z$ and $w$ in $U$, the sequence $\left(d_{U_n}(f^n(z), f^n(w))\right)$ of hyperbolic distances (see \cite{BM06} for an introduction to the hyperbolic metric of plane domains and its properties) is, by the Schwarz--Pick Lemma \cite[Theorem 6.4]{BM06}, decreasing, and hence has a limit $c(z, w)$ which is either zero or positive. It was shown by Benini \textit{et al.} in \cite{BEFRS19} that this property is (mostly) independent of $z$ and $w$, and so is the question whether this limit is reached. More specifically:
\begin{theorem}[Benini \textit{et al.} \cite{BEFRS19}]\label{thmout:befrs}
Let $f$ be an entire function with a simply connected wandering domain $U$. Let $Z_0\in U$, and let $E = \{(z, w)\in U\times U : \text{$f^k(z) = f^k(w)$ for some $k\in\N$}\}$. Then, exactly one of the following holds.
\begin{enumerate}[(1)]
    \item $d_{U_n}\left(f^n(z), f^n(w)\right)\to 0$ for all $z, w\in U$, and we say that $U$ is \emph{contracting}.
    \item $d_{U_n}\left(f^n(z), f^n(w)\right)\to c(z, w) >  0$ and $d_{U_n}\left(f^n(z), f^n(w)\right) \neq c(z, w)$ for all $(z, w)\in U\times U\setminus E$, and we say that $U$ is \emph{semi-contracting}.
    \item There exists $N\in\N$ such that, for all $n\geq N$, $d_{U_n}\left(f^n(z), f^n(w)\right) = c(z, w) > 0$ for all $(z, w)\in U\times U\setminus E$, and we say that $U$ is \emph{eventually isometric}.
\end{enumerate}
Furthermore, for any $z\in U$ and $n\in\N$, let
\[ \alpha_n(z) := \|Df(f^{n-1}(z))\|_{U_{n-1}}^{U_n} \]
denote the hyperbolic distortion of $f$ at $f^{n-1}(z)$. Then:
\begin{itemize}
    \item $U$ is contracting if and only if $\sum_{n\geq 1}(1 - \alpha_n(z)) = +\infty$.
    \item $U$ is eventually isometric if and only if $\alpha_n(z) = 1$ for all sufficiently large $n$.
\end{itemize}
\end{theorem}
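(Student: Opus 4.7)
The plan is to uniformize each $U_n$ by a Riemann map $\phi_n\colon\D\to U_n$ (which exists since $U_n$ is simply connected and hyperbolic) and to reduce the dynamics to the composition sequence $g_n:=\phi_{n+1}^{-1}\circ f|_{U_n}\circ\phi_n\colon\D\to\D$. By conformal invariance of the hyperbolic metric, $h_n(z,w):=d_{U_n}(f^n(z),f^n(w))$ equals $d_\D(\tilde z_n,\tilde w_n)$, where $\tilde z_n:=(g_{n-1}\circ\cdots\circ g_0)(\phi_0^{-1}(z))$, and $\alpha_n(z)$ equals the hyperbolic derivative of $g_{n-1}$ at $\tilde z_{n-1}$. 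The Schwarz--Pick lemma yields a clean dichotomy at each step: either $g_n$ is a M\"obius automorphism of $\D$ (a hyperbolic isometry, forcing $\alpha_{n+1}\equiv 1$), or $g_n$ strictly contracts hyperbolic distances with a quantitative rate on any compact subset of $\D$.

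Given the dichotomy, $h_n(z,w)$ is non-increasing and converges to some $c(z,w)\geq 0$. For the trichotomy, case (3), \emph{eventually isometric}, is immediate: if $g_n$ is an automorphism for all $n\geq N$, then $h_n\equiv h_N$. Otherwise infinitely many $g_n$ are strict contractions. Fix a reference pair $(z_0,w_0)\notin E$; since $h_n$ is bounded above by $h_0$, the iterates $\tilde z_n,\tilde w_n$ remain in a common compact subset of $\D$. The quantitative Schwarz--Pick estimate on that compact set lets one propagate ``$c=0$'' from one pair to any other by integrating the contraction factor along the moving hyperbolic geodesic joining $\tilde z_n$ to $\tilde w_n$. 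Hence contracting vs.~non-contracting is independent of the chosen pair, and the non-contracting, non-eventually-isometric remainder is exactly case (2), \emph{semi-contracting} --- here $h_n\downarrow c(z,w)$ strictly, since $h_N(z,w)=c(z,w)$ for some $N$ would force every subsequent $g_n$ to be an isometry along the orbit, throwing us back into case (3).

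For the $\alpha_n$-characterizations, the chain rule for hyperbolic derivatives gives $\prod_{k=1}^n\alpha_k(z)=\|D(g_{n-1}\circ\cdots\circ g_0)(\tilde z_0)\|_\D^\D$. Elementary infinite product theory then gives $\prod_k\alpha_k(z)=0$ if and only if $\sum_k(1-\alpha_k(z))=+\infty$. In the first case, the compositions $G_n:=g_{n-1}\circ\cdots\circ g_0$ become arbitrarily contractive at $\tilde z_0$; integrating along geodesics in a small hyperbolic ball yields $h_n(z,w)\to 0$ there, and the spreading argument from the previous paragraph extends this to all of $U$, placing us in case (1). In the second case, the $G_n$ form a normal family with a non-constant limit, so $c(z,w)>0$ somewhere and hence everywhere outside $E$. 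Finally, the eventually-isometric characterization $\alpha_n(z)=1$ for all large $n$ is immediate from the Schwarz--Pick dichotomy.

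The main obstacle is the uniformity of the classification: showing that the trichotomy is independent of $(z,w)\notin E$ and that the $\alpha_n$-characterizations do not depend on the base point $z$. Both rest on quantitative Schwarz--Pick estimates on compacta of $\D$, together with careful control over the location in $\D$ of the moving hyperbolic geodesics connecting $\tilde z_n$ to $\tilde w_n$; this bookkeeping is the delicate technical core of the argument.
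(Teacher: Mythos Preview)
The paper does not prove this theorem; it is stated in the preliminaries as a result quoted from Benini, Evdoridou, Fagella, Rippon, and Stallard \cite{BEFRS19}, with no proof given. So there is no ``paper's own proof'' to compare your proposal against.

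That said, your outline follows the same strategy as the original proof in \cite{BEFRS19}: uniformize each $U_n$ by a Riemann map, pass to the sequence of inner functions $g_n\colon\D\to\D$, and exploit the Schwarz--Pick dichotomy (strict contraction versus M\"obius automorphism) together with quantitative contraction estimates on compacta. You also correctly identify the technical heart of the argument, namely the uniformity of the trichotomy across all pairs $(z,w)$.

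One point you glossed over deserves explicit mention: when you assert that ``the iterates $\tilde z_n,\tilde w_n$ remain in a common compact subset of $\D$'', the bound $h_n\le h_0$ only controls the hyperbolic distance \emph{between} $\tilde z_n$ and $\tilde w_n$, not their location in $\D$. To pin them down one must normalize the Riemann maps, e.g.\ by choosing $\phi_n(0)=f^n(z_0)$ for a fixed base point $z_0$, so that $\tilde z_{0,n}\equiv 0$ and every other orbit stays within a fixed hyperbolic ball about the origin. This normalization is implicit in your setup but should be stated, since the quantitative Schwarz--Pick estimates you invoke depend on it. Similarly, the passage from $\prod_k\alpha_k(z)=0$ to $h_n\to 0$ requires a lemma linking the hyperbolic derivative at a point to the contraction rate of nearby hyperbolic distances; in \cite{BEFRS19} this is made precise via an explicit inequality (their hyperbolic distortion estimate), which you allude to but do not state. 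With those two ingredients supplied, your sketch is sound and matches the original argument.
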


The sequence $(\alpha_n(z))_{n\in\N}$ is a distortion sequence for $f$ at $z$ (see Section \ref{sec:DS}) with the property of being positive -- in particular, the dynamics of $U$ are intimately related to its distortion sequences.

\subsection{Holomorphic functions on Banach spaces}
The theory of holomorphic functions on Banach spaces and manifolds is rich, and shows both similarities and differences to complex analysis on $\Cx$ (or even $\Cx^n$); we refer the reader to \cite{Muj86} for an overview of the subject, and particularly for proofs of the results given here. The first and most important result we will state is Hartogs' separate analyticity\footnote{Unless otherwise specified, we always use analyticity to mean complex analyticity. As such, we use the words analytic and holomorphic interchangeably.} theorem \cite[Theorem 36.1]{Muj86}:
\begin{theorem}[Hartogs Theorem]\label{lem:hartogs}
Let $X$, $Y$, and $Z$ be complex Banach spaces, and let $U\subset X$ and $V\subset Y$ be open sets. Then, a function $f:U\times V\to Z$ is holomorphic if and only if, for every $x_0\in U$ and $y_0\in V$, the functions $y\mapsto f(x_0, y)$ and $x\mapsto f(x, y_0)$ are holomorphic.
\end{theorem}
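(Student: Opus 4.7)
The plan is to prove the nontrivial direction, namely that separate holomorphy of $f\colon U\times V\to Z$ implies joint holomorphy; the converse is immediate from composing $f$ with the affine holomorphic inclusions $x\mapsto(x,y_0)$ and $y\mapsto(x_0,y)$. The strategy splits into two stages. First, I would upgrade separate holomorphy to Gâteaux holomorphy (holomorphy along every complex line in $U\times V$) by reducing to the classical finite-dimensional Hartogs theorem on $\Cx^2$. Second, I would promote Gâteaux holomorphy to full Fréchet holomorphy by establishing local boundedness, which is where the real work of the proof lies.

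For the first stage, fix $(x_0,y_0)\in U\times V$ and directions $(a,b)\in X\times Y$, and consider
\[ F(s,t) := f(x_0 + s a, \, y_0 + t b) \]
on a small bidisc in $\Cx^2$. By hypothesis $F$ is separately holomorphic in each of $s$ and $t$; composing with arbitrary continuous linear functionals $\phi\in Z'$ reduces to the scalar case, to which the classical $\Cx^2$-Hartogs theorem applies and shows that $\phi\circ F$ is jointly holomorphic. Invoking Dunford's theorem (weak holomorphy equals holomorphy for Banach-valued maps), $F$ itself is jointly holomorphic, and restricting to the diagonal $s=t$ shows that $f$ is holomorphic along every complex line through $(x_0,y_0)$.

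For the second stage, I would establish local boundedness of $f$ via a Baire category argument. Choose $r > 0$ with $\overline{B_X(x_0,2r)}\subset U$ and $\overline{B_Y(y_0,2r)}\subset V$, and define
\[ A_n := \bigl\{ y\in \overline{B_Y(y_0,r)} : \|f(x,y)\|_Z \leq n \text{ for all } x\in \overline{B_X(x_0,r)} \bigr\}. \]
Each $A_n$ is closed (the slice $y\mapsto f(x,y)$ being continuous for every fixed $x$), and once one verifies that $\bigcup_n A_n$ covers $\overline{B_Y(y_0,r)}$, the Baire category theorem gives some $A_N$ with nonempty interior, yielding joint local boundedness of $f$. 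A standard Cauchy-estimate argument then assembles the one-variable Taylor expansions along coordinate directions into a convergent joint Taylor series, concluding that $f$ is Fréchet holomorphic on a neighbourhood of $(x_0,y_0)$.

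The main obstacle is the coverage step in the Baire argument: one needs every slice $x\mapsto f(x,y)$ to be bounded on the whole closed ball $\overline{B_X(x_0,r)}$, which in infinite-dimensional $X$ does not follow from holomorphy on $U$ alone, since closed balls are not compact. Surmounting this typically requires an auxiliary iterative Baire argument that first extracts pointwise local boundedness from the Taylor-series representation of a Banach-valued holomorphic function on each slice, and then upgrades these pointwise bounds to uniform bounds by restricting to finite-dimensional affine subspaces where compactness is available. This is the step that genuinely distinguishes the Banach-space Hartogs theorem from its classical $\Cx^2$ ancestor.
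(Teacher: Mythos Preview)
The paper does not prove this theorem: it is quoted as a background result with an explicit reference to \cite[Theorem 36.1]{Muj86}, and no argument is supplied. There is therefore no ``paper's own proof'' to compare your attempt against.

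That said, your outline follows the standard route and is sound as a roadmap. In Stage~1 a small cleanup: it is tidier to restrict to the diagonal $s=t$ \emph{before} invoking Dunford's theorem, so that you only need the one-variable weak/strong equivalence rather than a two-variable version (which, if argued directly, threatens to become circular with the very result you are proving). In Stage~2 you have correctly located the genuine difficulty: the naive Baire argument breaks down because closed balls in an infinite-dimensional $X$ are non-compact, so there is no \emph{a priori} reason the slices $x\mapsto f(x,y)$ are bounded on $\overline{B_X(x_0,r)}$. Your proposed repair---expanding each slice as $f(x,y)=\sum_m P_m^y(x-x_0)$ with continuous $m$-homogeneous polynomials $P_m^y$, and running a second Baire argument on the sets where the norms $\|P_m^y\|$ are controlled---is indeed how the gap is closed in the literature. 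As written, however, you stop at the diagnosis and do not carry out this auxiliary argument, so the proposal is an honest plan rather than a complete proof; the missing details are precisely the content of Mujica's Theorem~36.1.
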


Next, we have a very useful condition for analyticity of functions into $\ell^\infty$; see \cite[Exercise 8.H]{Muj86}.
\begin{lemma}\label{lem:linfty}
Let $X$ be a complex Banach space, let $U\subset X$ be open, and let $f_n:U\to \Cx$ be holomorphic functions. Suppose that, for each compact subset $K\subset U$, there exists $M_K > 0$ such that $\sup_{z\in K} |f_n(z)| < M_K$ for all $n\in\N$. Then, the function $F:U\to\ell^\infty$ given by
\[ F(z) = \left(f_n(z)\right)_{n\in\N} \]
is holomorphic.
\end{lemma}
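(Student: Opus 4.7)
The plan is to prove that $F \colon U \to \ell^\infty$ is holomorphic by verifying the two conditions that characterise holomorphy for Banach-space-valued maps as in \cite{Muj86}: first that $F$ is locally bounded, and second that $F$ admits a locally convergent expansion in continuous homogeneous polynomials at every point of $U$.

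The first step is to upgrade the compact-set bound of the hypothesis into local boundedness of $z \mapsto \|F(z)\|_\infty$. I would argue by contradiction: if $F$ is unbounded on every neighbourhood of some $z_0 \in U$, there exists a sequence $z_k \to z_0$ in $U$ with $\|F(z_k)\|_\infty = \sup_n |f_n(z_k)| \to \infty$. But $K := \{z_0\} \cup \{z_k : k \in \N\}$ is compact (a convergent sequence together with its limit), so the hypothesis provides $M_K$ with $|f_n(z)| < M_K$ for all $z \in K$ and all $n$, contradicting $\|F(z_k)\|_\infty \to \infty$.

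Once local boundedness is established, the second step is routine. Fix $z_0 \in U$ and choose $r > 0$ so that the open ball $B(z_0, r)$ is contained in $U$ and $\|F\|_\infty \leq M$ on it. Each $f_n$ expands as $f_n(z_0 + v) = \sum_{k \geq 0} P_{n,k}(v)$ in continuous $k$-homogeneous polynomials $P_{n,k}$, and Cauchy's estimates applied along the complex lines $t \mapsto z_0 + tv$ (which lie in $B(z_0, r)$ for $|t|\|v\|<r$) yield the uniform bound $|P_{n,k}(v)| \leq M(\|v\|/r)^k$. Hence $Q_k(v) := (P_{n,k}(v))_{n \in \N}$ takes values in $\ell^\infty$, and polarising each $P_{n,k}$ together with the standard bound on the associated symmetric $k$-linear form shows that $Q_k$ defines a continuous $k$-homogeneous polynomial $X \to \ell^\infty$ with polynomial norm at most $M/r^k$. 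For $\|v\| < r$ the series $\sum_k Q_k(v)$ converges absolutely in $\ell^\infty$, dominated by a geometric series, and its $n$-th coordinate equals $\sum_k P_{n,k}(v) = f_n(z_0 + v)$, so the sum equals $F(z_0+v)$. This is the desired Taylor expansion of $F$ at $z_0$.

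The only real obstacle is Step 1: in an infinite-dimensional Banach space, neighbourhoods of a point are not compact, so the hypothesis does not obviously imply local boundedness. The bridge is the elementary observation that a convergent sequence together with its limit is compact, which funnels the local-boundedness question back to the given compact-set bound. After that, the argument is a standard Cauchy-estimate plus geometric-series calculation, and no feature of $\ell^\infty$ beyond the fact that coordinate evaluations separate points is used.
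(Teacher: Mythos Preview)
Your argument is correct. The paper does not actually supply a proof of this lemma; it simply cites it as \cite[Exercise 8.H]{Muj86}, so there is no in-paper proof to compare against. Your write-up is a clean solution to that exercise: the key observation in Step~1---that a convergent sequence together with its limit is compact, so the compact-set hypothesis forces local boundedness of $\|F\|_\infty$ even though balls in $X$ need not be compact---is exactly the point of the exercise, and Step~2 is the standard Cauchy-estimate assembly of the coordinate Taylor series into an $\ell^\infty$-valued one. One could shorten Step~2 slightly by invoking the equivalence ``G-holomorphic $+$ locally bounded $\Leftrightarrow$ holomorphic'' (Mujica, Theorem~14.9/Theorem~36.5) together with the fact that weak holomorphy with respect to a separating family of functionals plus local boundedness implies holomorphy, but your direct power-series construction is equally valid and arguably more self-contained.
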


It is important to notice that Lemma \ref{lem:linfty} does not hold if we only assume real-analyticity. Indeed, let $f_n\colon[0, 1]\to \Cx$ be given by $f_n(x) = \exp(2n\pi i x)$. Then, clearly, the sequence $(f_n)_{n\in\mathbb{N}}$ is uniformly bounded; however, the function $F\colon[0, 1]\to\ell^\infty$ defined by $F(x) = (f_n(x))_{n\in\mathbb{N}}$ is not even continuous, let alone real-analytic.

Since analyticity is a local property, it follows immediately that all the results discussed in this section are also valid for analytic functions between Banach analytic manifolds.

\subsection{Banach analytic manifolds and conjugations}\label{ssec:conj}
A Banach analytic manifold is, in a sense, the ``obvious'' way to define an infinite-dimensional complex manifold. More precisely, it is a Hausdorff topological space $M$ endowed with an open cover $\{U_\alpha\subset M\}_{\alpha\in A}$ and homeomorphisms $\varphi_\alpha:U_\alpha\to\varphi_\alpha(U_\alpha)\subset V$, where $V$ is a complex Banach space, and such that the transition maps $\varphi_\beta\circ\varphi_\alpha^{-1}:\varphi_\alpha(U_\alpha\cap U_\beta)\to \varphi_\beta(U_\alpha\cap U_\beta)$ are biholomorphic for any $\alpha, \beta\in A$. The functions $\varphi_\alpha$ are called the \textit{charts} of $M$.

For some $v = (z_1, \ldots, z_n)\in \Cx^n$, the meaning of $\overline v$ is relatively immediate: it is $(\overline{z_1}, \ldots, \overline{z_n})$. In a more abstract Banach analytic manifold, however, the meaning of conjugation becomes less clear. To understand it, we must first consider what it means to conjugate elements of complex Banach spaces.

If $V$ is a complex Banach space, we define a \textit{conjugation} to be an anti-linear map $\sigma:V\to V$ such that $\sigma\circ\sigma(v) = v$ for all $v\in V$. It is not obvious that every complex Banach space $V$ has a conjugation; however, one can always be constructed by dividing $V$ into ``real'' and ``imaginary'' parts\footnote{This is called a \textit{real structure} on $V$.}. More specifically, take a Hamel basis for $V$, and consider its $\mathbb{R}$-span $V_\mathbb{R}$. It is easy to verify that $V = V_\mathbb{R}\oplus iV_\mathbb{R}$; thus, any $v\in V$ can be written as $v = x + iy$, where $x$ and $y$ are in $V_\mathbb{R}$, and a well-defined conjugation is given by $\sigma(x + iy) = x - iy$. Taking different Hamel bases gives us different conjugations on $V$ (for instance, we can start with any $z\in\Cx$ with $|z| = 1$ and obtain a conjugation on $\Cx$ by reflecting across the $\mathbb{R}$-span of $z$).

On a Banach analytic manifold $M$, we conjugate by changing the complex structure of $M$. In other words, if $\{\psi_\alpha:U_\alpha\subset M\to V\}_{\alpha\in A}$ are charts for $M$, then we define $\overline{M}$ as the manifold with charts $\{\sigma\circ\psi_\alpha:U_\alpha\to V\}_{\alpha\in A}$, where $\sigma:V\to V$ is a conjugation. Notice that the identity is an anti-holomorphic map from $M$ to $\overline{M}$.

\subsection{Quasiconformal maps and Teichm\"uller theory}\label{ssec:teichmuller}
As usual, one of our main tools will be the measurable Riemann mapping theorem of Ahlfors and Bers \cite{AB60}; see \cite[Theorem 4.6.1 and Proposition 4.7.5]{Hub06} and \cite[Theorem 4.4.1]{FM} for the version given here. Here and throughout, for a quasiregular map $\varphi$, we denote by $\varphi^*\mu$ the pullback of the almost complex structure $\mu$ by $\varphi$, and by $\mu_0$ the standard almost complex structure. Also here and throughout, we denote by $M(X)$ the unit ball in the Banach space $L^\infty(X)$ of measurable, a.e. bounded functions on $X\subset\Cx$.
\begin{theorem}\label{lem:MRMT}
    Let $\mu\in M(\Cx)$. Then, the following hold:
\begin{enumerate}[(1)]
    \item There exists a quasiconformal map $\varphi:\Cx\to\Cx$ such that $\varphi$ has Beltrami coefficient $\mu$, i.e., $\varphi^*\mu_0 = \mu$. Furthermore, $\varphi$ is unique up to postcomposition with an automorphism of $\Cx$.
    \item Let $\varphi^\mu$ denote the quasiconformal homeomorphism of $\Cx$ such that $\varphi^*\mu_0 = \mu$ normalised by $\varphi^\mu(0) = 0$, $\varphi^\mu(1) = 1$, and $\varphi^\mu(\infty) = \infty$. Then, the map $M(\Cx)\mapsto QC(\Cx, \Cx)$ given by $\mu\mapsto \varphi^\mu$ is analytic, in the sense that it is continuous relative to the compact-open topology and for each $z\in\Cx$ the map $\mu\mapsto\varphi^\mu(z)$ is analytic.
\end{enumerate}
\noindent Item (1) holds with $\Cx$ replace by $\D$. Item (2) holds with $\Cx$ replaced by $\D$ and analytic replaced by real-analytic.
\end{theorem}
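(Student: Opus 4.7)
The plan is to establish existence via a Neumann-series solution to the Beltrami equation, deduce uniqueness from Weyl's lemma, and then read off analytic dependence directly from that series. I will begin by assuming $\mu$ has compact support. Seeking $\varphi$ in the form $\varphi(z)=z+w(z)$ with $w\to 0$ at infinity turns $\partial_{\bar z}\varphi=\mu\,\partial_z\varphi$ into $\partial_{\bar z}w=\mu(1+\partial_z w)$; setting $h=\partial_z w$ and applying the Beurling--Ahlfors transform $T$ (which intertwines $\partial_z$ and $\partial_{\bar z}$ on functions decaying at infinity) yields the fixed-point equation $h=T\mu+T(\mu h)$.

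The decisive analytic input is that $T$ is an isometry on $L^2$, and by Calder\'on--Zygmund interpolation $\|T\|_{L^p\to L^p}\to 1$ as $p\to 2$. For $\mu\in M(\Cx)$ one can therefore pick $p>2$ with $\|T\|_{L^p}\cdot\|\mu\|_\infty<1$, so $h\mapsto T(\mu h)$ is a strict contraction on $L^p$ and the Neumann series
\[ h \;=\; T\mu + T(\mu T\mu) + T(\mu T(\mu T\mu)) + \cdots \]
converges in $L^p$. I would then recover $w$ from $h$ via the Cauchy transform $C$ and use Sobolev embedding to obtain H\"older continuity of $w$, after which a degree/argument-principle argument shows that $\varphi=z+w$ is a genuine homeomorphism (rather than just quasiregular) with Beltrami coefficient $\mu$. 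The compact-support hypothesis is then removed by truncating $\mu$ to $\{|z|\le R\}$, solving for each $R$ with a fixed normalisation, and passing to the locally uniform limit. Uniqueness up to post-composition with automorphisms of $\Cx$ follows from Weyl's lemma: if two solutions share the same Beltrami coefficient, their composition is $1$-quasiconformal, hence extends to a M\"obius transformation of $\Chat$.

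For part (2), the Neumann series exhibits $\varphi^\mu(z)-z$ explicitly as a sum of $n$-homogeneous polynomial expressions in $\mu$, each obtained by iterating the bounded linear operators $T$ and $C$ with multiplication by $\mu$. This series converges uniformly on a complex neighbourhood of any $\mu_0\in M(\Cx)$ taken inside $L^\infty(\Cx)$, so for every fixed $z\in\Cx$ the evaluation $\mu\mapsto\varphi^\mu(z)$ is locally a convergent power series in $\mu$ and is therefore complex-analytic. The $\{0,1,\infty\}$-normalisation is enforced by dividing through by an affine factor depending analytically on $\mu$, and continuity in the compact-open topology follows from the uniform H\"older estimates on compact sets.

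Finally, for the disc version I will extend $\mu\in M(\D)$ to $\tilde\mu\in M(\Cx)$ by reflection across $\partial\D$, so that the resulting $\varphi^{\tilde\mu}$ commutes with $z\mapsto 1/\bar z$ and hence preserves $\D$; the restriction of $\varphi^{\tilde\mu}$ to $\D$ is then the desired solution. The reflection $\mu\mapsto\tilde\mu$ is however \emph{anti}-holomorphic in $\mu$, so composing it with the complex-analytic dependence on $\Cx$ yields only real-analytic dependence in the disc case. The main obstacle throughout is the quantitative $L^p$-boundedness of $T$ near $p=2$ together with the verification that the fixed-point solution is honestly a homeomorphism; once that machinery is in place, the analytic dependence in part (2) is essentially a formal consequence of the series.
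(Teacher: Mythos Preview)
The paper does not prove this theorem: it is quoted as the measurable Riemann mapping theorem of Ahlfors and Bers, with references to \cite{AB60}, \cite[Theorem~4.6.1 and Proposition~4.7.5]{Hub06}, and \cite[Theorem~4.4.1]{FM}. Your sketch is precisely the classical Ahlfors--Bers argument contained in those references (Neumann series for the Beurling--Ahlfors transform, recovery via the Cauchy transform, analytic dependence read off from the series, and the reflection trick for the disc), so there is nothing to compare and your outline is correct.
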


Notice that Theorem \ref{lem:MRMT}(2) is not the usual statement of the integrating map's holomorphic dependence on parameters. Usually, one considers the Beltrami coefficient $\mu$ to depend holomorphically on some parameter $\lambda$ moving in some complex manifold; here, however, the Beltrami coefficient $\mu$ itself is considered as a variable moving in the Banach analytic manifold $M(\Cx)$.

The analytic dependence of mappings on Beltrami coefficients has something of a converse \cite[Lemma 4.8.15]{Hub06}:
\begin{lemma}\label{lem:MRMTconv}
Let $M$ be a Banach analytic manifold, $U$ an open subset of $\Cx$, and $F:M\times U\to\Cx$ a continuous map. Let $h_t(z) := H(t, z)$. Suppose that $h_t$ is quasiconformal for all $t\in M$, and that for every $z\in U$ the map $t\mapsto h_t(z)$ is analytic. Then, the Beltrami coefficient
\[ \mu_t := (h_t)^*\mu_0 \]
is analytic as a map $M\mapsto L^\infty(\Cx)$.
\end{lemma}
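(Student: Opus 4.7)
The plan is to check $L^\infty(\Cx)$-analyticity of $t \mapsto \mu_t$ near every $t_0 \in M$. By restricting to a chart and invoking the standard Gâteaux criterion for Banach-valued holomorphy (together with Theorem \ref{lem:hartogs}), it is enough to treat the case where $M$ is a one-dimensional complex disc centred at $t_0$. The strategy is to absorb $h_{t_0}$, interpret the resulting one-parameter perturbation as a holomorphic motion, invoke Bers--Royden to obtain a Beltrami coefficient depending analytically on $t$ in $L^\infty$, and finally recover $\mu_t$ via the composition law for Beltrami coefficients.

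First I set $g_t := h_t \circ h_{t_0}^{-1}$. The continuity of $H$ and the pointwise holomorphy in $t$ make $(t,w)\mapsto g_t(w)$ a holomorphic motion of $h_{t_0}(U)$ based at $t_0$, normalised so that $g_{t_0}=\mathrm{id}$. The Mañé--Sad--Sullivan $\lambda$-lemma extends this to a holomorphic motion of $\Chat$ over a possibly smaller disc, and Bers--Royden upgrades the extension to a canonical one $\tilde g_t$ whose Beltrami coefficient $\nu_t := \mu_{\tilde g_t}$ depends $L^\infty(\Cx)$-analytically on $t$, with $\nu_{t_0}=0$ and $\|\nu_t\|_\infty\to 0$ as $t\to t_0$. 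In particular $\|\nu_t\|_\infty$ stays uniformly below $1$ on a neighbourhood of $t_0$.

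Next I recover $\mu_t$. Since $h_t = \tilde g_t \circ h_{t_0}$ almost everywhere, the standard composition law for Beltrami coefficients expresses $\mu_t(z)$ as a Möbius-type function of $\nu_t(h_{t_0}(z))$ whose other coefficients depend only on $\mu_{h_{t_0}}$ and on a unimodular factor built from $\partial_z h_{t_0}$, all of which are independent of $t$. Pre-composition with the fixed quasiconformal map $h_{t_0}$ is a bounded operator $L^\infty(h_{t_0}(U)) \to L^\infty(U)$, so $t \mapsto \nu_t \circ h_{t_0}$ remains $L^\infty$-analytic; and the Möbius-type expression is itself an analytic map on the open unit ball of $L^\infty$ (its denominator is uniformly bounded away from zero because $\|\nu_t\|_\infty < 1$ and $\|\mu_{h_{t_0}}\|_\infty < 1$). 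Composing, $\mu_t$ is an $L^\infty(\Cx)$-analytic function of $t$ near $t_0$, which is what was to be shown.

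The heart of the argument is the Bers--Royden step: upgrading a bare holomorphic motion to an extension whose Beltrami coefficient varies analytically in $L^\infty$ is where the real work sits, and it is the place I expect to have to quote the deepest outside machinery. Once this input is in hand, the remaining work is essentially algebraic: one must only verify that pre-composition by the fixed quasiconformal map $h_{t_0}$ preserves $L^\infty$-analyticity, and that the arguments of the composition formula stay in the regime where the formula is well-defined and analytic, both of which follow cleanly from the uniform bound $\|\nu_t\|_\infty \to 0$ near $t_0$.
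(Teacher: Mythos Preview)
The paper does not prove this lemma itself; it is quoted from \cite[Lemma 4.8.15]{Hub06}. Your argument is correct in outline, but you should be alert to a potential circularity at the Bers--Royden step. As stated in this paper (Theorem~\ref{lem:lambda2}), Bers--Royden asserts only that the canonical extension $\tilde g_t$ is a holomorphic motion with harmonic Beltrami coefficient; it does not explicitly say that $t\mapsto\nu_t$ is $L^\infty$-analytic. Deducing the latter from the former is exactly the content of the lemma you are proving. To make the argument non-circular you must appeal to the actual construction in \cite{BR86}, where the extension is built via a holomorphic section of the Bers projection (through the Bers embedding and the Ahlfors--Weill local inverse), so that $L^\infty$-analyticity of $\nu_t$ is a direct output, established independently of the present lemma. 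Once that is granted, the remainder of your proof (composition formula for Beltrami coefficients, pre-composition by the fixed $h_{t_0}$ as a bounded linear operator on $L^\infty$, and the G\^ateaux-plus-local-boundedness reduction to one complex dimension) is correct. One small slip: the Ma\~n\'e--Sad--Sullivan $\lambda$-lemma extends a motion only to the \emph{closure} of the moving set; the extension to all of $\Chat$ is already part of Bers--Royden (or S\l odkowski), not of the $\lambda$-lemma.

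By contrast, the proof in \cite{Hub06} is more direct and avoids any dependency issue: local boundedness of $t\mapsto\mu_t$ in $L^\infty$ is automatic since $\|\mu_t\|_\infty<1$, and one checks weak analyticity by pairing against test functions and using that the distributional derivatives $\partial_z h_t$ and $\partial_{\bar z}h_t$ depend analytically on $t$; a locally bounded, weakly analytic map into a dual Banach space is then norm-analytic. Your route is legitimate and more structural, but it imports a deep theorem whose logical independence from the target lemma must be made explicit.
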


Families of functions $h_t$ as in Lemma \ref{lem:MRMTconv} bring us, finally, to the definition of a holomorphic motion:
\begin{definition}
Let $X\subset \Cx$. A \textit{holomorphic motion} of $X$ over a Banach analytic manifold $M$ with basepoint $\lambda_0\in M$ is a mapping $H:M\times X\to\Cx$ such that:
\begin{enumerate}
    \item For each $x\in X$, the map $\lambda\mapsto H(\lambda, x)$ is analytic;
    \item For each $\lambda\in M$, the map $x\mapsto H(\lambda, x)$ is injective; and
    \item $H(\lambda_0, x) = x$.
\end{enumerate}
\end{definition}

This definition was introduced by Ma\~n\'e, Sad, and Sullivan in \cite{MSS83}, who called them ``analytic families of injections'', and has since become a cornerstone of holomorphic dynamics. A major property of holomorphic motions is the following (see \cite[Theorem 5.2.3]{Hub06}):
\begin{lemma}\label{lem:lambda1}
Let $M$ be a Banach analytic manifold, and let $X\subset\Cx$. If $H:M\times\Cx\to\Cx$ is a holomorphic motion, then for every $\lambda\in M$ the map $X\ni x\mapsto H(\lambda, x)\in \Cx$ is quasiconformal.
\end{lemma}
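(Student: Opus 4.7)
The approach is to deduce this from the classical $\lambda$-lemma of Mañé, Sad, and Sullivan \cite{MSS83}, the proof of which adapts without essential change to the Banach analytic setting by composing with holomorphic discs. More precisely, assume without loss of generality that $M$ is connected. For any $\lambda\in M$, I would find a holomorphic disc (or a chain of discs) $\phi\colon\mathbb{D}\to M$ with $\phi(0)=\lambda_0$ and $\phi(r)=\lambda$ for some $r\in[0,1)$ -- possible because any two points of a connected Banach analytic manifold lie on a chain of holomorphic discs -- and consider the pullback motion $\widetilde{H}(t,z):=H(\phi(t),z)$. If $\widetilde{H}(r,\cdot)$ is quasiconformal, so is $H(\lambda,\cdot)$, so it suffices to prove the statement for $M=\mathbb{D}$.

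For the disc case, the key object is the cross-ratio. Given four distinct points $z_0,z_1,z_2,z_3\in\mathbb{C}$, define
\[ \rho(t)=\frac{(\widetilde{H}(t,z_0)-\widetilde{H}(t,z_2))(\widetilde{H}(t,z_1)-\widetilde{H}(t,z_3))}{(\widetilde{H}(t,z_0)-\widetilde{H}(t,z_3))(\widetilde{H}(t,z_1)-\widetilde{H}(t,z_2))}. \]
Analyticity of each $t\mapsto\widetilde{H}(t,z_i)$ and injectivity of each slice $\widetilde{H}(t,\cdot)$ make $\rho$ a holomorphic function from $\mathbb{D}$ into $\widehat{\mathbb{C}}\setminus\{0,1,\infty\}$. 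Applying the Schwarz--Pick inequality relative to the hyperbolic metric on the thrice-punctured sphere yields
\[ d_{\widehat{\mathbb{C}}\setminus\{0,1,\infty\}}\bigl(\rho(t),\rho(0)\bigr)\leq d_{\mathbb{D}}(t,0), \]
which is a uniform (independent of the four chosen points) control on cross-ratio distortion.

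From this cross-ratio estimate I would extract quasiconformality in two sub-steps: first, by taking $z_2$, $z_3$ to infinity and $z_1$ close to $z_0$, I would derive Hölder continuity of $\widetilde{H}(t,\cdot)$ with exponent $(1-|t|)/(1+|t|)$, which in particular shows each slice is a homeomorphism of $\mathbb{C}$; second, by comparing cross-ratios on small quadruples and invoking the equivalence between geometric (quasi-symmetric) and analytic definitions of quasiconformality, I would obtain that $\widetilde{H}(t,\cdot)$ is quasiconformal with dilatation $K\leq(1+|t|)/(1-|t|)$. The main obstacle is precisely this final upgrade from a scalar cross-ratio bound to a measurable bound on the Beltrami coefficient; it is the technical heart of the $\lambda$-lemma and requires some real-analytic care, but is entirely classical and is carried out in detail in \cite[Theorem 5.2.3]{Hub06}, which I would cite rather than reproduce.
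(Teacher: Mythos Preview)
Your proposal is correct and is essentially the standard proof of the $\lambda$-lemma. Note that the paper does not actually prove this lemma: it simply states it and refers the reader to \cite[Theorem 5.2.3]{Hub06}, which is precisely the reference you invoke for the technical upgrade from cross-ratio control to a bound on the Beltrami coefficient. So your sketch is not so much an alternative to the paper's proof as it is an expansion of the citation the paper gives; the reduction to $M=\D$ via holomorphic discs and the Schwarz--Pick estimate on cross ratios in $\Chat\setminus\{0,1,\infty\}$ are exactly the ingredients of the argument in \cite{Hub06} (and, originally, \cite{MSS83}).
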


A major reason for the popularity and usefulness of holomorphic motions is that, in many cases, much more can be said: a holomorphic motion of $X$ can be extended to a holomorphic motion of $\overline{X}$ (see \cite[Theorem 5.2.3]{Hub06}), and sometimes even to a holomorphic motion of $\Cx$. To understand when and how this is possible, we must take a slight detour.

Given a closed set $F\subset\Cx$, an important question is to consider the family of all possible holomorphic motions of $F$. It turns out that the answer is deep, and requires a foray into Teichm\"uller theory. Let us start with some definitions (properly tailored to our context):
\begin{definition}
Let $S\subset\Cx$ be open, and let $\psi:S\to\psi(S)\subset\Cx$ and $\varphi:S\to\varphi(S)\subset\Cx$ be quasiconformal maps. We say that $\psi$ and $\varphi$ are equivalent (or \textit{Teichm\"uller equivalent}) if there exists a conformal map $h:\psi(S)\to\varphi(S)$ such that the quasiconformal map $\eta = \varphi^{-1}\circ h\circ\psi:S\to S$ is isotopic to the identity relative to $\partial S$. In other words, $\eta$ extends continuously to $\partial S$ and there exists a continuous map $H\colon[0, 1]\times \overline{S}\to\overline{S}$ such that $H(0, z) = z$ for all $z\in S$, $H(1, z) = \eta(z)$ for all $z\in S$, and $H(t, z) = z$ for all $t\in[0, 1]$ and $z\in\partial S$.
\end{definition}
\begin{definition}
Let $S$ be a plane domain. Then, its \textit{Teichm\"uller space} $\mathcal{T}(S)$ is the set of all possible quasiconformal maps $\varphi:S\to\varphi(S)\subset\Chat$ modulo Teichm\"uller equivalence.
\end{definition}

The fact that $\mathcal{T}(S)$ is a complex manifold for any plane domain $S$ (or, more generally, for any Riemann surface; see e.g. \cite[Theorem 6.5.1]{Hub06}) is a powerful result with many applications, and so is the fact that this idea can be generalised to \textit{unions} of mutually disjoint plane domains (see \cite[Section 5.3]{MS98} or \cite[Section 5]{Mit00} for details on how to do this).

Now, let $F\subset\Chat$ be closed, and assume that $\{0, 1, \infty\}\subset F$. If $\psi, \varphi:\Cx\to\Cx$ are quasiconformal maps fixing $0$, $1$, and $\infty$, we say (in the spirit of the previous definitions) that $\psi$ and $\varphi$ are \emph{$F$-equivalent} if $\psi\circ\varphi^{-1}$  is isotopic to the identity relative to $F$ (in particular, $\varphi|_F = \psi|_F$). We define the \emph{Teichm\"uller space of $F$, denoted $T(F)$}, to be the set of $F$-equivalence classes of quasiconformal homeomorphisms of $\Chat$ fixing $0$, $1$, and $\infty$. We will see that the space $T(F)$ holds the answer to our problem of describing all possible holomorphic motions of $F$; let us take a closer look at it.

It follows from the Ahlfors--Bers theorem that the map $P:M(\Cx)\to T(F)$ assigning to $\mu$ the $F$-equivalence class of its unique normalised integrating map $\varphi^\mu$ is a well-defined function, but this by itself does not tell us much. Far more useful is the following characterisation of $T(F)$:
\begin{lemma}[\cite{Mit00}, Corollaries 5.3, 6.1, and 6.2]\label{lem:TE}
For any closed set $F\subset\Chat$ containing $0$, $1$, and $\infty$, the following hold.
\begin{enumerate}
    \item $T(F)\simeq \mathcal{T}(\Chat\setminus F)\times M(F)$, where (as above) $M(F)$ denotes the unit ball in $L^\infty(F)$.
    \item There exists $\pi:M(\Chat\setminus F)\to\mathcal{T}(\Chat\setminus F)$ such that the map $P_F:M(\Cx)\to\mathcal{T}(\Chat\setminus F)\times M(F)$ given by $P_F(\mu) = (\pi(\mu), \mu|_F)$ is a split submersion.
    \item The function $P_F$ satisfies $P_F(\mu) = P_F(\nu)$ if and only if $P(\mu) = P(\nu)$.
\end{enumerate}
In particular, $T(F)$ admits a complex structure that makes $P_F$ into a holomorphic split submersion.
\end{lemma}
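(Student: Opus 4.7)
The plan is to exhibit an explicit natural bijection $\Psi\colon T(F)\to\mathcal{T}(\Chat\setminus F)\times M(F)$, use the known Banach analytic structure on the product to transport a complex structure onto $T(F)$, and then deduce the split-submersion property of $P_F$ from that of the Bers projection for $\Chat\setminus F$.

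First, I would define $\Psi$ as follows. Given the $F$-equivalence class of a normalised quasiconformal map $\varphi\colon\Chat\to\Chat$, associate to it the pair $([\varphi|_{\Chat\setminus F}],\,(\varphi^*\mu_0)|_F)$, where the first coordinate is the point in the union-of-domains Teichm\"uller space $\mathcal{T}(\Chat\setminus F)$ represented by the restriction of $\varphi$ to $\Chat\setminus F$, and the second is the Beltrami coefficient of $\varphi$ restricted to $F$. To see that $\Psi$ is well-defined on $F$-equivalence classes, observe that if $\psi\circ\varphi^{-1}$ is isotopic to the identity relative to $F$, then in particular $\psi|_F=\varphi|_F$, so the Beltrami coefficients agree a.e.\ on $F$, and the isotopy simultaneously witnesses Teichm\"uller equivalence of the restrictions to $\Chat\setminus F$ (with the required conformal map $h$ being the identity after conjugation by the restrictions).

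Second, I would prove bijectivity. Surjectivity is immediate: given $([\eta],\beta)\in\mathcal{T}(\Chat\setminus F)\times M(F)$, define $\mu\in M(\Cx)$ to equal $\eta^*\mu_0$ on $\Chat\setminus F$ and $\beta$ on $F$; integrating by Theorem \ref{lem:MRMT} yields a preimage $[\varphi^\mu]$. For injectivity, suppose $\Psi([\varphi])=\Psi([\psi])$. The Teichm\"uller equivalence on $\Chat\setminus F$ provides a conformal homeomorphism $h\colon\varphi(\Chat\setminus F)\to\psi(\Chat\setminus F)$ together with an isotopy of $\psi^{-1}\circ h\circ\varphi$ to the identity rel $\partial(\Chat\setminus F)$, while the equality of Beltrami coefficients on $F$ combined with the normalisation at $0,1,\infty$ forces $h$ to extend conformally across $F$ to a M\"obius transformation of $\Chat$, which must therefore be the identity. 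One then has to show that the isotopy extends across $F$ to exhibit an isotopy of $\psi\circ\varphi^{-1}$ to the identity rel $F$. I expect this extension across $F$ to be the main technical obstacle, since one must control the boundary behaviour of the isotopy along points of $F$ approached from inside $\Chat\setminus F$; this is precisely the difficulty that the union-of-domains formulation of $\mathcal{T}$ is designed to handle.

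Third, for item (2), I would set $\pi := \pi_B\circ R$, where $R\colon M(\Cx)\to M(\Chat\setminus F)$ is the linear restriction and $\pi_B\colon M(\Chat\setminus F)\to\mathcal{T}(\Chat\setminus F)$ is the Bers projection, known to be a holomorphic split submersion. Since $M(\Cx)\to M(\Chat\setminus F)\times M(F)$, $\mu\mapsto(\mu|_{\Chat\setminus F},\mu|_F)$, is a linear homeomorphism, the map $P_F(\mu)=(\pi(\mu),\mu|_F)$ is a split submersion with local holomorphic sections obtained by pairing the Bers sections on the first factor with the trivial linear extension by zero on the second. Transporting the resulting product complex structure via $\Psi^{-1}$ equips $T(F)$ with a complex structure for which $\Psi$ is biholomorphic and $P_F$ is holomorphic. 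Finally, item (3) is a formal consequence: under the identification in (1), $P$ factors as $\Psi^{-1}\circ P_F$, so the fibres of $P$ and $P_F$ coincide, giving the desired equivalence.
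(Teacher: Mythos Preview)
The paper does not prove this lemma; it is quoted directly from Mitra \cite{Mit00} (Corollaries 5.3, 6.1, 6.2) and used as a black box. So there is no ``paper's own proof'' to compare against. Your outline is nonetheless the correct architecture and matches Mitra's approach: identify $T(F)$ with the product via restriction, and pull the split-submersion property across from the Bers projection.

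That said, your injectivity sketch has a gap earlier than where you flag it. You assert that equality of Beltrami coefficients on $F$ together with the normalisation ``forces $h$ to extend conformally across $F$ to a M\"obius transformation.'' This is not immediate. From the Teichm\"uller equivalence on $\Chat\setminus F$ you get a conformal $h\colon\varphi(\Chat\setminus F)\to\psi(\Chat\setminus F)$ and an isotopy of $\psi^{-1}\circ h\circ\varphi$ to the identity rel the \emph{ideal boundary} of $\Chat\setminus F$; in particular this gives $h=\psi\circ\varphi^{-1}$ on $\partial\varphi(F)$. You would then like to glue $h$ on $\Chat\setminus\varphi(F)$ to $\psi\circ\varphi^{-1}$ on $\varphi(F)$ and invoke Weyl's lemma. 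But for that you need the glued map to be quasiconformal (or at least ACL) across $\partial\varphi(F)$, and this removability step is nontrivial when $\partial F$ is rough. Only after this works do you know $\psi|_F=\varphi|_F$, which is a prerequisite for even formulating the isotopy-extension problem you identify as the main obstacle. In Mitra's treatment, both issues are absorbed into the machinery of the Bers projection for unions of domains (the Earle--McMullen isotopy results), rather than argued by a direct gluing as you propose.

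A smaller point: in your well-definedness step, ``$\psi|_F=\varphi|_F$ implies the Beltrami coefficients agree a.e.\ on $F$'' deserves a one-line justification (e.g.\ the standard fact that a Sobolev function vanishing on a measurable set has vanishing weak derivative a.e.\ there, applied to $\psi\circ\varphi^{-1}-\mathrm{id}$).
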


This manifold enabled Mitra to fully describe the holomorphic motions of a closed set $F\subset\Cx$ \cite{Mit07} via the following universality property.

\begin{theorem}\label{lem:mitra}
For any closed set $F\subset\Chat$ containing $0$, $1$, and $\infty$, there exists a universal holomorphic motion $\Psi_F:T(F)\times F\to\Chat$ with the following property: if $H:M\times F\to\Chat$ is any other holomorphic motion of $F$ over a simply connected Banach analytic manifold $M$, then there exists a unique holomorphic map $\phi:M\to T(F)$ such that $\Psi_F(\phi(\lambda), z) = H(\lambda, z)$ for all $(\lambda, z)\in M\times F$. If $M$ is not simply connected, such a lift exists if and only if each $H(\lambda, \cdot)$ extends to a homeomorphism of $\Chat$ that depends continuously on $\lambda$.
\end{theorem}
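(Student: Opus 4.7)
The plan is to construct $\Psi_F$ directly from the tautological quasiconformal representative of a Teichm\"uller class, and then to use the Ahlfors--Bers theorem together with Lemma \ref{lem:TE} to recognise the complex structure on $T(F)$ as precisely what is needed to make $\Psi_F$ a holomorphic motion. For a point $\tau\in T(F)$ represented by a quasiconformal map $\varphi:\Chat\to\Chat$ fixing $\{0,1,\infty\}$, the $F$-equivalence relation ensures that any two representatives agree on $F$, so the assignment $\Psi_F(\tau,z):=\varphi(z)$ for $z\in F$ is well defined. The basepoint condition is satisfied by $\tau_0=[\mathrm{id}]$, and injectivity of $\Psi_F(\tau,\cdot)$ on $F$ is immediate. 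For holomorphy in $\tau$, I would use Lemma \ref{lem:TE}(2): around any $\tau$, the split submersion $P_F$ admits a local holomorphic section $s\colon U\subset T(F)\to M(\Cx)$ producing Beltrami coefficients whose normalised integrating maps $\varphi^{s(\tau)}$ represent $\tau$. Then $\tau\mapsto\varphi^{s(\tau)}(z)$ is holomorphic for each $z$ by Theorem \ref{lem:MRMT}(2), and joint analyticity follows from Hartogs' theorem (Theorem \ref{lem:hartogs}).

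For the universality property, assume first that $M$ is simply connected and (after a preliminary Möbius normalisation using that $0,1,\infty\in F$) that $H(\lambda,\cdot)$ fixes $\{0,1,\infty\}$. I would extend $H$ to a holomorphic motion $\widetilde H\colon M\times\Chat\to\Chat$ using a version of S\l odkowski's theorem valid for simply connected Banach analytic parameter spaces. This provides, for each $\lambda$, a quasiconformal homeomorphism $\widetilde H_\lambda$ of $\Chat$ fixing $\{0,1,\infty\}$, varying holomorphically in $\lambda$, and restricting to $H(\lambda,\cdot)$ on $F$. By Lemma \ref{lem:MRMTconv}, the Beltrami coefficient $\mu_\lambda := \widetilde H_\lambda^\ast\mu_0$ defines a holomorphic map $M\to M(\Cx)$. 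Composing with the canonical projection $P\colon M(\Cx)\to T(F)$ from Lemma \ref{lem:TE} (holomorphic because $P_F$ is) yields the candidate lift $\phi\colon M\to T(F)$, and by construction $\Psi_F(\phi(\lambda),z)=\widetilde H_\lambda(z)=H(\lambda,z)$ for $z\in F$.

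Uniqueness of $\phi$ follows from observing that if $\phi'$ is another lift then representatives of $\phi(\lambda)$ and $\phi'(\lambda)$ agree on $F$; combined with the normalisation fixing $\{0,1,\infty\}$ and the homotopy-lifting property of $P$ using simple connectivity of $M$ relative to the basepoint, one concludes $\phi(\lambda)=\phi'(\lambda)$. For the final sentence of the theorem, the ``only if'' direction is immediate: given a lift $\phi$, the maps $\Psi_F(\phi(\lambda),\cdot)$ themselves furnish quasiconformal homeomorphisms of $\Chat$ depending continuously on $\lambda$. For the ``if'' direction, a continuously varying family of homeomorphic extensions allows the construction above to proceed without invoking S\l odkowski: extracting Beltrami coefficients via Lemma \ref{lem:MRMTconv} and projecting through $P$ are purely local operations that do not require a global holomorphic extension.

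The step I expect to be the main obstacle is the extension of $H$ to a holomorphic motion of $\Chat$ over a general simply connected Banach analytic manifold. S\l odkowski's theorem in its classical form is stated for motions parametrised by $\D$, and extending it here requires either a Banach-analytic version of the Chirka--Rosay argument or a patching construction exploiting the simple connectivity of $M$ to propagate local extensions along paths consistently. Once this is in place, the rest is essentially bookkeeping with the split submersion of Lemma \ref{lem:TE} and the analyticity statements of Theorem \ref{lem:MRMT}, which together characterise the complex structure on $T(F)$ exactly so that $\Psi_F$ is universal.
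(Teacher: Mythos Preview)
The paper does not prove this theorem; it is quoted in the preliminaries as a result of Mitra \cite{Mit07}. So there is no ``paper's own proof'' to compare against, and your proposal must stand on its own.

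Your construction of $\Psi_F$ is fine: the map is well defined on $F$ because $F$-equivalent representatives agree on $F$, and holomorphy follows from local holomorphic sections of the split submersion $P_F$ together with Theorem~\ref{lem:MRMT}(2).

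The genuine gap is exactly the step you flag, but it is worse than you suggest. S\l odkowski's extension theorem is \emph{known to fail} for parameter spaces of complex dimension greater than one; already over the bidisc there are holomorphic motions of finite sets that do not extend to holomorphic motions of $\Chat$. So there is no hope of a ``Banach-analytic version of the Chirka--Rosay argument'' producing a global holomorphic extension of $H$ over $M$, and your plan as written cannot be completed. Mitra's actual route avoids any global extension: one uses the Bers--Royden harmonic extension (Theorem~\ref{lem:lambda2} here) to produce \emph{local} holomorphic lifts $M\supset U\to T(F)$, and then observes that two local lifts agree on overlaps because the corresponding quasiconformal extensions are isotopic rel $F$ (via the path in $U$ joining them). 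Simple connectivity of $M$ is used only to ensure that this patching is globally consistent---it is a monodromy argument, not an extension argument.

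Two smaller issues. First, your uniqueness sketch is incomplete: two points of $T(F)$ can have representatives agreeing on $F$ without being equal (they may differ by a nontrivial mapping class of $\Chat\setminus F$). Uniqueness really comes from the local argument above plus connectedness of $M$, starting from agreement at the basepoint. Second, for the non--simply-connected ``if'' direction, you cannot apply Lemma~\ref{lem:MRMTconv} to a family of homeomorphisms that is merely continuous in $\lambda$; the continuous trivialisation is used only to resolve the monodromy (it tells you which isotopy class rel $F$ you are in), while holomorphy of $\phi$ still comes from the local Bers--Royden extensions.
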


A ``pointwise'' extension of a holomorphic motion as above is called a \textit{trivialization} of the holomorphic motion. If, in addition, the extending homeomorphism of $\Chat$ is quasiconformal for every $\lambda\in M$, it is called a \textit{quasiconformal} trivialization.

The existence of this universal holomorphic motion also enabled Mitra to formulate Bers and Royden's \cite{BR86} result on extension of holomorphic motions over a unit ball in a very natural way \cite[Theorem B]{Mit00}:
\begin{theorem}\label{lem:lambda2}
Let $F\subset \Chat$ be a closed set, and let $H\colon M\times F\to \Chat$ be a holomorphic motion with basepoint $\lambda_0\in M$, where $M$ is a simply connected Banach analytic manifold. Then there is a holomorphic motion $\tilde H\colon U\times\Chat\to\Chat$, where $U = \{\lambda\in M\colon \rho_M(\lambda, \lambda_0) < r\}$, $\rho_M$ is the Kobayashi pseudo-metric on $M$, and $r > 0$ is a univeral constant, that extends $H$. Furthermore, there is a \emph{unique} such extension with a harmonic Beltrami coefficient\footnote{This is a technical condition that will not show up again in this paper; see \cite[Section 2.2]{Mit00}.} in each component of $\Chat\setminus F$.
\end{theorem}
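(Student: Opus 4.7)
The plan is to lift $H$ to the universal Teichm\"uller space $T(F)$ via Theorem \ref{lem:mitra}, perform the extension there using the Bers--Royden construction, and pull back to $M$ using the contraction property of the Kobayashi pseudo-metric under holomorphic maps.

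First, since $M$ is simply connected, Theorem \ref{lem:mitra} produces a unique holomorphic map $\phi\colon M \to T(F)$ with $\phi(\lambda_0) = [\mathrm{id}]$ and $H(\lambda, z) = \Psi_F(\phi(\lambda), z)$ for all $(\lambda, z) \in M \times F$. This reduces the extension problem to a canonical one: extending the universal motion $\Psi_F$ in a neighbourhood of its basepoint in $T(F)$.

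Second, I would apply the Bers--Royden construction to $\Psi_F$. For each $t$ in a Kobayashi ball $B_r \subset T(F)$ of some universal radius $r > 0$ centred at $[\mathrm{id}]$, one selects the unique Beltrami coefficient $\mu(t) \in M(\Cx)$ representing the Teichm\"uller class $t$, agreeing with $t|_F$ on $F$, and which is \emph{harmonic} in the Bers--Royden sense in each component of $\Chat \setminus F$. The assignment $t \mapsto \mu(t)$ is holomorphic into $L^\infty(\Cx)$ on $B_r$, so by Theorem \ref{lem:MRMT}(2) and Hartogs' Theorem (Theorem \ref{lem:hartogs}) the map $\tilde\Psi_F\colon B_r \times \Chat \to \Chat$ defined by $\tilde\Psi_F(t, z) := \varphi^{\mu(t)}(z)$ is jointly analytic, quasiconformal in $z$ for each fixed $t$, and extends $\Psi_F|_{B_r \times F}$. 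Since holomorphic maps contract the Kobayashi pseudo-metric, $\phi$ sends $U := \{\lambda \in M : \rho_M(\lambda, \lambda_0) < r\}$ into $B_r$, and setting $\tilde H(\lambda, z) := \tilde\Psi_F(\phi(\lambda), z)$ yields a holomorphic motion of $\Chat$ extending $H$. Uniqueness is immediate: any other extension $\tilde H'$ with harmonic Beltrami coefficient in each component of $\Chat\setminus F$ would, at each $\lambda$, produce a quasiconformal homeomorphism of $\Chat$ in the Teichm\"uller class $\phi(\lambda)$ whose Beltrami coefficient is harmonic on $\Chat \setminus F$, and uniqueness of the harmonic representative in that class forces $\tilde H'(\lambda, \cdot) = \tilde H(\lambda, \cdot)$.

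The principal obstacle is the second step: the existence and holomorphic dependence of the harmonic section $t \mapsto \mu(t)$ on $B_r \subset T(F)$. This is the technical heart of the Bers--Royden theorem and depends on the precise notion of ``harmonic'' (typically harmonic with respect to the Poincar\'e metric on each component of $\Chat\setminus F$) and on an estimate of the largest Kobayashi radius on which the section remains single-valued and analytic. Since this is classical, I would treat it as a black box, citing \cite{BR86} and its reformulation in \cite{Mit00}, rather than reproducing the estimates here.
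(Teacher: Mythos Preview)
The paper does not prove this theorem; it is stated as a preliminary result and cited directly as \cite[Theorem B]{Mit00}, which in turn reformulates \cite{BR86}. So there is no ``paper's own proof'' to compare against.

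That said, your outline is essentially the architecture of Mitra's proof: lift to $T(F)$ via the universal motion (Theorem~\ref{lem:mitra}), invoke the Bers--Royden harmonic section over a Kobayashi ball in $T(F)$, and pull back using the fact that holomorphic maps are Kobayashi-nonexpanding. You have correctly identified that the substantive content lies entirely in the second step --- the existence, uniqueness, and holomorphic dependence of the harmonic Beltrami representative --- and that this is precisely what \cite{BR86} supplies. Your decision to black-box that step and cite the references is exactly what the paper itself does for the entire statement, so your write-up is if anything more detailed than what the paper provides. One minor caution: be sure that your use of Theorem~\ref{lem:mitra} does not introduce circularity relative to how \cite{Mit00} actually orders these results; in Mitra's development the Bers--Royden extension is an input to, not a consequence of, the universal-motion machinery, so strictly speaking the reduction should cite \cite{BR86} directly for the extension on $T(F)$ rather than bootstrapping through Theorem~\ref{lem:mitra}.
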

Since every Banach analytic manifold is locally simply connected, Theorem \ref{lem:lambda2} in fact enables us to extend any holomorphic motion when restricted to a suitably small neighbourhood of its basepoint at the cost of a universal lower bound on the size of said neighbourhood. For stronger results in multiply connected Banach manifolds, see \cite{JM18}.

\section{Parameter families of entire functions}\label{sec:families}
\subsection{Holomorphic and natural families}
In this subsection, we give an overview of the main concepts used to discuss parameter families of holomorphic functions. Since the functions themselves are holomorphic, we can ask for parameter dependence to be holomorphic as well, bringing us to the concept of a holomorphic family. By Hartogs' Theorem, this means that we can define a holomorphic family of entire functions $(f_\lambda)_{\lambda\in M}$, where $M$ is a Banach analytic manifold, as a holomorphic function $F:M\times\Cx\to\Cx$ with $F(\lambda, z) = f_\lambda(z)$. Since we are concerned with the quasiconformal equivalence class $M_f$ of a given entire function $f$, it also makes sense to consider a different type of parameter family:

\begin{definition}\label{def:natfam}
Let $M$ be a Banach analytic manifold. A \textit{natural family} of entire functions over $M$ is a family $(f_\lambda)_{\lambda\in M}$ such that $f_\lambda = \psi_\lambda\circ f\circ\varphi_\lambda^{-1}$, where $f = f_{\lambda_0}$ is an entire function and $\psi_\lambda$ and $\varphi_\lambda$ are quasiconformal homeomorphisms of $\Cx$ depending holomorphically on $\lambda\in M$.
\end{definition}

It is easy to show (see the proof of Theorem \ref{thm:unf}) that a natural family $(f_\lambda)_{\lambda\in M}$ is also a holomorphic family, with the additional feature that the singular values of $f_\lambda$ \textit{and} their pre-images move holomorphically with $\lambda$. Astorg, Benini, and Fagella showed \cite[Theorem 2.6]{ABF22} that the converse also holds locally for finite-type maps: every holomorphic family of finite-type maps for which singular values and their pre-images move holomorphically can be locally expressed as a natural family.

We can now discuss structural stability in natural families of entire functions. The ``standard'' definition is the following:

\begin{definition}\label{def:Jstable}
Let $(f_\lambda)_{\lambda\in M}$ be a natural family of entire functions, where $M$ is a Banach analytic manifold. We say that \emph{$J(f_\lambda)$ moves holomorphically over $\Lambda\subset M$} (or, if we wish to be more precise, moves holomorphically \emph{in $(f_\lambda)_{\lambda\in \Lambda}$}) if, for every point $\lambda_0\in\Lambda$, there exists a neighbourhood $\Lambda'\subset\Lambda$ of $\lambda_0$ and a holomorphic motion of $J(f_{\lambda_0})$ over $\Lambda'$ such that:
\begin{itemize}
    \item For each $\lambda\in\Lambda'$, $H(\lambda, J(f_{\lambda_0})) = J(f_\lambda)$.
    \item $H(\lambda, f_{\lambda_0}(z)) = f_\lambda(H(\lambda, z))$ for every $(\lambda, z)\in \Lambda'\times J(f_{\lambda_0})$.
\end{itemize}
In other words, $H(\lambda, \cdot)$ conjugates $f_{\lambda_0}|_{J(f_{\lambda_0})}$ to $f_\lambda|_{J(f_\lambda)}$.
\end{definition}

\subsection{Universal natural families} 
In this subsection, we prove Theorem \ref{thm:unf}. As before, denote by $M(X)$, $X\subset \Cx$, the unit ball in the Banach space $L^\infty(X)\subset L^\infty(\Cx)$. Also, denote by $S(f)$ the set of singular values of $f$. We will prove Theorem \ref{thm:unf} as a consequence of the following more general result.
\begin{theorem}\label{thm:unfgeneral}
    Let $f$ be an entire function, and assume that $f$ has at least two singular values. Let $F = S(f)\cup\{\infty\}$, where $S(f)$ is the set of singular values of $f$. Then, there exists a Banach analytic manifold $T_f = T(F)\times(\Cx^*\times\Cx)^2$ and a surjective map $\Phi:T_f\to M_f\subset E$ satisfying the following properties:
    \begin{enumerate}[(i)]
        \item $\Phi$ is continuous relative to the topology of locally uniform convergence and the function $T_f\times\Cx\ni (\lambda, z)\mapsto \Phi_\lambda(z)\in\Cx$ is analytic.
        \item Let $(f_\lambda)_{\lambda\in M}$, where $M$ is a Banach analytic manifold, be a natural family containing  $f$. Then, there exists a holomorphic function $\phi:M\to T_f$ such that $f_\lambda = \Phi_{\phi(\lambda)}$. In other words, $\phi$ lifts the natural inclusion $M\ni \lambda\mapsto f_\lambda\in E$.
        \item The map $\Phi$ has unique path lifting. More specifically, given any continuous path $[0, 1]\ni t\mapsto g_t\in M_f$ and any point $\lambda_0\in \Phi^{-1}(g_0)$, there exists a unique path $\gamma\colon[0, 1]\to T_f$ such that $\gamma(0) = \lambda_0$ and $\Phi_{\gamma(t)} = g_t$ for all $t\in[0, 1]$.
    \end{enumerate}
    Furthermore, $T_f$ is finite-dimensional if and only if $f$ has finitely many singular values.
    \end{theorem}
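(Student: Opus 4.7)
The plan is to construct $\Phi$ explicitly by encoding the quasiconformal conjugacy class $M_f$ as a pair of Beltrami coefficients --- one on the target side of $f$, parametrised by $T(F)$, and one on the source side forced by pulling back under $f$ --- together with two affine normalisations accounting for the $(\Cx^*\times\Cx)^2$ factor. Concretely, for $\lambda = (\tau, (a_1, b_1), (a_2, b_2))\in T_f$, I pick a Beltrami representative $\mu$ of $\tau$, integrate it via Theorem \ref{lem:MRMT} to obtain a normalised quasiconformal $\tilde\psi_\mu$, set $\psi_\lambda(z) = a_1\tilde\psi_\mu(z) + b_1$, integrate the pullback $f^*\mu_{\psi_\lambda}$ (which has the same $L^\infty$-norm as $\mu_{\psi_\lambda}$) to obtain $\tilde\varphi_\lambda$, set $\varphi_\lambda(z) = a_2\tilde\varphi_\lambda(z) + b_2$, and define $\Phi_\lambda := \psi_\lambda\circ f\circ\varphi_\lambda^{-1}$. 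By construction the Beltrami coefficient of $\Phi_\lambda$ vanishes almost everywhere, so $\Phi_\lambda$ is entire and lies in $M_f$.

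The first task is to check that $\Phi_\lambda$ depends only on the Teichm\"uller class $\tau$ and not on the representative $\mu$: two such representatives give quasiconformal maps that agree on $F$ and differ by an isotopy rel $F$, and this isotopy propagates through to a compensating modification of $\tilde\varphi_\lambda$ that leaves the composed entire function unchanged --- a cleaner way to see this is to define $\Phi$ first on $M(\Cx)\times(\Cx^*\times\Cx)^2$ and verify that it factors through the split submersion $P_F$ of Lemma \ref{lem:TE}(2). Surjectivity onto $M_f$ is the reverse construction, reading off $[\psi]_F\in T(F)$ and the affine data from any presentation $g = \psi\circ f\circ\varphi^{-1}$. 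Analyticity of $(\lambda, z)\mapsto \Phi_\lambda(z)$ then follows by Hartogs' theorem (Theorem \ref{lem:hartogs}): entirety in $z$ is automatic, and analyticity in $\lambda$ reduces to the analytic dependence of integrating maps on Beltrami coefficients (Theorem \ref{lem:MRMT}(2)), the fact that pullback by the holomorphic function $f$ is a bounded linear and hence analytic operator on $L^\infty(\Cx)$, and holomorphy of the two affine factors. Continuity of $\Phi$ into $E$ in the topology of locally uniform convergence is then a straightforward consequence.

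For (ii), a natural family $(f_\lambda = \psi_\lambda\circ f\circ\varphi_\lambda^{-1})_{\lambda\in M}$ with $\psi_\lambda, \varphi_\lambda$ holomorphic in $\lambda$ induces a holomorphic motion $\lambda\mapsto\psi_\lambda|_F$ of $F$ over $M$; by the universal property of Theorem \ref{lem:mitra}, this lifts uniquely to a holomorphic map $M\to T(F)$, and the affine parameters are extracted holomorphically from the values of $\psi_\lambda$ and $\varphi_\lambda$ at the normalisation points. The dimension statement is then immediate from Lemma \ref{lem:TE}(1): $M(F)$ is trivial exactly when $F$ has Lebesgue measure zero, and the Teichm\"uller space of a finitely punctured sphere is finite-dimensional, so $T_f$ is finite-dimensional if and only if $S(f)$ is finite.

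The main obstacle is the unique path lifting in (iii), since a merely continuous path $t\mapsto g_t$ in $M_f$ is not a priori a holomorphic motion and Theorem \ref{lem:mitra} does not apply directly. My plan is twofold: first, show that $\Phi$ is locally injective and admits local continuous sections by using (ii) to produce holomorphic parametrisations of natural families around each $g_0\in M_f$ which lift uniquely through $T_f$; second, construct the global continuous lift by patching local lifts along the compact interval $[0, 1]$. Existence of the initial local presentation of $g_t$ as $\psi_t\circ f\circ\varphi_t^{-1}$ with $\psi_t, \varphi_t$ continuous in $t$ requires a continuous-selection argument for the Beltrami coefficients $\mu_{\psi_t}\in M(\Cx)$, combined with the continuity of the Ahlfors--Bers integration map; the uniqueness of the lift then follows by a standard connectedness argument, with two lifts agreeing at a point agreeing on an open-and-closed subset of $[0, 1]$ --- openness coming from the local-homeomorphism property and closedness from continuity.
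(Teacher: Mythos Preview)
Your construction of $\Phi$ and your arguments for (i) and (ii) are essentially the paper's approach: build $\tilde\Phi$ on $M(\Cx)\times(\Cx^*\times\Cx)^2$, push it down via $P_F$ (the paper's Lemma~\ref{lem:push}), and use Mitra's universal property for (ii). Two details you glide over but should name: for the factoring, the isotopy rel $F$ on the target side must be lifted to the source side via the covering homotopy theorem (using that $\Cx\setminus S(f)$ carries the covering $f$), and one must then argue via Weyl's lemma that the resulting $\varphi$'s differ by an affine map, with a separate argument when $S(f)$ has positive measure; and for (ii), Theorem~\ref{lem:mitra} needs a trivialisation when $M$ is not simply connected, which is supplied by $\psi_\lambda$ itself, and one must check $f_\lambda=\Phi_{\phi(\lambda)}$ by invoking that any two quasiconformal trivialisations of the same motion of $F$ are isotopic rel $F$.

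The real gap is in (iii). Your plan hinges on showing that $\Phi$ is locally injective, but this is \emph{not} available at the level of generality of Theorem~\ref{thm:unfgeneral}: the paper establishes local injectivity only under the additional hypothesis $\Cx\setminus S(f)\neq\emptyset$ (this is what separates Theorem~\ref{thm:unf} from Theorem~\ref{thm:unfgeneral}), and the argument there is nontrivial quasiconformal surgery. Your ``open-and-closed'' uniqueness argument therefore has no engine. The paper's route is quite different and does not go through local injectivity at all. For \emph{existence} of the lift, it does not attempt a continuous selection of Beltrami coefficients (your proposed selection argument is where the difficulty hides, not a routine step); instead it uses that the singular values of $g_t$ move continuously with $t$ (Krauskopf--Kriete), giving a continuous motion of $F$ that lifts to a path in $T(F)$, and tracks the affine data by the implicit function theorem on preimages of two marked singular values. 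For \emph{uniqueness}, the substantive content is Lemma~\ref{lem:isotopy}: if $\psi_t\circ g\circ\varphi_t^{-1}=g$ with $\psi_0=\mathrm{id}$, then $\psi_t$ is isotopic to the identity rel $F$. Proving this requires showing $\psi_t$ fixes each critical value (easy, by discreteness of critical points) and each asymptotic value (delicate: one transfers to the disc via a Riemann map and invokes Lindel\"of's and Lehto--Virtanen's theorems on non-tangential limits to see that $\varphi_t$ cannot continuously permute the accesses to infinity that encode asymptotic values). Without this lemma or a substitute, uniqueness in (iii) does not follow.
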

Most of this subsection is devoted to the proof of Theorem \ref{thm:unfgeneral}. We start with a much easier result:
\begin{lemma} \label{lem:phiM}
For any entire function $f\in E$, there exists a function $\tilde\Phi:M(\Cx)\times(\Cx^*\times\Cx)^2\to M_f$ such that $\tilde\Phi$ is continuous, surjective, and the mapping $M(\Cx)\times(\Cx^*\times\Cx)^2\times \Cx \ni (\lambda, z)\mapsto \tilde\Phi_\lambda(z)\in\Cx$ is analytic.
\end{lemma}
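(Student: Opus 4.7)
The plan is to construct $\tilde\Phi$ explicitly by ``straightening out'' the quasiconformal maps appearing in the definition of $M_f$. Given $\mu\in M(\Cx)$, observe first that since $f$ is holomorphic, the pullback $f^*\mu$, given (a.e.) by $f^*\mu(z)=\mu(f(z))\,\overline{f'(z)}/f'(z)$, also lies in $M(\Cx)$ because $\|f^*\mu\|_\infty=\|\mu\|_\infty<1$. Let $\psi^\mu$ and $\varphi^{f^*\mu}$ be the unique normalised integrating quasiconformal homeomorphisms provided by Theorem \ref{lem:MRMT}, and set
\begin{equation*}
g_\mu := \psi^\mu\circ f\circ (\varphi^{f^*\mu})^{-1}.
\end{equation*}
A direct Beltrami-coefficient computation shows $(g_\mu)^*\mu_0=\mu_0$ off the (measure-zero) critical set of $f$, and since $g_\mu$ is at least a quasiregular map of $\Cx$, Weyl's lemma (or removability of critical points for quasiregular maps) makes $g_\mu$ entire. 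I then incorporate the affine freedom by setting
\begin{equation*}
\tilde\Phi\!\left(\mu,(a_1,b_1),(a_2,b_2)\right)(z) := a_2\,g_\mu\!\left(\tfrac{z-b_1}{a_1}\right)+b_2,
\end{equation*}
so that $\tilde\Phi_\lambda\in M_f$ tautologically.

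For surjectivity, take any $g=\psi\circ f\circ\varphi^{-1}\in M_f$ and set $\mu:=\psi^*\mu_0\in M(\Cx)$. Both $\psi$ and $\psi^\mu$ integrate $\mu$, so by the uniqueness clause of Theorem \ref{lem:MRMT}(1) they differ by a conformal automorphism of $\Cx$, which is necessarily of the form $A_2(z)=a_2z+b_2$. Since $g$ is entire, one computes $\varphi^*\mu_0=f^*\mu$, and the same uniqueness argument yields an affine $A_1(z)=a_1z+b_1$ with $\varphi=A_1\circ\varphi^{f^*\mu}$. Thus $g=A_2\circ g_\mu\circ A_1^{-1}=\tilde\Phi(\mu,(a_1,b_1),(a_2,b_2))$, and $\tilde\Phi$ is surjective.

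For regularity, Theorem \ref{lem:MRMT}(2) says that $\mu\mapsto\psi^\mu(z)$ is analytic pointwise in $z$ and continuous into $QC(\Cx,\Cx)$ with the compact-open topology; the pullback operator $\mu\mapsto f^*\mu$ is bounded and linear, hence analytic, on $L^\infty(\Cx)$, so $\mu\mapsto\varphi^{f^*\mu}(z)$ inherits the same property. Since composition with affine maps is jointly analytic in $(a_1,b_1,a_2,b_2,z)$, separate analyticity in each group of variables holds, and Hartogs' Theorem (Theorem \ref{lem:hartogs}) upgrades this to joint analyticity of $(\mu,(a_1,b_1),(a_2,b_2),z)\mapsto\tilde\Phi_\lambda(z)$; continuity of $\tilde\Phi$ in the compact-open topology of $E$ comes from the analogous continuity of $\mu\mapsto\psi^\mu$ and $\mu\mapsto\varphi^{f^*\mu}$. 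The two spots that require genuine care, rather than bookkeeping, are (i) verifying that $g_\mu$ is truly holomorphic across the critical set of $f$ (a removability argument), and (ii) the surjectivity step, which hinges on the fact that conformal automorphisms of $\Cx$ are exactly the complex-affine maps — this is what forces precisely two affine factors and thus the $(\Cx^*\times\Cx)^2$ parameter.
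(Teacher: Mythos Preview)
Your construction and your surjectivity argument are essentially the same as the paper's (the paper places the affine factors on $\psi$ and $\varphi$ rather than on the outside of $g_\mu$, but this is immaterial). However, there is a genuine gap in your regularity argument.

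You correctly observe that $\mu\mapsto\psi^\mu(z)$ and $\mu\mapsto\varphi^{f^*\mu}(z)$ are holomorphic for each fixed $z$, and that the pullback $\mu\mapsto f^*\mu$ is bounded linear. But you then assert ``separate analyticity in each group of variables holds'' and invoke Hartogs, without ever establishing that $\mu\mapsto g_\mu(z)$ is holomorphic for fixed $z$. This does not follow from what you have written: the formula $g_\mu(z)=\psi^\mu\circ f\circ(\varphi^{f^*\mu})^{-1}(z)$ involves the \emph{inverse} of the integrating map, and $\mu\mapsto(\varphi^{f^*\mu})^{-1}(z)$ is \emph{not} holomorphic in $\mu$ in general (the Beltrami coefficient of $(\varphi^\nu)^{-1}$ depends anti-holomorphically on $\nu$). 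So one cannot simply compose holomorphic dependences here.

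The paper handles this point by invoking the argument of Buff and Ch\'eritat \cite[p.~21]{BC04}. The idea is to exploit the relation $g_\mu\bigl(\varphi^{f^*\mu}(w)\bigr)=\psi^\mu(f(w))$: for each fixed $w$, the right-hand side is holomorphic in $\mu$, and the point $z=\varphi^{f^*\mu}(w)$ moves holomorphically in $\mu$. Combined with the fact that $g_\mu$ is holomorphic in $z$ for each $\mu$ (this is where entirety of $g_\mu$ is used) and continuous in $(\mu,z)$, one concludes that $(\mu,z)\mapsto g_\mu(z)$ is holomorphic. You should add this step; without it the analyticity claim is unproved.
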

\begin{proof}
Fix a function $f\in E$, and fix $z_0\in f^{-1}(0)$ and $z_1\in f^{-1}(1)$ (we can assume without loss of generality that neither $0$ nor $1$ is an ommited value by conjugating $f$ by an affine map). Let $\lambda = (\mu, a_1, b_1, a_2, b_2) \in M(\Cx)\times(\Cx^*\times\Cx)^2$. We start by applying Theorem \ref{lem:MRMT} to $\mu$, obtaining a quasiconformal map $\psi^\mu$ fixing $0$ and $1$. If we let $\psi_\lambda(z) = a_1\psi^\mu(z) + b_1$, then $\psi_\lambda$ ranges over all quasiconformal maps with Beltrami coefficient $\mu$ and is analytic in $\lambda$ by Theorems \ref{lem:MRMT} and \ref{lem:hartogs}. Notice that, because $\psi_\lambda(0)=b_1$ and $\psi_\lambda(1)=a_1+b_1$, the role of $a_1\in\Cx^*$, $b_1\in\Cx$ is that they determine normalisations for $\psi_\lambda$.

Next, let $\mu_\lambda = (\psi_\lambda\circ f)^*\mu_0 = f^*\mu$. It can be shown (see, for instance, \cite[p. 17]{BF14}) that
\[ \mu_\lambda(z) = \mu\left(f(z)\right)\frac{\overline{f'(z)}}{f'(z)}, \]
meaning that the map $\lambda\mapsto \mu_\lambda(z)$ is still analytic for every $z\in\Cx$. It is time to apply Theorem \ref{lem:MRMT} again to obtain a quasiconformal map $\varphi^{\mu_\lambda}$ with Beltrami coefficient $\mu_\lambda$ and fixing $z_0$ and $z_1$. Defining $\varphi_\lambda(z) = a_2\varphi^{\mu_\lambda}(z) + b_2$, this once again varies over all quasiconformal maps with Beltrami coefficient $\mu_\lambda$ as $a_2\in\Cx^*$ and $b_2\in\Cx$ vary. Once again, since $\varphi_\lambda(z_i)=a_2 z_i+b_2$, the variables $a_2$ and $b_2$ determine normalisations for $\varphi_\lambda$.

Defining the function $g_\lambda(z) = \psi_\lambda\circ f\circ(\varphi_\lambda)^{-1}(z)$, it follows from the construction that $g_\lambda$ is entire, and we see by an argument originally by Buff and Ch\'eritat \cite[p. 21]{BC04} that the map $\lambda\mapsto g_\lambda(z)$ is holomorphic in $\lambda$ for any fixed $z\in\Cx$. The continuity of $\lambda\mapsto g_\lambda$ follows from Theorem \ref{lem:MRMT}, and it is analytic as a map $(\lambda, z)\mapsto g_\lambda(z)$ by Hartogs' Theorem. Clearly, the function $\tilde\Phi$ is also surjective.
\end{proof}

Notice that we can already deduce Corollary \ref{cor:connected}:
\begin{proof}[Proof of Corollary \ref{cor:connected}]
For any entire function $f$, its equivalence class $M_f$ corresponds to the image under the continuous function $\tilde \Phi$ of the path-connected Banach manifold $M(\Cx)\times(\Cx^*\times\Cx)^2$, and is therefore path-connected.
\end{proof}

The domain $M(\Cx)\times(\Cx^*\times\Cx)^2$ is, in general, too big for our purposes -- because $M(\Cx)$ is too big. From now on, we assume that $0$ and $1$ are singular values of $f$ such that $\{0, 1\}\subset\partial S(f)$ (which, again, can always be achieved by conjugating $f$). It turns out, then, that we can push $\tilde\Phi$ down to a smaller space:
\begin{lemma} \label{lem:push}
Let $\tilde\Phi$ be as given by Lemma \ref{lem:phiM}, let $F = S(f)\cup\{\infty\}$, and let $\P_F:M(\Cx)\times(\Cx^*\times\Cx)^2\to T(F)\times(\Cx^*\times\Cx)^2$ be given by $\P_{F}(\mu, a_1, b_1, a_2, b_2) = (P_F(\mu), a_1, b_1, a_2, b_2)$ where $P_F$ is the map from Lemma~\ref{lem:TE}. Then, if $\lambda_1, \lambda_2\in M(\Cx)\times(\Cx^*\times\Cx)^{2}$ are such that $\P_F(\lambda_1) = \P_F(\lambda_2)$, we have $\tilde\Phi(\lambda_1) = \tilde\Phi(\lambda_2)$.
\end{lemma}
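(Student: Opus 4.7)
My plan is to show $g_{\lambda_1} = g_{\lambda_2}$ by constructing a quasiconformal self-map $\beta$ of $\Cx$ that intertwines the $\varphi$-factors of the two natural families and agrees with $(\varphi^{\mu_{\lambda_2}})^{-1}\circ\varphi^{\mu_{\lambda_1}}$. First, since $\P_F(\lambda_1)=\P_F(\lambda_2)$, the affine parameters $(a_1,b_1,a_2,b_2)$ agree; by Lemma~\ref{lem:TE}(3), $P(\mu_1)=P(\mu_2)$, so $\psi^{\mu_1}$ and $\psi^{\mu_2}$ are $F$-equivalent and agree pointwise on $F$. Setting $\tilde\alpha := (\psi^{\mu_2})^{-1}\circ\psi^{\mu_1}$, I see that $\tilde\alpha$ fixes $F$ pointwise; unpacking the $F$-equivalence as an isotopy $\psi_t$ from $\psi^{\mu_1}$ to $\psi^{\mu_2}$ with $\psi_t|_F = \psi^{\mu_1}|_F$ for all $t$, I obtain an isotopy $\tilde\alpha_t := \psi_t^{-1}\circ\psi^{\mu_1}$ from the identity to $\tilde\alpha$ that fixes $F$ (hence $S(f)\subset F$) pointwise for all $t$.

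Next, I would lift $\tilde\alpha_t$ through the covering map $f\colon\Cx\setminus f^{-1}(S(f))\to\Cx\setminus S(f)$. The homotopy lifting property yields a unique isotopy $\beta_t$ starting at $\beta_0=\mathrm{id}$ with $f\circ\beta_t = \tilde\alpha_t\circ f$. Because $\tilde\alpha_t$ fixes $S(f)$ pointwise and the fibres $f^{-1}(s)$ for $s\in S(f)$ are discrete, the lift extends continuously to all of $\Cx$, fixing $f^{-1}(S(f))$ pointwise; in particular, the time-one map $\beta := \beta_1$ is a quasiconformal homeomorphism of $\Cx$ satisfying $f\circ\beta = \tilde\alpha\circ f$, with $\beta(z_0)=z_0$ and $\beta(z_1)=z_1$. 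I would then compute
\[ (\varphi^{\mu_{\lambda_2}}\circ\beta)^*\mu_0 = \beta^*(f^*\mu_2) = (f\circ\beta)^*\mu_2 = (\tilde\alpha\circ f)^*\mu_2 = f^*(\tilde\alpha^*\mu_2). \]
The chain of conformal maps $(\Cx,\mu_1)\xrightarrow{\psi^{\mu_1}}(\Cx,\mu_0)\xrightarrow{(\psi^{\mu_2})^{-1}}(\Cx,\mu_2)$ shows that $\tilde\alpha\colon(\Cx,\mu_1)\to(\Cx,\mu_2)$ is conformal, whence $\tilde\alpha^*\mu_2=\mu_1$. Hence $\varphi^{\mu_{\lambda_2}}\circ\beta$ has Beltrami coefficient $\mu_{\lambda_1}$ and the same normalization as $\varphi^{\mu_{\lambda_1}}$; by the uniqueness part of Theorem~\ref{lem:MRMT}, $\varphi^{\mu_{\lambda_2}}\circ\beta = \varphi^{\mu_{\lambda_1}}$.

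Finally, I would unwind the identities: $(\varphi^{\mu_{\lambda_1}})^{-1} = \beta^{-1}\circ(\varphi^{\mu_{\lambda_2}})^{-1}$, the lifting relation gives $f\circ\beta^{-1}=\tilde\alpha^{-1}\circ f$, and $\psi^{\mu_1}\circ\tilde\alpha^{-1} = \psi^{\mu_1}\circ(\psi^{\mu_1})^{-1}\circ\psi^{\mu_2} = \psi^{\mu_2}$. Combined with the affine relations $\psi_{\lambda_j}=a_1\psi^{\mu_j}+b_1$ and $\varphi_{\lambda_j}=a_2\varphi^{\mu_{\lambda_j}}+b_2$, a direct chain of substitutions in $g_{\lambda_1}(z)=\psi_{\lambda_1}(f(\varphi_{\lambda_1}^{-1}(z)))$ produces $g_{\lambda_2}(z)$, proving $\tilde\Phi(\lambda_1)=\tilde\Phi(\lambda_2)$. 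The main obstacle I anticipate is the lifting step: I must convert the abstract $F$-equivalence of $\psi^{\mu_1}$ and $\psi^{\mu_2}$ into a concrete isotopy of $\tilde\alpha$ to the identity rel $F$ through which the covering structure of $f$ can lift, and then extend the lifted isotopy continuously across $f^{-1}(S(f))$ using discreteness of preimage fibres of the entire function $f$.
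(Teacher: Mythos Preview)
Your argument follows essentially the same route as the paper's: turn the $F$-equivalence of $\psi^{\mu_1}$ and $\psi^{\mu_2}$ into an isotopy rel $F$, lift it through the covering $f\colon\Cx\setminus f^{-1}(S(f))\to\Cx\setminus S(f)$ via the covering homotopy theorem, extend the lift over $f^{-1}(S(f))$ using that the isotopy is constant on $S(f)$, and then compare normalisations. The only substantive difference is in the endgame. The paper lifts $\tilde\psi_t$ (from $\psi_0$ to $\psi_1$) to $\tilde\varphi_t$, deduces $g_0 = g_1\circ(\tilde\varphi_1\circ\varphi_1^{-1})$, and then argues that $\tilde\varphi_1\circ\varphi_1^{-1}$ is affine by checking its Beltrami coefficient vanishes---which forces a case split according to whether $S(f)$ has positive Lebesgue measure, invoking Lemma~\ref{lem:TE} to see that $\mu_1|_F = \mu_2|_F$ in the positive-measure case before applying Weyl's lemma. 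Your version instead lifts $\tilde\alpha_t$ to $\beta_t$ and computes the Beltrami coefficient of $\varphi^{\mu_{\lambda_2}}\circ\beta$ directly via $\tilde\alpha^*\mu_2=\mu_1$, which holds a.e.\ without any case analysis and so absorbs the positive-measure case automatically; this is marginally cleaner. One small omission: your lifting step tacitly assumes $\Cx\setminus S(f)\neq\emptyset$, whereas the paper disposes of the degenerate case $S(f)=\Cx$ separately (there $P_F$ is essentially the identity on $M(\Cx)$, so equality of $\mu_1$ and $\mu_2$ is immediate).
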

\begin{proof}
Let $\lambda_i = (\mu_i, a_{i,1}, b_{i,1}, a_{i,2}, b_{i,2})$, $i = 0, 1$. It is clear from the definition of $\P_F$ that we have $a_{0,j} = a_{1,j}$ and $b_{0,j} = b_{1,j}$ for $j = 1, 2$, so that we must focus on the role of $\mu_i$. We now consider two separate cases: first, if $S(f) = \Cx$, then (by Lemma \ref{lem:TE}) we have $P_F(\mu_0) = P_F(\mu_1)$ if and only if $\mu_0$ and $\mu_1$ have the same Beltrami coefficient. By uniqueness of the integrating quasiconformal map, it follows easily that $g_0 = g_1$.

Let us assume now that $S(f)$ is a proper subset of $\Cx$; the proof is essentially the same as that of \cite[Lemma 2]{EL92} with minor modifications. More specifically, let $g_0 = \tilde\Phi(\lambda_0) = \psi_0\circ f\circ(\varphi_0)^{-1}$ and $g_1 = \tilde\Phi(\lambda_1) = \psi_1\circ f\circ (\varphi_1)^{-1}$. Then, one has from the definition of $T(F)$ that $\psi_0$ and $\psi_1$ are $F$-equivalent, meaning that there exists an isotopy $\tilde\psi_t:\Cx\to\Cx$, $t\in[0, 1]$, such that $\tilde\psi_0 = \psi_0$, $\tilde\psi_1 = \psi_1$, and $\psi_t|_F$ is the identity for every $t\in[0, 1]$. Since $\Cx\setminus S(f)$ is non-empty, one can apply the covering homotopy theorem to find an isotopy $\tilde\varphi_t:\Cx\setminus f^{-1}(S(f))\to\Cx\setminus g_0^{-1}\circ\psi_0(S(f))$, $t\in[0, 1]$, such that $\tilde\varphi_0 = \varphi_0$, and $g_0\circ\tilde\varphi_t(z) = \tilde\psi_t\circ f(z)$ for all $z\in\Cx\setminus f^{-1}(S(f))$ and $t\in[0, 1]$. Moreover, since the isotopy $\tilde\psi_t$ is constant on $S(f)$, we can extend its lift $\tilde\varphi_t$ continuously to $f^{-1}(S(f))$ by defining $\tilde\varphi_t(z) = \varphi_0(z)$ for any $z\in f^{-1}(S(f))$. Setting $t = 1$, we obtain
\[ g_0\circ\tilde\varphi_1(z) = \psi_1\circ f(z) = g_1\circ \varphi_1(z), z\in\Cx\setminus f^{-1}(S(f)), \]
where the last equality follows from the definition of $g_1$. We see that $g_0 = g_1\circ(\tilde\varphi_1\circ\varphi_1^{-1})$. To proceed, recall that the lift $\tilde\varphi_t$ is constant on $f^{-1}(S(f))$ (which exact permutation of this set is enacted by $\tilde\varphi_t$ does not depend on $t$, and is uniquely determined by $a_{0,2}$ and $b_{0,2}$). Furthermore, since $g_0$ and $g_1$ are entire, the composition $\tilde\varphi_1\circ\varphi_1^{-1}$ is conformal on $\Cx\setminus f^{-1}(F)$, and hence has Beltrami coefficient zero on this set. If $S(f)$ has Lebesgue measure zero, we can apply Weyl's lemma to conclude that $\tilde\varphi_1$ and $\varphi_1$ differ by an automorphism of $\Cx$, and because $a_{0,2} = a_{1,2}$ and $b_{0,2} = b_{1,2}$ we have that this automorphism is the identity. We are done. If $S(f)$ has positive measure, we appeal to Lemma \ref{lem:TE}: $\psi_0$ and $\psi_1$, and in fact all the functions $\psi_t$, $t\in[0, 1]$, have the same Beltrami coefficients in $F$, and therefore so do $\tilde\varphi_1$ and $\varphi_1$. Thus, once again, $\tilde\varphi_1\circ\varphi_1^{-1}$ has zero Beltrami coefficient for $z\in f^{-1}(F)$. We can now apply Weyl's lemma, and the conclusion follows by the same argument as before.
\end{proof}
\begin{remark}
There is another way of passing from $M(\Cx)\times(\Cx^*\times\Cx)^2$ to $T_f$ that also works for functions with less than two singular values. Namely, when considering the set $F$, we assume that $0$ and $1$ are \textit{not} singular values of $f$, and take $F := \{0, 1, \infty\}\cup S(f)$. Doing it like this, however, gives us two ``extra'' dimensions of fibre for $\tilde\Phi$, corresponding to holomorphic motions of $F$ that move $0$ and $1$ but not the singular values of $f$. One then obtains the final manifold $T_f$ by projecting down to $T(F)\times(\Cx^*\times\Cx)^2$ and then projecting down again to remove the ``extra'' fibre.
\end{remark}

Lemma \ref{lem:push} shows that the projection $\mathcal{P}_F$ pushes $\tilde\Phi$ down to a well-defined function $\Phi\colon T(F)\times(\Cx^*\times\Cx)^2\to M_f$, which inherits all the properties of $\tilde\Phi$. This completes the proof of Theorem \ref{thm:unfgeneral}(i); we will see (ii) is now an easy consequence of Theorem \ref{lem:mitra}. Indeed, if $f_\lambda = \psi_\lambda\circ f\circ\varphi_\lambda^{-1}$, $\lambda\in M$, is a natural family containing $f$, then $\psi_\lambda$ induces a holomorphic motion of $F = S(f)\cup\{\infty\}$ that, by definition, admits a quasiconformal trivialization. Thus, by Theorem \ref{lem:mitra}, one can find a holomorphic function $\phi\colon M\to T_f$ (the other coordinates of $\phi$ can be ``read'' by tracking the images under $\psi_\lambda$ of $0$ and $1$, and under $\varphi_\lambda$ of the pre-images $z_0$ and $z_1$), giving rise to a natural family $\Phi_{\phi(\lambda)}$. The fact that the two natural families coincide follows from the fact that any two extensions of a holomorphic motion of $F$ are isotopic rel $F$ (see \cite[Theorem C]{Mit00}) by applying Lemma \ref{lem:push}.

Next, we prove Theorem \ref{thm:unfgeneral}(iii) as a consequence of the following lemma:
\begin{lemma}\label{lem:isotopy}
Let $g$ be an entire function, and let $\psi_t$ and $\varphi_t$, $t\in[0, 1]$, be quasiconformal homeomorphisms of $\Cx$ depending continuously on $t$ such that $\psi_t\circ g\circ\varphi_t^{-1} = g$ for every $t\in[0, 1]$ and $\psi_0(z) = z$ for all $z\in\Cx$. Then, $\psi_t$ is isotopic to the identity rel $F = S(f)\cup\{\infty\}$ for all $t\in[0, 1]$. In particular, $t\mapsto\psi_t(z)$ is constant for $z\in S(g)$, and $t\mapsto\varphi_t(z)$ is constant for $z\in g^{-1}(S(g))$.
\end{lemma}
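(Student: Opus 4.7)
The strategy is to reduce the isotopy claim to showing that $\psi_t$ fixes $F = S(g) \cup \{\infty\}$ pointwise for every $t\in [0,1]$. Once this is established, the straight-line homotopy $H(s, z) := \psi_{st}(z)$, $s\in [0,1]$, is continuous, fixes $F$ throughout, and connects the identity to $\psi_t$, giving the required isotopy rel $F$. Both ``in particular'' conclusions follow: the first is this pointwise fixing, and the second because for $z\in g^{-1}(S(g))$ the relation $g\circ \varphi_t = \psi_t\circ g$ combined with $\psi_t|_{S(g)} = \mathrm{id}$ places $\varphi_t(z)$ in the discrete fiber $g^{-1}(g(z))$, inside which the continuous path $t\mapsto \varphi_t(z)$ must be constant.

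To show $\psi_t|_F = \mathrm{id}$, I would proceed by handling $\infty$, critical values, and asymptotic values separately. The first two are straightforward. Any quasiconformal homeomorphism of $\Cx$ extends to $\Chat$ fixing $\infty$, so $\psi_t(\infty) = \infty$. Moreover, applying the general identity $S(\alpha \circ h \circ \beta^{-1}) = \alpha(S(h))$ (valid for $\alpha,\beta$ quasiconformal homeomorphisms) to $g = \psi_t \circ g \circ \varphi_t^{-1}$ gives $\psi_t(S(g)) = S(g)$ setwise. For critical values: the equation $g \circ \varphi_t = \psi_t \circ g$ and the fact that $\psi_t,\varphi_t$ are homeomorphisms imply that the local topological degree of $g$ at $z$ equals its local degree at $\varphi_t(z)$, so $\varphi_t$ permutes the discrete set $C$ of critical points of $g$. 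Continuity of $t\mapsto\varphi_t(c)$ inside $C$ forces $\varphi_t|_C = \varphi_0|_C$; and because $\varphi_0$ is necessarily a deck transformation of $g$ (from $\psi_0\circ g\circ \varphi_0^{-1} = g$ with $\psi_0 = \mathrm{id}$), one gets $\psi_t(g(c)) = g(\varphi_t(c)) = g(\varphi_0(c)) = g(c)$ for every $c\in C$. By continuity of $\psi_t$ this extends to $\overline{CV(g)}$.

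The main obstacle will be showing $\psi_t$ also fixes the asymptotic values (and their accumulations). For $v\in AV(g)$ with an asymptotic path $\gamma$ satisfying $\gamma(s)\to\infty$ and $g(\gamma(s))\to v$, the image $\varphi_t\circ\gamma$ also escapes to infinity (since $\varphi_t$ fixes $\infty$), and $g(\varphi_t(\gamma(s))) = \psi_t(g(\gamma(s))) \to \psi_t(v)$, whence $\psi_t(v)\in AV(g)$; hence $t\mapsto \psi_t(v)$ is a continuous path in $S(g)$ starting at $v$. To pin it to $v$, I would invoke a tract analysis: when $v$ is isolated in $S(g)$ (e.g.\ a logarithmic singularity in the sense of Iversen), a small punctured neighborhood $D\setminus\{v\}$ lies entirely in $\Cx\setminus S(g)$, and the components of $g^{-1}(D\setminus\{v\})$ form a discrete collection of asymptotic tracts. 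Then $\varphi_t$ maps tracts over $v$ to tracts over $\psi_t(v)$ via a combinatorial assignment that, being valued in a discrete set and continuous in $t$, must be constant, forcing $\psi_t(v) = v$. For non-isolated asymptotic values, one argues that $v$ is a limit of points already known to be fixed (critical values and isolated asymptotic values), and $\psi_t(v) = v$ follows by continuity of $\psi_t$. The delicate step — and the bulk of the technical work — is making the tract/monodromy argument precise when asymptotic values are not isolated and ensuring the set of already-fixed points approximates them densely in $S(g)$.
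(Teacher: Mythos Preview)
Your reduction to showing $\psi_t|_{S(g)} = \mathrm{id}$, the isotopy via $s\mapsto\psi_{st}$, and the treatment of critical values are all correct and match the paper's argument closely. The gap is in your handling of non-isolated asymptotic values.

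You propose to show that every non-isolated asymptotic value is a limit of points already known to be fixed (critical values and isolated asymptotic values). This is not true in general, and the paper does not attempt it. There are entire functions whose singular set is a perfect set consisting entirely of asymptotic values, with no critical values at all; for such $g$ your density argument yields nothing. Even when critical values are present, there is no mechanism forcing them to accumulate on every non-isolated asymptotic value. So the approach ``isolated case via tracts, then close up by density'' cannot be completed as stated.

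The paper takes a different route that works uniformly for all asymptotic values, isolated or not. Given an asymptotic value $v$ with asymptotic path $\gamma$ lying in a component $U_n$ of $g^{-1}(V)$, one transfers the problem to $\D$ via a Riemann map $\phi\colon\D\to U_n$. Lindel\"of's theorem guarantees $\phi^{-1}(\gamma)$ lands at a point $\zeta\in\partial\D$, and the Lehto--Virtanen theorem then shows $g\circ\phi$ has a \emph{non-tangential limit} $v$ at $\zeta$. Since $\varphi_t$ varies continuously and is quasiconformal, the paths $\varphi_t\circ\gamma$ all represent the same access to infinity in $U_n$ and land (after pulling back by $\phi$) non-tangentially at a fixed boundary point; the non-tangential limit of $g\circ\phi$ there is $v$ for all $t$ (using $\psi_0=\mathrm{id}$), forcing $\psi_t(v)=v$. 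The point is that the \emph{access to infinity} serves as the discrete combinatorial invariant, not a tract over an isolated puncture, and non-tangential limits rigidify the value $v$ attached to it. You should replace your density strategy with this boundary-behaviour argument.
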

\begin{proof}
Because $\psi_t$ moves continuously with $t$ and $\psi_0$ is the identity, it suffices to show that the map $t\mapsto \psi_t(z)$ is constant for $z\in S(g)$. To this end, notice first that (since $\psi_t$ and $\varphi_t$ are homeomorphisms of $\Chat$) $\psi_t(CV(g)) = CV(g)$, where $CV(g)$ denotes the critical values of $g$, and $\psi_t(AV(g)) = AV(g)$, where $AV(g)$ denotes the asymptotic values of $g$. Likewise, $\varphi_t(Crit(g)) = Crit(g)$, where $Crit(g)$ denotes the critical points of $g$. Since critical points are discrete in $\Cx$ and $\varphi_t$ moves continuously with $t$, it follows that $\varphi_t|_{Crit(g)}$ is constant in $t$, and therefore $\psi_t|_{CV(g)} = \psi_0|_{CV(g)} = Id$ for every $t\in[0, 1]$.

For dealing with the asymptotic values, we will need to look closer at ``preimages'' of asymptotic values. Let $v\in AV(g)$, and take $V\subset\Cx$ a small neighbourhood of $v$. If there are only finitely many asymptotic values of $g$ in $V$, then since $\psi_t$ moves continuously with $t$ and sends $AV(g)$ into $AV(g)$ we conclude that $\psi_t|_{AV(g)\cap V} = Id$ for all $t\in[0, 1]$. Thus, we can assume that $V$ contains infinitely many singular values of $g$. Since $g$ is entire, $g^{-1}(V)$ consists of at most countably many connected components, say $U_n$, and they are all simply connected. By the definition of an asymptotic value, each asymptotic value $v_\alpha\in V$ gives rise to an asymptotic path $\gamma_\alpha$ contained in one of the $U_n$, meaning that each $v_\alpha$ gives rise to at least one corresponding access to infinity in at least one of the components $U_n$. Again because $g$ is entire, each connected component of $\Cx\setminus U_n$ must contain at least one component of $\Cx\setminus V$; since there are at most countably many of those, it follows that each $U_n$ has at most countably many accesses to infinity.

Now, if $\psi_t|_{AV(g)\cap V}$ is not the identity, then $\varphi_t$ must ``undo'' the changes wrought by $\psi_t$. We claim that it can only do that by interchanging the corresponding accesses to infinity in the corresponding components $U_n$. To see that, fix any value of $n$, let $\phi\colon\D\to U_n$ be a Riemann map, and let $\gamma\subset U_n$ be an asymptotic path corresponding to $v$ (i.e., a curve $\gamma\colon[0, 1)\to U_n$ such that $\gamma(s)\to\infty$ and $g\circ\gamma(s)\to v$ as $s\nearrow 1$). The holomorphic function $\tilde g\colon\D\to\Cx$ given by $g\circ\phi$ satisfies $\tilde g\circ \tilde\gamma(s)\to v$ as $s\nearrow 1$, where $\tilde\gamma = \phi^{-1}(\gamma)$. By a classical theorem of Lindelöf \cite[Theorem I.2.2]{CG93}, $\tilde\gamma$ satisfies $\tilde\gamma(s)\to \zeta\in\partial\D$ as $s\nearrow 1$ for some $\zeta\in\partial\D$. Another classical theorem of Lehto and Virtanen \cite[Section 4.1]{Pom92} states that if any holomorphic function $\tilde g\colon\D\to\Cx$ omitting at least three points and satisfies $\tilde g\circ \alpha(s)\to c\in\Cx$ as $s\nearrow 1$, where $\alpha\colon[0, 1]\to\D$ is a curve landing at some point $e^{i\theta}\in\partial\D$, then $\tilde g$ has what is called a \textit{non-tangential limit} $c$ at $e^{i\theta}$; this means that $\tilde g$ tends to $c$ along any curve $\beta$ landing at $e^{i\theta}$ not tangentially to the unit circle. In particular, $\tilde g$ has a non-tangential limit $v$ at $\zeta$, and we can assume without loss of generality that $\gamma$ is chosen so that $\tilde\gamma$ lands non-tangentially at $\zeta$. Now, since $\varphi_t$ are quasiconformal maps depending continuously on $t$, the curves $\gamma_t = \varphi_t\circ\gamma$ must all define the same access to infinity in $U_n$, meaning that the curves $\tilde\gamma_t := \phi^{-1}(\gamma_t)$ all land on the same point $\zeta'\in\partial\D$ by the Correspondence Theorem (for more on accesses to infinity and the Correspondence Theorem, see \cite{BFJK17}). Moreover, since the maps $\tilde\varphi_t = \phi\circ\varphi_t\circ\phi^{-1}$ are quasiconformal, the curves $\tilde\gamma_t$ must land non-tangentially at $\zeta_1$. But by Lehto and Virtanen's theorem, $\tilde g$ has a non-tangential limit $v$ at $\zeta_1$ too (because $\psi_0$ is the identity, so $\tilde g\circ\tilde\gamma_0(s)\to v$ as $s\nearrow 1$); it follows that $\tilde g\circ\tilde\gamma_t(s)\to v$ as $s\nearrow 1$ for all $t\in[0, 1]$, and the same is true for $g$ and $\gamma_t$.

This proves our claim that $\varphi_t$ can only undo the changes made by $\psi_t$ by exchanging accesses to infinity. However, because this must be done continuously in $t$, it follows that $\varphi_t$ is incapable of enacting this permutation. Thus, $\psi_t|_{AV(g)}$ is the identity. Finally, since $S(g) = \overline{\{CV(g)\cup  AV(g)\}}$, the lemma follows by continuity.
\end{proof}
Theorem \ref{thm:unfgeneral}(iii) can be obtained from Lemma \ref{lem:isotopy} as follows. By the proof of Lemma~\ref{lem:phiM} we can write  $g_t=\psi_t\circ f\circ\varphi_t^{-1}$, but this lemma does not guarantee that $\psi_t$ and $\varphi_t$ can be chosen to be continuous in $t$. To show this, let $\lambda_0\in T_f$ be such that $\Phi_{\lambda_0} = g_0 = \psi_0\circ f\circ\varphi_0^{-1}$. Since each $g_t$ is quasiconformally equivalent to $f$, we claim that this induces a \textit{unique} continuous path $\gamma\colon[0, 1]\to T_f$ such that $\gamma(0) = \lambda_0$ and $\Phi_{\gamma(t)} = g_t$. Indeed, by continuity of the singular values \cite[Lemma 1]{KK97}, the path $t\mapsto g_t$ induces a continuous motion $H\colon [0, 1]\times F\to\Chat$ of $F = S(g_0)\cup\{\infty\}$, which, since each $S(g_t) = H(t, S(g_0))$ is the image of $S(g_0)$ under \textit{some} quasiconformal self-map of $\Cx$ by hypothesis, yields a unique continuous path $\tilde\gamma\colon[0, 1]\to T(F)\times\Cx^*\times\Cx$. Likewise, by the implicit function theorem, there exist unique continuous functions $t\mapsto a_2(t)$ and $t\mapsto b_2(t)$ tracking the pre-images of $H(t, 0)$ and $H(t, 1)$ under $g_t$ with $a_2(0) = \varphi_0(z_0)$ and $b_2(0) = \varphi_0(1)$. This, finally, yields a continuous path $\gamma\colon[0, 1]\to T_f$ with the desired properties, and guarantees the continuity of $\psi_t$ and $\varphi_t$ with respect to $t$.

Now, we prove uniqueness of this path. If $\gamma'\colon[0, 1]\to T_f$ is another path satisfying the required conditions, then we can write $g_t$ as $g_t = \Phi_{\gamma'(t)} = \psi'_t\circ g_0\circ(\varphi'_t)^{-1}$, where $\psi'_t$ and $\varphi'_t$ are the maps constructed in Lemma \ref{lem:phiM}. Combining this with the path $\gamma$ that we already have, and writing $g_t = \Phi_{\gamma(t)} = \psi_t\circ g_0\circ \varphi_t^{-1}$, we get $g_0 = (\psi_t^{-1}\circ\psi_t')\circ g_0\circ(\varphi_t\circ\varphi_t')^{-1}$, $t\in[0, 1]$. By Lemma \ref{lem:isotopy}, $\psi_t$ and $\psi_t'$ are isotopic relative $F$ for all $t\in[0, 1]$. In particular, both define the same continuous motion of $F$, and so by \cite[Lemma 12.2]{Mit00} the lift to $T_f$ is unique.

This completes the proof of Theorem \ref{thm:unfgeneral}.
Next, let us deduce Theorem \ref{thm:unf}. Since $T_f$ is a manifold and $M_f$ is a Hausdorff space, we have that $\Phi$ is a covering map if and only if it is locally injective and has the path lifting property (see \cite[Theorem 4.19]{For81}). Therefore, the fact that $\Phi$ is a covering will be proved once we show the following:
\begin{lemma}
Assume that $\Cx\setminus S(f)$ is non-empty. Then, for every $\lambda = (P_F(\mu), a_1, b_1, a_2, b_2)\in T_f$, there exists a neighbourhood $\Lambda\subset T_f$ of $\lambda$ such that $\Phi|_\Lambda$ is injective.
\end{lemma}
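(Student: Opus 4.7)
Plan: Suppose $\lambda_1, \lambda_2 \in T_f$ are close to $\lambda = ([\mu], a_1, b_1, a_2, b_2)$ with $\Phi(\lambda_1) = \Phi(\lambda_2) = g$. Writing $g = \psi_i \circ f \circ \varphi_i^{-1}$ with $\psi_i = A_{1,i} \circ \psi^{\mu_i}$ and $\varphi_i = A_{2,i} \circ \varphi^{\mu_{\lambda_i}}$ as in Lemma \ref{lem:phiM}, continuity of that construction lets me shrink the neighbourhood so $\psi_i, \varphi_i$ are arbitrarily close to $\psi, \varphi$ in the compact-open topology, while having uniformly bounded dilatation (inherited from a bounded neighbourhood of $[\mu]$ in $T(F)$). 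Setting $\alpha := \psi_2^{-1} \circ \psi_1$ and $\beta := \varphi_2^{-1} \circ \varphi_1$, both close to $\mathrm{id}$, the identity $\psi_1 \circ f \circ \varphi_1^{-1} = \psi_2 \circ f \circ \varphi_2^{-1}$ rearranges to $\alpha \circ f = f \circ \beta$, exhibiting $(\alpha, \beta)$ as a self-equivalence of $f$ close to the identity; the goal is to deduce $\alpha = \beta = \mathrm{id}$.

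The heart of the argument is to show $\alpha|_F = \mathrm{id}$. For each $c \in \mathrm{Crit}(f)$, both $\varphi_1(c), \varphi_2(c) \in \mathrm{Crit}(g)$, and since $\mathrm{Crit}(g)$ is discrete and the $\varphi_i$ are uniformly equicontinuous with $\varphi_i(c) \to \varphi(c)$, we get $\varphi_1(c) = \varphi_2(c) = \varphi(c)$, hence $\psi_1(f(c)) = g(\varphi_1(c)) = g(\varphi_2(c)) = \psi_2(f(c))$, so $\psi_1|_{\mathrm{CV}(f)} = \psi_2|_{\mathrm{CV}(f)}$. For asymptotic values, I join $\lambda_1$ to $\lambda_2$ by a continuous path in $T_f$ to obtain a continuous family $(\psi_s, \varphi_s)$, and invoke the Lindel\"of--Lehto--Virtanen analysis of Lemma \ref{lem:isotopy} \emph{verbatim}: continuity of $\varphi_s$ together with the fact that accesses to $\infty$ correspond via Riemann maps to discretely determined landing points forces $\varphi_s$ to induce a constant correspondence of accesses, whence $\alpha|_{\mathrm{AV}(f)} = \mathrm{id}$; taking closures yields $\alpha|_F = \mathrm{id}$.

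Once $\alpha|_F = \mathrm{id}$, matching the affine normalisations is direct: since $0, 1 \in F$, reading $\psi_i(0) = b_{1,i}$ and $\psi_i(1) = a_{1,i} + b_{1,i}$ gives $(a_{1,1}, b_{1,1}) = (a_{1,2}, b_{1,2})$. A covering-homotopy lift parallel to that in the proof of Lemma \ref{lem:push}, which crucially uses $\Cx \setminus S(f) \neq \emptyset$, produces $\varphi_1|_{f^{-1}(F)} = \varphi_2|_{f^{-1}(F)}$; evaluating at $z_0 \in f^{-1}(0)$ and $z_1 \in f^{-1}(1)$ matches $(a_{2,1}, b_{2,1}) = (a_{2,2}, b_{2,2})$. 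All affine parts now coinciding, we are reduced to $\psi^{\mu_1}|_F = \psi^{\mu_2}|_F$, so $\eta := (\psi^{\mu_2})^{-1} \circ \psi^{\mu_1}$ is a quasiconformal self-homeomorphism of $\Chat$ close to $\mathrm{id}$ and fixing $F$ pointwise. To conclude $[\mu_1] = [\mu_2]$ in $T(F)$, I use the Banach analytic chart on $T(F)$ supplied by the split submersion $P_F$ of Lemma \ref{lem:TE}: near $[\mathrm{id}]$, the universal motion $\Psi_F$ of Theorem \ref{lem:mitra} is locally injective, so $\eta$ close to $\mathrm{id}$ forces $[\eta] = [\mathrm{id}]$, i.e.\ isotopy relative to $F$.

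The main obstacle is the final step. Globally the restriction $T(F) \to (F \to \Chat)$ can have positive-dimensional fibres arising from the mapping class group of $\Chat \setminus F$, so one cannot simply read off $[\mu_1] = [\mu_2]$ from boundary agreement; the argument must lean on the local chart structure of $T(F)$ (Lemma \ref{lem:TE}) together with the uniqueness in Bers--Royden (Theorem \ref{lem:lambda2}) to ensure that the fibre over the identity boundary motion is locally trivial. A second delicate point is the uniformity in $c \in \mathrm{Crit}(f)$ in Step 1 when $\mathrm{Crit}(f)$ is unbounded; this is resolved by appealing to the uniform equicontinuity of quasiconformal families with bounded dilatation and compensating at $\infty$ via the normalisations built into $T_f$.
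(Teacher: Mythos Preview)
Your proposal has a genuine gap at the step you yourself flag as the main obstacle. Granting $\psi^{\mu_1}|_F = \psi^{\mu_2}|_F$, you need that $\eta = (\psi^{\mu_2})^{-1}\circ\psi^{\mu_1}$ is isotopic to the identity rel $F$; your fix is to invoke ``local injectivity of $\Psi_F$'', but this amounts to the assertion that the mapping class group of $(\Chat,F)$ has discrete orbits on $T(F)$, which for infinite $F$ is neither stated in the paper nor obvious (for instance, Dehn twists around small curves encircling two nearby points of $F$ are $C^0$-close to the identity yet nontrivial). The appeal to Lemma~\ref{lem:TE} and Theorem~\ref{lem:lambda2} is vague and does not establish it. There is also a second, earlier gap: joining $\lambda_1$ to $\lambda_2$ by a path in $T_f$ yields a family $(\psi_s,\varphi_s)$ along which $\psi_s\circ f\circ\varphi_s^{-1}$ is \emph{not} constant, so the hypothesis of Lemma~\ref{lem:isotopy} fails and its Lindel\"of--Lehto--Virtanen argument cannot be invoked ``verbatim'' for the asymptotic values.

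The paper sidesteps both issues by a different construction that produces a genuine one-parameter family of \emph{self}-equivalences of $f$. After reducing to the basepoint, take $(\psi_n,\varphi_n)$ near the identity with $\psi_n\circ f\circ\varphi_n^{-1}=f$. Choose a round disc $D\Subset\Cx\setminus S(f)$ and, by quasiconformal interpolation in an annulus, modify $\psi_n$ to a map $\psi_{n,1}$ that equals the identity on a smaller disc $D'\Subset D$; this modification is isotopic to $\psi_n$ rel $F$ since it is supported away from $S(f)$, and by Lemma~\ref{lem:push} the resulting pair still satisfies $\psi_{n,1}\circ f\circ\varphi_{n,1}^{-1}=f$. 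Now scale the Beltrami coefficient of $\psi_{n,1}$ by $t\in[0,1]$, again interpolating so that $\psi_{n,t}|_{D'}=\mathrm{id}$; the corresponding $\varphi_{n,t}$ is then forced to be the identity on a conformal preimage $D''\subset f^{-1}(D')$, so the entire function $f_t:=\psi_{n,t}\circ f\circ\varphi_{n,t}^{-1}$ agrees with $f$ on the open set $D''$ and hence, by the identity principle, equals $f$ for every $t$. This is exactly the continuous family of self-equivalences to which Lemma~\ref{lem:isotopy} applies, and it delivers the isotopy rel $F$ directly---no appeal to discreteness of any mapping class action is needed. The hypothesis $\Cx\setminus S(f)\neq\emptyset$ is used precisely to find the disc $D$ where this surgery and the identity principle can be run.
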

\begin{proof}
First, notice that for any $g = \psi_g\circ f\circ\varphi_g^{-1}\in M_f$, a pair of quasiconformal homeomorphisms $\psi$ and $\varphi$ such that $\psi\circ g\circ\varphi^{-1} = g$ gives rise to a pair $\psi' = \psi_g^{-1}\circ\psi\circ\psi_g$ and $\varphi' = \varphi_g^{-1}\circ\varphi\circ\varphi_g$ such that $\psi'\circ f\circ(\varphi')^{-1}$, and vice-versa. Thus, we can assume without loss of generality that $g = f$.

With that out of the way, assume that our claim is false; then, there exists a sequence $(\lambda_n)_n\subset T_f$ converging to the basepoint $(0, 1, 0, 1, 0)$ such that $\Phi(\lambda_n) = f$. By the construction of $\Phi$, this gives rise to sequences $\psi_n$ and $\varphi_n$ of quasiconformal maps of $\Cx$ such that $\psi_n\circ f\circ \varphi_n^{-1} = f$, $\psi_n$ and $\varphi_n$ converge locally uniformly to the identity as $n\to+\infty$, and $\psi_n$ is not isotopic to the identity relative to $F = S(f)\cup\{\infty\}$ for any $n$. However, we claim that, for any sufficiently large $n$, one can construct a family of quasiconformal maps $\psi_{n,t}$ depending continuously on $t\in[0, 1]$, with $\psi_{n,1}$ isotopic to $\psi_n$ relative $F$, $\psi_{n,0}(z) = z$ for all $z\in\Cx$, and such that $\psi_{n,t}\circ f\circ\varphi_{n,t}^{-1} = f$ for all $t\in[0, 1]$. If we can do that, then by Lemma \ref{lem:isotopy}, $\psi_n$ is isotopic to the identity relative $F$, which is a contradiction -- completing the proof.

Now, to construct this family of quasiconformal maps. Let $D\subset\Cx$ be a (round) disc compactly contained in $\Cx\setminus S(f)$. Then, for all large $n$, $\psi_n(\Cx\setminus D)$ does not intersect $D'$, where $D'\Subset D$ is another (round) disc that does not depend on $n$. We perform quasiconformal interpolation in the annulus $\overline{D}\setminus D'$ (see e.g. \cite[Proposition 2.32]{BF14}), obtaining a quasiconformal map $\psi_{n,1}$ such that $\psi_{n,1}(z) = \psi_n(z)$ for $z\in\Cx\setminus D$ and $\psi_{n,1}(z) = z$ for $z\in\overline{D'}$. We define $\varphi_{n,1}$ to be an integrating map of $f^*\mu_{n,1}$, where $\mu_{n,1}$ is the Beltrami coefficient of $\psi_{n,1}$, normalised so that $\varphi_{n,1}(z_0) = \varphi_n(z_0)$ and $\varphi_{n,1}(z_1) = \varphi_n(z_1)$ (recall the definition of $z_0$ and $z_1$ from the proof of Lemma \ref{lem:phiM}). Because we have only made small changes outside of $S(f)$, we have by Lemma \ref{lem:push} that $\psi_{n,1}\circ f\circ\varphi_{n,1}^{-1} = f$ for all large $n$.

Next, notice that, since $\overline{D}$ is contained in $\Cx\setminus S(f)$, its preimage consists of countably many compact sets; in particular, we can find a compact set $D''\subset f^{-1}(D)$ such that $f$ maps $D''$ conformally onto $D'$. We claim that, for all large $n$, $\varphi_{n,1}$ is the identity on $D''$; indeed, because $\psi_{n,1}\circ f\circ\varphi_{n,1}^{-1} = f$, the map $\varphi_{n,1}$ must permute the components of $f^{-1}(D')$, and since $\varphi_n(z)\to z$ locally uniformly as $n\to+\infty$ it follows that for all large $n$ we must have $\varphi_{n,1}(D'') \subset D''$. Hence, for $z\in D''$, we have $f(z) = f\circ\varphi_{n,1}(z)$, as $\psi_{n,1}(z)$ for $z\in D' = f(D'')$, and so $\varphi_{n,1}(z) = z$ for $z\in D''$.

Finally, let $\psi_{n,t}$, $t\in[0, 1]$, be the integrating map of $\mu_{n,t} := t\mu_{n,1}$ (where, recall, $\mu_{n,1}$ is the Beltrami coefficient of $\psi_{n,1}$) normalised so that $\psi_{n,t}(0) = \psi_n(0)$ and $\psi_{n,t}(1) = \psi_n(1)$. Likewise, take $\varphi_{n,t}$ to be the integrating map of $f^*(\mu_{n,t})$ such that $\varphi_{n,t}(z_0) = \varphi_n(z_0)$ and $\varphi_{n,t}(z_1) = \varphi_n(z_1)$. Furthermore, by using quasiconformal surgery and the fact that the integrating map varies continuously with its Beltrami coefficient, we can modify $\psi_{n,t}$ and $\varphi_{n,t}$ so that they are the identity when restricted to $D'$ and $D''$ (respectively), and so $f_t(z) := \psi_{n,t}\circ f\circ\varphi_{n,t}^{-1}(z) = f(z)$ for $z\in D''$; by the identity principle, $f_t = f$ for all $t\in[0, 1]$. It is clear from the construction that $\psi_{n,t}$ and $\varphi_{n,t}$ satisfy the other properties outlined above, and so the proof is complete.
\end{proof}

Finally, we complete the proof of Theorem \ref{thm:unf}.
\begin{proof}[Proof of Theorem \ref{thm:unf}]
We have already shown that $\Phi\colon T_f\to M_f$ is a covering, and properties (i) and (ii) in Theorem \ref{thm:unf} follow from the corresponding properties in Theorem \ref{thm:unfgeneral}. Thus, we only have to give $M_f$ its complex structure -- in other words, a manifold structure with holomorphic transition maps.

To that end, let $g\in M_f$. Since $\Phi$ is a covering, there exists a neighbourhood $V\subset M_f$ of $g$ such that $\Phi^{-1}(V)$ is a disjoint union of neighbourhoods $U_n\subset T_f$, and $\Phi|_{U_n}$ is a homeomorphism for each $n$. Let $\psi_n$ denote the inverse of the restriction $\Phi|_{U_n}\colon U_n\to V$; these will be our charts. To see that the transition charts are holomorphic, notice that they are given by $\psi_n\circ\Phi|_{U_m}\colon U_m\to U_n$, and can be written ``component-wise'' as $U_m\ni \lambda \mapsto (\mu(\lambda), a_1(\lambda), b_1(\lambda), a_2(\lambda), b_2(\lambda))\in U_n$. The first three components, $\mu(\lambda)$, $a_1(\lambda)$, and $b_1(\lambda)$, are holomorphic functions of $\lambda$ by Theorem \ref{lem:mitra}, since $\Phi$ defines a holomorphic motion $(\lambda, z)\mapsto \Phi_{\lambda}(z)$ of $S(f)$. The last two components, $a_2(\lambda)$ and $b_2(\lambda)$, are holomorphic by the argument of Buff and Ch\'eritat \cite[p. 21]{BC04}, which shows that the map $\lambda\mapsto \varphi_\lambda(z)$ is holomorphic for $z\in f^{-1}(S(f))$. By Hartogs' Theorem (Theorem \ref{lem:hartogs}), the transition map $\lambda\mapsto (\mu(\lambda), a_1(\lambda), b_1(\lambda), a_2(\lambda), b_2(\lambda))$ is holomorphic, and thus the atlas given here defines a complex structure for $M_f$.
\end{proof}

We end this section with a proof of Corollary \ref{cor:natfamily}, which is an immediate consequence of Theorem \ref{thm:unf}, and whose proof is similar to the argument above.
\begin{proof}[Proof of Corollary \ref{cor:natfamily}]
First, the fact that $\psi_\lambda(z)$ is already defined on $\Cx$ and moves holomorphically with $\lambda$ for $z\in S(f)$ defines a holomorphic function $\Psi\colon M\to T(F)\times(\Cx^*\times\Cx)$ (recall Theorem \ref{lem:mitra}); to extend this to a holomorphic function $F\colon M\to T_f = T(F)\times(\Cx^*\times\Cx)^2$, recall that by the usual argument of Buff and Ch\'eritat \cite[p. 21]{BC04} the map $\lambda\mapsto \varphi_\lambda(z)$ is holomorphic for $z\in f^{-1}(S(f))$. We can thus define $f_\lambda' = \Phi_{F(\lambda)} = \psi_\lambda'\circ f\circ (\varphi_\lambda')^{-1}$, and we are left to show that $f_\lambda' = f_\lambda$. This, however, follows immediately from the fact that $\psi_\lambda$ and $\psi_\lambda'$ are two different quasiconformal trivializations of the same holomorphic motion of $F = S(f)\cup\{\infty\}$, and thus isotopic rel $F$ by \cite[Theorem C]{Mit00}; by Lemma \ref{lem:push}, $f_\lambda = f_\lambda'$.
\end{proof}

\section{Distortion sequences for wandering domains}\label{sec:DS}
In this section, we define distortion sequences for entire functions with simply connected wandering domains and prove Theorem \ref{thm:DS}.
\begin{definition}\label{def:DS}
Let $f$ be an entire function with a simply connected wandering domain $U$. Take $z_0\in U$, and let $z_n := f^n(z_0)$. For $n\geq 0$, let $\psi_n\colon\D\to U_n$ be Riemann maps with $\psi_n(0) = z_n$. For $n\in\N$, define $g_n\colon\D\to\D$ by $g_n(z) = \psi_n^{-1}\circ f\circ\psi_{n-1}(z)$, so that $g_n(0) = 0$. The sequence $(\alpha_n(f, z_0))_{n\in\N}$ given by
\[ \alpha_n(f, z_0) := g_n'(0) \]
is called a \emph{distortion sequence of $f$ at $z_0$}.
\end{definition}
\begin{remark}
We use the term ``distortion'' because, by the Schwarz--Pick lemma, each $\alpha_n(f, z)$ is related to a quantity called the \textit{hyperbolic distortion} of $f\colon U_{n-1}\to U_n$ at $f^{n-1}(z)$; see e.g. \cite{BM06} for the definition of hyperbolic distortion, or \cite{BEFRS19} for its use in this context. 
\end{remark}

Let us take a moment to discuss how the distortion sequence depends on our choice of $z\in U$. With the setup of Definition \ref{def:DS}, we can, for any choice of $z\in U$, obtain Riemann maps $\tilde\psi_n\colon U_n\to\D$ with the property that $\tilde\psi_n(f^n(z)) = 0$ by post-composing $\psi_n$ with a M\"obius transformation preserving $\D$ and mapping $\psi_n(f^n(z))$ to the origin. Because M\"obius self-maps of $\D$ move real-analytically with their root, we conclude that, for any $n\in\mathbb{N}$, the map $U\ni z\mapsto \alpha_n(f, z)\in\D$ is real-analytic. This does not mean, however, that the map $U\ni z\mapsto (\alpha_n(f, z))_{n\in\mathbb{N}}\in\ell^\infty$ is real-analytic; see the discussion after Lemma \ref{lem:linfty}. Nevertheless, since the distortion sequence of $f$ at $z$ is closely related to the hyperbolic distortion of $f$ at $f^n(z)$, we can apply the Schwarz--Pick lemma for hyperbolic distortion (see \cite[Theorem 11.2]{BM06}) and the fact that $f^n\colon U\to U_n$ does not expand the hyperbolic metric to conclude that $U\ni z\mapsto (\alpha_n(f, z))_{n\in\mathbb{N}}\in\ell^\infty$ is at the very least continuous.

Notice also that the distortion sequence of $f$ at $z_0$ is not unique, but any two distortion sequences $(\alpha_n)_{n\in\N}$ and $(\beta_n)_{n\in\N}$ at $z_0$ satisfy $|\alpha_n| = |\beta_n|$ for all $n\in\N$. By Theorem \ref{thmout:befrs}, a distortion sequence of $f$ at $z_0\in U$ is closely related to the internal dynamics of $U$. By the same token, distortion sequences at different points $z\in U$ must have the same ``qualitative behaviour'', meaning here that the series $\sum_{n\geq 1} (1 - |\alpha_n(f, z)|)$ behaves in the same way for every $z\in U$.

As discussed in Section \ref{sec:intro}, if $J(f_\lambda)$ moves holomorphically in some natural family $(f_\lambda)_{\lambda\in M}$ with holomorphic motion $H$, then $f_\lambda$ has a simply connected wandering domain $U_\lambda = H(\lambda, U)$, and $U_{\lambda,n} = f_\lambda^n(U_\lambda) = H(\lambda, U_n)$. To understand the distortion sequences of $f_\lambda$ at $H(\lambda, z_0)$, we must understand the Riemann maps $\zeta_{\lambda,n}$ of $U_{\lambda,n}$. We have:
\begin{lemma}\label{lem:riemannhol}
Let $\Omega\subsetneq\Cx$ be a simply connected domain, and let $H\colon M\times\Omega\to\Cx$ be a holomorphic motion of $\Omega$ over some Banach analytic manifold $M$ with basepoint $\lambda_0\in M$. Let $p\in\Omega$, and let $\zeta\colon\D\to\Omega$ be a Riemann map with $\zeta(0) = p$. Then, there exists a holomorphic motion $\eta\colon M\times\overline{M}\times\D\to\Omega_{\lambda,\gamma}' := \eta(\lambda, \gamma)(\D)\subset\Cx$ such that:
\begin{enumerate}[(1)]
    \item For all $(\lambda, \gamma)\in M\times\overline{M}$, $\eta(\lambda, \gamma)(0) = 0$;
    \item For each $\lambda\in M$, $\eta(\lambda, \overline\lambda)(\D) = \D$; and
    \item For all $z\in\D$, $\eta(\lambda_0, \overline{\lambda_0})(z) = z$.
\end{enumerate}
Furthermore, for $(\lambda, \gamma)\in M\times\overline{M}$, let $Z(\lambda, \gamma)\colon\Omega_{\lambda, \gamma}'\to\Cx$ be given by $Z(\lambda, \gamma) = H(\lambda, \zeta\circ\eta(\lambda, \gamma)^{-1}(z))$. Then:
\begin{enumerate}[(i)]
    \item For every $(\lambda, \gamma)\in M\times\overline M$, $Z(\lambda, \gamma)\colon\Omega_{\lambda,\gamma}'\to\Omega_\lambda := H(\lambda, \Omega)$ is a biholomorphism;
    \item For each $\lambda\in M$, the function $\zeta_\lambda(z) := Z(\lambda, \overline\lambda)(z)$ is a Riemann map of $\Omega_\lambda$;
    \item For all $(\lambda, \gamma)\in M\times\overline M$, $Z(\lambda, \gamma)(0) = H(\lambda, p)$;
    \item For all $z\in\D$, $Z(\lambda, \overline{\lambda_0})(z) = \zeta(z)$; and
    \item For all $(\lambda^*, \gamma^*, z)\in M\times\overline M\times \Omega_{\lambda^*,\gamma^*}'$ and any neighbourhood $\Lambda\subset M\times\overline M$ of $(\lambda^*, \gamma^*)$ such that $z\in\Omega_{\lambda,\gamma}'$ for $(\lambda, \gamma)\in \Lambda$, the map $\Lambda\ni (\lambda, \gamma)\mapsto Z(\lambda, \gamma)(z)\in \Cx$ is holomorphic.
\end{enumerate}
\end{lemma}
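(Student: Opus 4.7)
The strategy is to turn the disc integrating map's inherently real-analytic parameter dependence into a genuine holomorphy by doubling the parameter space to $M\times\overline M$, via a Bers-style Schwarz reflection trick.

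First, let $\nu_\lambda\in M(\D)$ be the Beltrami coefficient of the quasiconformal map $z\mapsto H(\lambda,\zeta(z))$ (quasiconformality by Lemma \ref{lem:lambda1}). Because $H$ is holomorphic in $\lambda$ and $\zeta$ is conformal, the chain rule for Beltrami coefficients gives $\nu_\lambda(w)=\mu_{H(\lambda,\cdot)}(\zeta(w))\cdot\overline{\zeta'(w)}/\zeta'(w)$, and Lemma \ref{lem:MRMTconv} shows that $\lambda\mapsto\nu_\lambda$ is a holomorphic map $M\to L^\infty(\D)$ vanishing at $\lambda_0$.

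Second, I would define $\tilde\nu\colon M\times\overline M\to L^\infty(\Cx)$ by extending $\nu_\lambda$ to $\Cx$ via classical Schwarz reflection, but with the reflected part reading the independent parameter $\gamma\in\overline M$ rather than $\lambda$:
\[ \tilde\nu(\lambda,\gamma)(z):=\begin{cases}\nu_\lambda(z),& |z|<1,\\ 0,& |z|=1,\\ \overline{\nu_{\lambda(\gamma)}(1/\bar z)}\,z^2/\bar z^2,& |z|>1,\end{cases} \]
where $\lambda(\gamma)\in M$ is the underlying point of $\gamma\in\overline M$. Pointwise complex conjugation converts the $M$-antiholomorphic map $\lambda\mapsto\overline{\nu_\lambda(w)}$ into an $\overline M$-holomorphic map in $\gamma$, so $\tilde\nu$ is jointly holomorphic on $M\times\overline M$; and $\tilde\nu(\lambda,\overline\lambda)$ is exactly the classical Schwarz extension of $\nu_\lambda$. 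I then set $\eta(\lambda,\gamma):=\varphi^{\tilde\nu(\lambda,\gamma)}$, the normalised integrating homeomorphism of $\Cx$ fixing $0,1,\infty$ (Theorem \ref{lem:MRMT}), and restrict to $\D$. Theorem \ref{lem:MRMT}(2) and holomorphy of $\tilde\nu$ give analyticity of $\eta$ in $(\lambda,\gamma)$; the Schwarz symmetry of $\tilde\nu(\lambda,\overline\lambda)$ forces $\eta(\lambda,\overline\lambda)$ to commute with $z\mapsto 1/\bar z$ and thus to preserve $\D$; and $\tilde\nu(\lambda_0,\overline{\lambda_0})=0$ gives $\eta(\lambda_0,\overline{\lambda_0})=\mathrm{id}$. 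Properties (1)--(3) follow.

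Finally, setting $Z(\lambda,\gamma)(z):=H(\lambda,\zeta(\eta(\lambda,\gamma)^{-1}(z)))$ on $\Omega'_{\lambda,\gamma}=\eta(\lambda,\gamma)(\D)$, the crucial observation is that on $\D$ both $\eta(\lambda,\gamma)$ and $H(\lambda,\zeta(\cdot))$ have the same Beltrami coefficient $\nu_\lambda$ (since $\tilde\nu|_\D=\nu_\lambda$ regardless of $\gamma$), so $(H\circ\zeta)\circ\eta^{-1}$ is conformal on $\Omega'_{\lambda,\gamma}$, yielding the biholomorphism (i); items (ii)--(iv) are direct substitutions. For (v), writing $w(\lambda,\gamma):=\eta(\lambda,\gamma)^{-1}(z)$ and differentiating the implicit relation $\eta(\lambda,\gamma,w)=z$ gives $w_{\bar\lambda}=-\nu_\lambda\,\overline{w_\lambda}$ on $\D$, and this combined with the chain-rule identity $\mu_H\overline{\zeta'}=\nu_\lambda\zeta'$ makes every term in $\partial_{\bar\lambda}Z$ cancel exactly; an entirely analogous calculation gives $\partial_\gamma Z=0$, so $Z$ is jointly holomorphic on $M\times\overline M$ by Hartogs (Theorem \ref{lem:hartogs}).

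The main obstacle is the conjugation bookkeeping in the middle step: recognising that introducing $\overline M$ as an independent parameter for the reflected Beltrami coefficient is what promotes the classical Bers trick from a real-analytic construction to a genuinely holomorphic family. Once that is in place, the proof is driven throughout by the single matching identity $\mu_\eta|_\D=\nu_\lambda=\mu_{H\circ\zeta}|_\D$, which simultaneously delivers conformality of $Z$ in $z$ and holomorphy in $(\lambda,\gamma)$.
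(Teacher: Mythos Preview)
Your proposal is correct and follows essentially the same approach as the paper: both construct $\eta$ by integrating the Beltrami coefficient $\nu_\lambda$ of $H(\lambda,\cdot)\circ\zeta$ on $\D$ together with its Schwarz reflection $\tau^*\nu_\gamma$ on $\Cx\setminus\overline\D$, with the reflected half indexed by the independent parameter $\gamma\in\overline M$; properties (1)--(3) and (i)--(iv) then follow exactly as you describe (the paper invokes Weyl's lemma for (i), which is just your ``matching Beltrami coefficients'' observation). The only visible difference is in (v): the paper simply cites ``the usual argument of Buff and Ch\'eritat'', whereas you spell that argument out via the implicit differentiation $w_{\bar\lambda}=-\nu_\lambda\overline{w_\lambda}$ and the cancellation coming from $\mu_H\overline{\zeta'}=\nu_\lambda\zeta'$ --- but that \emph{is} the Buff--Ch\'eritat computation, so this is not a genuinely different route.
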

\begin{proof}
Let $h_\lambda(z) := H(\lambda, z)$, so that $h_\lambda$ is quasiconformal by Lemma \ref{lem:lambda1}. We start by defining a Beltrami coefficient $\mu_\lambda\colon\D\to\D$ as $\mu_\lambda := (h_\lambda\circ\zeta)^*\mu_0$. It follows from Lemma \ref{lem:MRMTconv} that $\mu_\lambda(z)$ is analytic in $\lambda$ for each $z\in\D$, but, since we want to preserve the unit disc, we cannot integrate it directly to obtain an integrating map depending analytically on $\lambda$. To be more precise, we could set $\mu = 0$ outside of $\D$ and integrate the resulting Beltrami coefficient to obtain a quasiconformal map moving holomorphically with $\lambda$, but we could not guarantee that it maps $\D$ to $\D$.

To circumvent this problem, consider that the space $L^\infty(\Cx)$ can be split as $L^\infty(\Cx) = L^\infty(\D)\oplus L^\infty(\Cx\setminus\overline{\D})$. Theorem \ref{lem:MRMT} can therefore be restated as saying that the map
\[ L^\infty(\D)\times L^\infty(\Cx\setminus\overline\D)\times\Cx\ni (\mu, \nu, z)\mapsto \varphi^{\mu+\nu}(z)\in\Cx \]
is holomorphic. If, furthermore, we take $\nu = \tau^*\mu$, where $\tau(z) = 1/\bar z$, then the corresponding integrating map $\varphi^{\mu+\nu}$ is symmetric with respect to reflection across the unit circle, and therefore $\varphi^{\mu+\nu}(\D) = \D$ (see \cite[Exercise 1.4.1]{BF14} or \cite[Lemma 14]{AB60}). Its restriction to the unit disc is therefore a quasiconformal self-map of the unit disc, fixing the origin (and one) and integrating $\mu$.

Applying this idea to our situation, we let $\mu = \mu_\lambda = (h_\lambda\circ\zeta)^*\mu_0$ and $\nu = \nu_\gamma = \tau^*(h_\gamma\circ\zeta)^*\mu_0$, where $(\lambda, \gamma)\in M\times\overline M$. We obtain the map $\eta\colon M\times\overline M\times\D\to\Cx$ given by
\[ \eta(\lambda, \gamma)(z) := \varphi^{\mu_\lambda + \nu_\gamma}(z); \]
it follows from the definition that $\mu_\lambda$ and $\nu_\gamma$ are holomorphic in $\lambda$ and $\gamma$ (respectively), so that we conclude from Theorem \ref{lem:MRMT} that $\eta$ is a holomorphic motion. Properties (1), (2), and (3) are readily established from the definition of $\varphi^{\mu_\lambda + \nu_\gamma}$.

If we define $Z(\lambda, \gamma)\colon\Omega_{\lambda,\gamma}'\to\Cx$ as proposed, property (i) follows from the fact that -- by construction -- it preserves the standard almost complex structure $\mu_0$ (see Weyl's lemma, e.g. \cite[Theorem 1.14]{BF14}). Properties (ii), (iii), and (iv) now follow from (2), (1), and (3), respectively. Finally, property (v) follows by the usual argument of Buff and Ch\'eritat \cite[p. 21]{BC04}.
\end{proof}
\begin{remark}
The question of how Riemann maps relate to holomorphic motions of the domain is a deep one; other results on the topic can be found in \cite{PR86,Rod86,Zak16}.
\end{remark}
Before we can apply Lemma \ref{lem:riemannhol} to our setting, we need to extend our holomorphic motion of the Julia set to the whole orbit of the wandering domain $U$. Given Theorem \ref{lem:lambda2}, this is not an unsurmountable hurdle -- but the extension must also respect the dynamics of $f$ at $p\in U$, and this must be handled more carefully:
\begin{lemma}\label{lem:extend}
Let $f = f_{\lambda_0}\in (f_\lambda)_{\lambda\in M}$ be an entire function with a wandering domain $U$, and let $p\in U$. Let $H\colon\Lambda\times J(f)\to\Cx$ be a holomorphic motion with basepoint $\lambda_0$ conjugating $f|_{J(f)}$ to $f_\lambda|_{J(f_\lambda)}$, where $\Lambda\subset M$ is a neighbourhood of $\lambda_0$. Then, there exists a neighbourhood $\Lambda'\subset\Lambda$ of $\lambda_0$ and an extension $\tilde H\colon\Lambda'\times\Cx\to\Cx$ such that, for all $\lambda\in \Lambda'$ and $n\in\mathbb{N}_0$,
\[ \tilde H(\lambda, f^n(p)) = f_\lambda(\tilde H(\lambda, p)). \]
In other words, $\tilde H(\lambda, \cdot)$ conjugates the $f$-orbit of $p$ to the $f_\lambda$-orbit of $\tilde H(\lambda, p)$ for all $\lambda\in\Lambda'$.
\end{lemma}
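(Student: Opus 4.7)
The plan is to first extend $H$ to a holomorphic motion of all of $\Chat$ (thereby losing the conjugation on Fatou components, but gaining a natural choice of $p_\lambda$), then redefine the motion on the orbit of $p$ so that it is conjugated by hand, and finally extend once more using Theorem~\ref{lem:lambda2}. More precisely, shrink $\Lambda$ to a simply connected neighbourhood $\Lambda_1$ of $\lambda_0$ and apply Theorem~\ref{lem:lambda2} to extend $H$ to a holomorphic motion $\bar H\colon\Lambda_1\times\Chat\to\Chat$ of the whole sphere. Since each $\bar H(\lambda,\cdot)$ is a homeomorphism of $\Chat$ sending $J(f)$ onto $J(f_\lambda)$, it maps Fatou components of $f$ bijectively onto Fatou components of $f_\lambda$. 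Setting $p_\lambda:=\bar H(\lambda,p)$ and $U_{n,\lambda}:=\bar H(\lambda,U_n)$ gives a holomorphic deformation of $p$ inside the Fatou component $U_\lambda:=U_{0,\lambda}$, and identifies the forward orbit of $U$ with a sequence of Fatou components of $f_\lambda$.

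The heart of the proof is the claim that, for $\lambda$ in some (possibly smaller) neighbourhood $\Lambda'\subseteq\Lambda_1$, the combinatorial action of $f_\lambda$ on these Fatou components agrees with that of $f$, i.e.\ $f_\lambda(U_{n,\lambda})\subseteq U_{n+1,\lambda}$ for every $n\ge 0$, and hence $f_\lambda^n(p_\lambda)\in U_{n,\lambda}$. Since $\bar H$ conjugates $f|_{J(f)}$ to $f_\lambda|_{J(f_\lambda)}$ and $\partial U_n\subseteq J(f)$, one has $f_\lambda(\partial U_{n,\lambda})=\bar H(\lambda,f(\partial U_n))\subseteq\partial U_{n+1,\lambda}$; combined with the fact that $f_\lambda(U_{n,\lambda})$ is open, connected, and contained in $F(f_\lambda)$, this places $f_\lambda(U_{n,\lambda})$ inside a unique Fatou component of $f_\lambda$ which at $\lambda=\lambda_0$ equals $U_{n+1}$ and, by continuity together with the discrete (combinatorial) nature of the target, must remain $U_{n+1,\lambda}$ for nearby $\lambda$. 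The main obstacle is to obtain this uniformly in $n$: a naive continuity argument yields a neighbourhood shrinking with $n$, so some care is needed to exploit the fact that $\bar H(\lambda,\cdot)$ is a global homeomorphism conjugating the boundary dynamics of all Fatou components at once, so that a single $\Lambda'$ works.

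With the key claim in hand, set $E:=J(f)\cup\{f^n(p):n\ge 0\}\cup\{\infty\}\subseteq\Chat$; this is closed in $\Chat$ because the orbit of $p$ lies in pairwise distinct Fatou components and hence can only accumulate on $J(f)\cup\{\infty\}$. Define $H^*\colon\Lambda'\times E\to\Chat$ by $H^*|_{J(f)}:=H$, $H^*(\lambda,\infty):=\infty$, and $H^*(\lambda,f^n(p)):=f_\lambda^n(p_\lambda)$. Holomorphy in $\lambda$ is immediate from the construction, and the basepoint condition $H^*(\lambda_0,z)=z$ is clear. Injectivity in $z$ for each fixed $\lambda\in\Lambda'$ follows from the key claim: the orbit points $f_\lambda^n(p_\lambda)$ lie in the pairwise disjoint Fatou components $U_{n,\lambda}$, which are disjoint from $J(f_\lambda)=H(\lambda,J(f))$, so they are pairwise distinct and distinct from the images of Julia points under $H(\lambda,\cdot)$. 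Hence $H^*$ is a holomorphic motion of $E$.

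Finally, apply Theorem~\ref{lem:lambda2} to $H^*$, shrinking $\Lambda'$ if necessary so that it remains simply connected, to obtain a holomorphic motion $\tilde H\colon\Lambda'\times\Chat\to\Chat$ extending $H^*$. Restricting to $\Lambda'\times\Cx\to\Cx$ yields the desired $\tilde H$, which by construction extends $H$ and satisfies
\[ \tilde H(\lambda,f^n(p))=H^*(\lambda,f^n(p))=f_\lambda^n(p_\lambda)=f_\lambda^n(\tilde H(\lambda,p)) \]
for every $\lambda\in\Lambda'$ and $n\in\N_0$, as required. As noted, the delicate point is the combinatorial argument in the second paragraph; everything else follows from the repeated use of the Bers--Royden-type extension theorem.
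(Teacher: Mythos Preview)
Your approach is identical to the paper's: extend $H$ to all of $\Chat$ via the Bers--Royden extension (Theorem~\ref{lem:lambda2}), use this to define $p_\lambda$, redefine the motion on the orbit of $p$ by $f^n(p)\mapsto f_\lambda^n(p_\lambda)$, and extend once more. The paper actually glosses over the injectivity verification that you correctly flag as the substantive point.

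Your worry about uniformity in $n$, however, is misplaced, and the hedging leaves a small gap in your write-up. No ``nearby $\lambda_0$'' argument is needed at all: for fixed $q\in U_n$, the map
\[
\lambda\longmapsto \bar H(\lambda,\cdot)^{-1}\bigl(f_\lambda(\bar H(\lambda,q))\bigr)
\]
is continuous on $\Lambda_1$ (the inverse of a holomorphic motion of the compact sphere is jointly continuous by a sequential compactness argument), takes values in $F(f)$ (since $\bar H(\lambda,\cdot)$ carries $F(f)$ to $F(f_\lambda)$ and the Fatou set is forward invariant), and equals $f(q)\in U_{n+1}$ at $\lambda_0$. Because Fatou components are open and closed in $F(f)$ and $\Lambda_1$ is connected, this map lands in $U_{n+1}$ for \emph{every} $\lambda\in\Lambda_1$. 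Hence $f_\lambda(U_{n,\lambda})\subseteq U_{n+1,\lambda}$ holds on all of $\Lambda_1$ for every $n$ simultaneously, and no shrinking is required until the second application of Theorem~\ref{lem:lambda2}.
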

\begin{proof}
Because any Banach analytic manifold is locally simply connected, we may assume without loss of generality that $\Lambda$ is simply connected. Then, Theorem \ref{lem:lambda2} gives us a smaller neighbourhood $\Lambda''\subset\Lambda$ and a unique extension $K\colon \Lambda''\times\Cx\to\Cx$ of $H$ with harmonic Beltrami coefficient. In particular, $\lambda\mapsto K(\lambda, p)$ moves holomorphically with $\lambda\in\Lambda''$, and so does $f_\lambda^n(K(\lambda, p))$ for every $n\in\mathbb{N}$. This allows us to define a new holomorphic motion $H_1$ of $J(f)\cup\{f^n(p)\}_{n\geq 0}$ over $\Lambda''$, which respects the dynamics, by setting
\[ H_1(\lambda, z) = \begin{cases} H(\lambda, z), & z\in J(f) \\
                        f_\lambda^n(K(\lambda, p)), & \text{$z = f^n(p)$ for some $n\geq 0$.} \end{cases} \]
We can once again apply Theorem \ref{lem:lambda2} to extend $H_1$ to a holomorphic motion $\tilde H$ of $\Cx$ over some neighbhourhood $\Lambda\subset\Lambda''$. By construction, $\tilde H$ respects the dynamics on $J(f)$ and on the orbit of $p\in U$.
\end{proof}
\begin{remark}
The argument used here to extend the holomorphic motion from $J(f)$ into the Fatou set while also preserving the dynamics of a given orbit does not always work for pre-periodic Fatou components. That is because, in such a component, it cannot be ruled out that either $f^n(p)$ or $f_\lambda^n(K(\lambda, p))$ is periodic (for $f$ or $f_\lambda$, respectively). In that case, the new ``holomorphic motion'' $H_1$ is either not well-defined or not injective.
\end{remark}
We can now prove Theorem \ref{thm:DS}.
\begin{proof}[Proof of Theorem \ref{thm:DS}]
We begin by taking Riemann maps $\zeta_n\colon\D\to U_n$, $n\geq 0$, with basepoints $p_n = f^n(p)$. Then, we apply Lemma \ref{lem:extend} to the dynamics-preserving holomorphic motion $H$ of $J(f)$ over $\Lambda\subset M$ (which exists by hypothesis), obtaining a holomorphic motion $\tilde H$ that preserves the dynamics on $J(f)\cup\{p_n\}_{n\geq 0}$. Applying Lemma \ref{lem:riemannhol}, we obtain holomorphic motions $\eta_n\colon \Lambda'\times\overline{\Lambda'}\times\D\to\Cx$, $n\geq 0$, such that $\zeta_{\lambda,n}(z) := \tilde H\left(\lambda, \zeta_n\circ\eta_n(\lambda,\overline\lambda)^{-1}(z)\right)$ are Riemann maps of $U_{\lambda,n} = f_\lambda^n(U_\lambda) = \tilde H(\lambda, U_n)$ normalised so that $\zeta_{\lambda,n}(0) = \tilde H(\lambda, p_n)$. The functions $g_{\lambda,n}\colon\D\to\D$ defined by
\[ g_{\lambda,n}(z) := \zeta_{\lambda,n}^{-1}\circ f_\lambda\circ\zeta_{\lambda,n-1}(z) \]
form a sequence of inner functions that is conjugate to $(f_\lambda|_{U_{\lambda,n}})_{n\in\N}$, but they are not holomorphic in $\lambda$. They can, however, be understood as the restriction to $(\lambda, \overline\lambda)\in \Lambda'\times\overline{\Lambda'}$ of the more general functions
\[ G(\lambda, \gamma)(z) := Z_n(\lambda, \gamma)^{-1}\circ f_\lambda\circ Z_{n-1}(\lambda, \gamma)(z), \]
where $Z_n(\lambda, \gamma)\colon \eta_n(\lambda,\gamma)(\D)\to\Cx$ are also given by Lemma \ref{lem:riemannhol}. These functions are holomorphic in $\lambda, \gamma$, and $z$; it follows that the maps $\alpha_n\colon \Lambda'\times\overline{\Lambda'}\to\Cx$ given by
\[ \alpha_n(\lambda, \gamma) := G_n(\lambda, \gamma)'(0) \]
are holomorphic. Furthermore, since the quasiconformal maps $(\eta_n(\lambda, \gamma))_{n\in\N}$ have dilatation uniformly bounded in terms of $\lambda$ and $\gamma$, it follows that the sequence $(\alpha_n(\lambda, \gamma))_{n\in\N}$ is uniformly bounded on compact subsets of $\Lambda'\times\overline{\Lambda'}$ by Cauchy's integral formula. Hence, the function $\A\colon \Lambda'\times\overline{\Lambda'}\to\ell^\infty$ given by
\[ \A(\lambda, \gamma) := \left(\alpha_n(\lambda, \gamma)\right)_{n\in\N} \]
is holomorphic by Lemma \ref{lem:linfty}. Finally, it follows from the construction of $\A$ that $\A(\lambda, \overline\lambda) = (g_{\lambda,n}'(0))_{n\in\N}$, completing the proof.
\end{proof}
\begin{remark}
Because extensions of holomorphic motions are not in general unique, a distortion map built in this manner is also not in general unique. However, since $g_{\lambda,n}'(0)$ is also the hyperbolic distortion of $f_\lambda$ at $\tilde H(\lambda, f^n(p))$, which is uniquely determined by the geometry of $U_{\lambda,n}$ and $f_\lambda$ itself, any two distortion maps $\A$ and $\A'$ agree on the modulus of $g_{\lambda,n}'(0)$ for all $n$.
\end{remark}
Corollary \ref{cor:real} is an immediate consequence of Theorem \ref{thm:DS}, since $\{(\lambda, \overline{\lambda})\colon\lambda\in\Lambda'\}$ is the graph of a real-analytic function. Corollary \ref{cor:contr} now follows immediately from Corollary \ref{cor:real} by noting that $\|\A(\lambda,\overline\lambda)\|_\infty$ varies continuously with $\lambda\in \Lambda'$, and thus if $\|\alpha\|_\infty < 1$ then there exists a neighbourhood $\Lambda''\subset M$ of $\lambda_0$ for which $\|\A(\lambda, \overline\lambda)\|_\infty < 1$, which implies that the corresponding wandering domain $U_\lambda$ is contracting by Theorem \ref{thmout:befrs}.

\section{Perturbing Herman-type wandering domains}\label{sec:Herman}
Theorem \ref{thm:Herman} follows immediately from the more precise statement.

\begin{theorem}\label{thm:Hermanlong}
Let $f\colon\Cx\to\Cx$ be a transcendental entire function with an attracting Herman-type wandering domain $U$, and let $M = \{\lambda = (\lambda_1, \lambda_2, \ldots)\in\ell^\infty : \|\lambda\|_\infty < 1\}$. Then, there exist $z_0\in U$ and a natural family $(f_\lambda)_{\lambda\in M}$ such that:
\begin{enumerate}[(i)]
    \item $f_0 = f$;
    \item $J(f_\lambda)$ moves holomorphically in $(f_\lambda)_{\lambda\in M}$;
    \item If $\alpha$ is a distortion sequence of $f$ at $z_0$ and $\alpha_n(\lambda)$ denotes the $n$-th entry of the distortion map $\A(\lambda, \overline\lambda)$ of $\alpha$ over $M$, then
    \[ \frac{d\alpha_n}{d\lambda_n}(0)\neq 0. \]
\end{enumerate}
\end{theorem}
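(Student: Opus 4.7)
The plan is to construct the family via a quasiconformal surgery in which mutually disjoint Beltrami deformations are installed, one at each generation of the wandering orbit, and parametrised linearly by $\lambda\in M$. The Herman-type structure supplies the scaffolding: we have $g\colon\Cx^*\to\Cx^*$ with $\exp\circ f = g\circ\exp$ and $V = \exp(U)$ an attracting Fatou component of $g$; fix an attracting fixed point $\zeta^*\in V$ of multiplier $\rho\in\D^*$ and let $\kappa\colon V\to\Cx$ be the extended Koenigs semi-conjugacy, so that $\kappa\circ g = \rho\cdot\kappa$. Set $\zeta_0 = \exp(p_0)$. For each $n\geq 1$, I would choose a small topological disk $d_n\subset V$ on which $\kappa$ is injective, placing $d_n$ inside the fundamental annulus $\kappa^{-1}(\{|\rho|^n|\kappa(\zeta_0)|\leq|w|<|\rho|^{n-1}|\kappa(\zeta_0)|\})$ at an angular position distinct from every other $d_m$, and sufficiently small that the $g$-grand-orbits $\bigcup_{k\in\mathbb{Z}}g^k(d_n)$ are pairwise disjoint in $V$. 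On each $d_n$, fix a smooth Beltrami coefficient $\sigma_n$ with $\|\sigma_n\|_\infty=1$ (the precise shape to be pinned down later).

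Extend each $\sigma_n$ by $g$-invariance to $\tilde\sigma_n\in L^\infty(V)$, lift to $\Cx$ via $\exp$ (a routine chain-rule verification using $g\circ\exp=\exp\circ f$ shows this produces an $f$-invariant coefficient on $\exp^{-1}(V)$), and further propagate by $f$-invariance to preimages of the wandering orbit, extending by zero elsewhere, to obtain $\nu_n\in L^\infty(\Cx)$. The supports of distinct $\nu_n$ are pairwise disjoint. Now set
\[
\mu_\lambda := \sum_{n\geq 1}\lambda_n\nu_n;
\]
disjointness gives $\|\mu_\lambda\|_\infty = \|\lambda\|_\infty<1$ with $\lambda\mapsto\mu_\lambda$ linear. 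By Theorem \ref{lem:MRMT} the normalised integrating map $\varphi_\lambda$ depends holomorphically on $\lambda$, and the $f$-invariance of $\mu_\lambda$ makes $f_\lambda:=\varphi_\lambda\circ f\circ\varphi_\lambda^{-1}$ entire. This is a natural family with $f_0=f$, and $H(\lambda,z):=\varphi_\lambda(z)$ restricted to $J(f)$ is a holomorphic motion conjugating $f|_{J(f)}$ to $f_\lambda|_{J(f_\lambda)}$, establishing (i) and (ii).

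The hard part is (iii). Writing $\alpha_n(\lambda)=f_\lambda'(p_{\lambda,n-1})\cdot\zeta_{\lambda,n-1}'(0)/\zeta_{\lambda,n}'(0)$, with $\zeta_{\lambda,k}\colon\D\to U_{\lambda,k}$ the Riemann map satisfying $\zeta_{\lambda,k}(0)=\varphi_\lambda(p_k)$, I would compute $\partial_{\lambda_n}\alpha_n(0)$ by combining the Ahlfors--Bers variational formula for $\varphi^{\epsilon\nu_n}$ with the description of Riemann-map dependence supplied by Lemma \ref{lem:riemannhol}. The outcome is that $\partial_{\lambda_n}\alpha_n(0)$ is a continuous linear functional of the template $\sigma_n$, expressible as an integral involving pullbacks of $\nu_n$ by $\zeta_{n-1}$ and $\zeta_n$. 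The key observation is that this functional is \emph{not} identically zero: $\nu_n$ has non-trivial support inside $U_n$ itself, namely the component of $\exp^{-1}(d_n)$ contained in $U_n$, which sits near $p_n$; a non-trivial QC deformation localised near $p_n$ must change the conformal radius $|\zeta_n'(0)|$ to first order for some choice of $\sigma_n$. Choosing $\sigma_n$ to have non-vanishing pairing with the associated kernel -- an open, generic condition -- yields $\partial_{\lambda_n}\alpha_n(0)\neq 0$. Making such a choice for each $n\geq 1$ completes the proof.
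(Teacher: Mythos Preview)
Your construction for (i) and (ii) is sound, and it is a genuinely different route from the paper's: you build a \emph{conjugacy} family $f_\lambda=\varphi_\lambda\circ f\circ\varphi_\lambda^{-1}$, whereas the paper performs a surgery that actually changes the map, replacing the action of $f$ in linearising coordinates near each $z_{n-1}$ by $w\mapsto(\alpha+\rho\lambda_n)w$ via an explicit quasiconformal interpolation (Lemma~\ref{lem:interpol}), and only then straightens. In the paper's family the inner functions $g_{\lambda,n}$ are genuinely different for different $n$, and $\tilde\alpha_n$ appears explicitly in the chain-rule expression for $g_{\lambda,n}'(0)$; the derivative $\partial_{\lambda_n}\alpha_n(0)$ is then \emph{computed}, not merely asserted to be nonzero (Claims~\ref{cl:approx}--\ref{cl:calc}).

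Your argument for (iii), however, has a real gap. Because each $\nu_n$ is the lift via $\exp$ of a $g$-invariant coefficient on $V$, and every Riemann map $\zeta_k=\exp_k^{-1}\circ\zeta$ factors through the single Riemann map $\zeta$ of $V$, the pullbacks $\zeta_k^{\,*}(\nu_n|_{U_k})$ are \emph{independent of $k$}. Consequently all the disc-integrating maps $h_{\lambda,k}$ coincide, all inner functions $g_{\lambda,k}$ coincide, and the distortion sequence is constant in $k$: $\alpha_k(\lambda)=\beta(\lambda)$ for every $k$, where $\beta(\lambda)$ is the multiplier of the attracting fixed point of the deformed self-map of $V$. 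Your key observation --- that $\nu_n$ is ``localised near $p_n$'' and therefore perturbs $|\zeta_n'(0)|$ --- is thus misleading: the same coefficient sits at the same relative position inside $U_{n-1}$, perturbing $|\zeta_{n-1}'(0)|$ identically, so any such contribution cancels in the ratio $\zeta_{\lambda,n-1}'(0)/\zeta_{\lambda,n}'(0)$. What must be shown instead is $\partial_{\lambda_n}\beta(0)\neq 0$, i.e.\ that the multiplier at $w_0$ moves to first order under the $g$-invariant Beltrami $\tilde\sigma_n$. This is true for a generic choice of $\sigma_n$ --- the variational formula pairs $\tilde\sigma_n$ against a nonzero holomorphic quadratic differential on the quotient torus $V/g$, whose modulus encodes the multiplier --- but it is a statement about the attracting basin's Teichm\"uller theory, not about any localisation specific to $U_n$, and it needs an argument you have not given.
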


The proof of Theorem \ref{thm:Hermanlong} occupies the rest of this section. We will use a quasiconformal surgery parametrised by the proposed Banach analytic manifold $M$.

Let $z_0\in U$ denote the lift in $U$ of the attracting fixed point $w_0$ of $h\colon\Cx^*\to\Cx^*$, where $\exp\circ f = h\circ\exp$, and let $z_n := f^n(z_0)$. Denoting the immediate basin of attraction of $w_0$ by $V$, let $\zeta\colon\D\to V$ be a Riemann map with $\zeta(0) = w_0$ and $\arg \zeta'(0) = \arg w_0$, and let $\exp_n^{-1}$ denote the branch of the logarithm on $V$ mapping $w_0$ to $z_n$ for $n\geq 0$. We see that $\zeta_n := \exp_n^{-1}\circ\zeta$ is a Riemann map of $U_n$ satisfying $\zeta_n(0) = z_n$ and $\zeta_n'(0) > 0$. It follows that we have an inner function $g\colon\D\to\D$ such that, for all $n\in\N$,
\[ g(z) = \zeta_n^{-1}\circ f\circ \zeta_{n-1}(z), \]
with $g(0) = 0$ and $\alpha := g'(0) = h'(w_0)$. In particular, $\alpha = (\alpha, \alpha, \ldots)_{n\in\N}$ is a distortion sequence for $f$ at $z_0$ (this is the only mention we will make of the distortion sequence $\alpha$, as opposed to the multiplier $\alpha$. We hope this will not cause confusion).

By Koenig's linearisation theorem \cite[Theorem 8.2]{Mil06}, there exist a neighbourhood $\Delta\subset\D$ of the origin and a biholomorphism $\psi\colon\Delta\to\D_L := \{z : |z| < L\}$ such that
\[ \psi\circ g(z) = \alpha\cdot\psi(z)\text{ for }z\in\Delta \]
and, furthermore, $\psi'(0) = 1$.

Now, we wish to use the linearised coordinates $w = \psi(z)$ to substitute the action $w\mapsto \alpha w$ of $g$ for the action $w\mapsto \tilde\alpha_nw$, where $\tilde\alpha_n := \alpha + \rho\lambda_n$ and $\rho := |\alpha|/2\cdot\min\{|\alpha, 1 - \alpha\}$. This will require the following kind of quasiconformal interpolation:
\begin{lemma}\label{lem:interpol}
Let $R > 0$, let $\alpha\in\D^*$, let $r = |\alpha|R$, and let $\rho = |\alpha|/2\cdot\min\{|\alpha|, 1 - |\alpha|\}$. Then, the map $\varphi\colon\D\times\{z : r < |z| < R\}\to\Cx$ given by
\[ \varphi(\lambda, te^{i\theta}) = \left(\frac{R - t}{R - r}\tilde\alpha + \frac{t - r}{R - r}\alpha\right)te^{i\theta}, \]
where $\tilde\alpha = \alpha + \rho\lambda$ and $\lambda\in\D$, satisfies the following properties.
\begin{enumerate}[(1)]
    \item For every $\lambda\in\D$, $\varphi(\lambda, \cdot)$ interpolates between $re^{i\theta}\mapsto \tilde\alpha re^{i\theta}$ and $Re^{i\theta}\mapsto \alpha Re^{i\theta}$.
    \item For every $\lambda\in\D$, the map $\varphi_\lambda := \varphi(\lambda, \cdot)$ is a quasiconformal homeomorphism.
    \item For every fixed $z$, the map $\D\ni\lambda\mapsto \mu(\lambda) := \varphi_\lambda^*\mu_0(z)$ is analytic and $|\mu(\lambda)| \leq |\lambda|$.
\end{enumerate}
\end{lemma}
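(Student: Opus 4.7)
The plan is to treat the three properties in order, exploiting the structure $\varphi_\lambda(z) = A_\lambda(|z|)\,z$, where after simplification $A_\lambda(t) = \alpha + \tfrac{R-t}{R-r}\rho\lambda$ is complex affine in $t$. Property (1) is then immediate: at $t = r$ the coefficient equals $\tilde\alpha$, and at $t=R$ it equals $\alpha$.

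For (2) and (3), the core computation is of the Wirtinger derivatives. Using $\partial_z|z| = \bar z/(2|z|)$ and $\partial_{\bar z}|z| = z/(2|z|)$, one obtains
\[
\partial_z\varphi_\lambda = \alpha + \frac{R - 3|z|/2}{R-r}\rho\lambda, \qquad \partial_{\bar z}\varphi_\lambda = -\frac{z^2}{2|z|(R-r)}\rho\lambda,
\]
so that $\mu(\lambda) = \partial_{\bar z}\varphi_\lambda/\partial_z\varphi_\lambda$ is, for each fixed $z$, a M\"obius transformation of $\lambda$. Analyticity of $\lambda\mapsto\mu(\lambda)$, one half of (3), is then automatic.

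The key estimate is the bound $|\mu(\lambda)| \leq |\lambda|$. Writing $\mu(\lambda) = d(z)\rho\lambda/(\alpha + c(z)\rho\lambda)$ with $c(z) = (R-3|z|/2)/(R-r)$ and $d(z) = -z^2/(2|z|(R-r))$, a rearrangement (valid once the denominator is bounded below) reduces the bound to $(|c(z)| + |d(z)|)\rho \leq |\alpha|$. A short case split on whether $|z|$ lies above or below $2R/3$ gives $|c(z)| + |d(z)| \leq 1/(1-|\alpha|)$ on the annulus, and then the required inequality $\rho/(1-|\alpha|) \leq |\alpha|/2$ is exactly what the calibration $\rho = \tfrac{|\alpha|}{2}\min\{|\alpha|,\,1-|\alpha|\}$ was designed to provide. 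This same estimate guarantees $|\mu(\lambda)| < 1$ strictly for $|\lambda| < 1$, hence positive Jacobian, so $\varphi_\lambda$ is locally quasiconformal.

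Finally, for the global injectivity needed in (2), the special form $\varphi_\lambda(te^{i\theta}) = t|A_\lambda(t)|\,e^{i(\theta + \arg A_\lambda(t))}$ decomposes $\varphi_\lambda$ as a radial stretch $R(t) = t|A_\lambda(t)|$ composed with an angular twist by $\arg A_\lambda(t)$; so injectivity reduces to strict monotonicity of $R$. I would verify $R'(t) \geq |A_\lambda(t)| - t|A_\lambda'(t)| > 0$ by the same sort of bookkeeping, once more invoking the precise value of $\rho$ to absorb the linear-in-$t$ correction. The main obstacle throughout is not any deep argument but confirming that the \emph{single} calibration of $\rho$ simultaneously suffices for positive Jacobian, the Schwarz-type bound $|\mu|\leq|\lambda|$, and strict monotonicity of $R$, as well as ensuring along the way that $|\tilde\alpha| < 1$ so the image annulus makes sense.
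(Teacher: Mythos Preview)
Your argument is correct and in fact more detailed than the paper's, but the route to property (3) is genuinely different. You compute the Wirtinger derivatives explicitly, identify $\mu(\lambda)$ as a M\"obius function of $\lambda$, and then bound $|\mu(\lambda)|\le|\lambda|$ by hand via the inequality $(|c(z)|+|d(z)|)\rho\le|\alpha|$, carefully tracking where the calibration $\rho=\tfrac{|\alpha|}{2}\min\{|\alpha|,1-|\alpha|\}$ is consumed. The paper instead first establishes quasiconformality in (2) (circles of radius $t$ go to circles of radius $\sigma(s)=|\alpha+(1-s)\rho\lambda|(r+s(R-r))$, and $\sigma$ is injective ``because $\rho$ is small compared to $\alpha$''; smooth extension to the closure then gives quasiconformality), and for (3) simply observes that $\lambda\mapsto\varphi_\lambda(z)$ is holomorphic, invokes Lemma~\ref{lem:MRMTconv} to get holomorphy of $\mu(\lambda)$, notes $\mu(0)=0$ and $|\mu(\lambda)|<1$ from (2), and applies the Schwarz lemma. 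The paper's approach is one line and never looks at the explicit form of $\mu$; yours makes transparent exactly how the constant $\rho$ is used (and your monotonicity check $R'(t)\ge|\alpha|-\rho|\lambda|/(1-|\alpha|)>|\alpha|/2$ is more explicit than the paper's hand-wave for injectivity of $\sigma$). Either way works; the Schwarz-lemma shortcut is worth knowing, since it shows the bound $|\mu|\le|\lambda|$ is automatic once (2) is in hand.
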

\begin{proof}
Property (1) is clear by substituting $t = r$ and $t = R$ into the definition of $\varphi$. To prove (2), we start by noting that $\varphi_\lambda$ maps the circle of radius $t = r + s(R - r)$, $0\leq s\leq 1$, homeomorphically onto the circle of radius $\sigma(s) = |\alpha + (1 - s)\rho\lambda|(r + s(R - r))$. Because $\rho$ is small compared to $\alpha$, $\sigma(s)$ is injective, and hence $\varphi_\lambda$ is an orientation-preserving diffeomorphism between $\{z : r < |z| < R\}$ and $\{z : \tilde\alpha r < |z| < \alpha R\}$. It follows that it is quasiconformal, since it extends smoothly to the closure (see \cite[Remark 1.6(b)]{BF14}). Finally, it is clear that, for each $z = te^{i\theta}$, the map $\D\ni\lambda\mapsto \varphi_\lambda(z)$ is analytic, and thus by Lemma \ref{lem:MRMTconv} so is its Beltrami coefficient. Property (3) now follows from the Schwarz lemma \cite[Theorem 7.19]{Muj86}.
\end{proof}

We want to apply Lemma \ref{lem:interpol} to our situation: we choose $R\in (0, L)$ (we will see further ahead, in Claim \ref{cl:approx}, that $R$ must be ``small''), and interpolate between $w\mapsto\tilde\alpha_n w = (\alpha + \rho\lambda_n)w$ on $\{w\colon |w| = |\alpha| R\}$ and $w\mapsto \alpha w$ on $\{w\colon |w| = R\}$, obtaining the quasiconformal maps $\varphi_n(w) = \varphi(\lambda_n, w)$. Also by Lemma \ref{lem:interpol}, the Beltrami coefficients $\varphi_n^*\mu_0$ satisfy $\|\varphi_n^*\mu_0\|_\infty \leq |\lambda_n| \leq \|\lambda\|_\infty$.

Define now for $n\in\N$ the sets
\[ \Delta_n := \zeta_{n-1}\circ\psi^{-1}\left(\{w\colon |w|\leq|\alpha|R\}\right)\text{ and }A_n := \zeta_{n-1}\circ\psi^{-1}\left(\{w\colon |\alpha|R < |w| < R\}\right) \]
and let $g_\lambda\colon\Cx\to\Cx$ be the quasiregular map given by
\[ g_\lambda(z) := \begin{cases}
                    \zeta_n\circ\psi^{-1}\left(\tilde\alpha_n\cdot\psi\circ\zeta_{n-1}^{-1}(z)\right), & z\in\Delta_n, \\
                    \zeta_n\circ\psi^{-1}\left(\varphi_n\circ\psi\circ\zeta_{n-1}^{-1}(z)\right), & z\in A_n, \\
                    f(z) & \text{elsewhere.}
\end{cases} \]
It is important to notice that, because the internal dynamics of $U$ are essentially the dynamics of $h\colon V\to V$, the domains $A_n$ are ``fundamental domains'' for the grand orbit relation (see \cite[pp. 194--195]{MSS83}) of $f$. This means that the orbit of any point in $\Cx$ intersects $A_n$ for at most one value of $n\in\N$, and so the Beltrami coefficient $\mu_\lambda$ given by
\[ \mu_\lambda(z) := \begin{cases}
                        g_\lambda^*\mu_0(z), & z\in A_n, \\
                        (f^j)^*g_\lambda^*\mu_0(z), & z\in f^{-j}(A_n), \\
                        \mu_0(z), & \text{elsewhere}
\end{cases} \]
is $g_\lambda$-invariant, i.e. $g_\lambda^*\mu_\lambda = \mu_\lambda$, and satisfies $\|\mu_\lambda\|_\infty \leq \|\lambda\|_\infty$. Furthermore -- and this is why our construction was so strict -- the map $M\ni\lambda\mapsto \mu_\lambda\in\D$ is analytic for every $z\in\Cx$. It follows from Theorem \ref{lem:MRMT} that there exists a quasiconformal map $\varphi_\lambda\colon\Cx\to\Cx$ fixing $0$ and $z_0$ varying analytically with $\lambda\in M$ and such that
\[ f_\lambda := \varphi_\lambda\circ g_\lambda\circ\varphi_\lambda^{-1} \]
is an entire function.

Now, the fact that both $\varphi_\lambda$ and $g_\lambda$ depend analytically on $\lambda$ does not imply that the same is true of $f_\lambda$. Define the function
\[ \psi_\lambda(z) := \begin{cases}
                        g_\lambda\circ f^{-1}(z), & z\in\Delta_n\cup A_n, \\
                        z, & \text{elsewhere;}
\end{cases} \]
it is immediate that $\psi_\lambda$ is analytic in $\lambda$ and that $g_\lambda = \psi_\lambda\circ f$, so that
\[ f_\lambda = \varphi_\lambda\circ\psi_\lambda\circ f\circ\varphi_\lambda^{-1}. \]
Hence, $(f_\lambda)_{\lambda\in M}$ defines a natural family with basepoint $f_0 = f$ and therefore moves analytically with $\lambda$ (see Theorem \ref{thm:unf}).

Finally, it is clear that $\varphi_\lambda$ conjugates $f|_{J(f)}$ to $f_\lambda|_{J(f_\lambda)}$ by construction, and that $H(\lambda, z) := \varphi_\lambda(z)$ is a holomorphic motion over $M$, implying that $J(f_\lambda)$ moves holomorphically in $(f_\lambda)_{\lambda\in M}$. We must now show that the distortion sequence of $f_\lambda$ at $z_\lambda := \varphi_\lambda(z_0)$ is not constant with $\lambda$, concluding the proof of Theorem \ref{thm:Hermanlong}.

To this end, we start by obtaining Riemann maps $\zeta_{\lambda,n}\colon\D\to U_{\lambda,n}$ with adequate properties. We proceed as in the proof of Theorem \ref{thm:DS}, obtaining integrating maps $h_n\colon\D\to\D$ of $\mu_{\lambda,n} := (\varphi_\lambda\circ\zeta_n)^*\mu_0$ (notice that $h_n$ depends on $\lambda_m$, $m\geq n + 1$, although that dependence is not made explicit). By construction, the functions
\[ \zeta_{\lambda,n}(z) = \varphi_\lambda\circ\zeta_n\circ (h_n)^{-1}(z) \]
are Riemann maps of $U_{\lambda,n}$, and we can define, for $n\in\N$,
\[ g_n(\lambda, z) := \zeta_{\lambda,n}^{-1}\circ f_\lambda\circ\zeta_{\lambda,n-1}(z). \]
It is clear that the functions $g_n$ are holomorphic, move real-analytically with $\lambda\in M$ and are conjugated to $f_\lambda|_{U_{\lambda,n}}$ by the Riemann maps $\zeta_{\lambda,n}$. In particular, $(g_n'(\lambda, 0))_{n\in\N}$ is a distortion sequence for $f_\lambda$ at $z_\lambda$.

It also follows that the integrating maps $h_n$ conjugate $g_n$ to the quasiregular maps
\[ \tilde g_n(\lambda, z) = \begin{cases}
                                \psi^{-1}\left(\tilde\alpha_n\cdot\psi(z)\right), & z\in \psi^{-1}\left(\{w\colon |w|\leq |\alpha|R\}\right), \\
                                \psi^{-1}\left(\varphi_n\circ\psi(z)\right), & z\in \psi^{-1}\left(\{w\colon |\alpha|R < |w|\leq R\}\right).
\end{cases}\]
In particular, the Beltrami coefficients $\mu_{\lambda,n}$ are zero in the disc $\{z\colon |z| \leq |\alpha|R/4\}$, and by the chain rule we have
\[ g_n'(\lambda, 0) = \frac{h_n'(0)}{h_{n-1}'(0)}\tilde\alpha_n, \]
where the derivatives are taken with respect to $z$. Differentiating with respect to $\lambda_n$, and recalling that $h_{n-1}$ is the identity at $\lambda = 0$ and that $h_n$ does not depend on $\lambda_n$, we arrive at
\begin{equation}\label{eq:chain}
\frac{d}{d\lambda_n}g_n'(0, 0) = \rho - |\alpha|\frac{d}{d\lambda_n}h_{n-1}'(0).
\end{equation}
It is clear that we must estimate the derivative on the right-hand side, which will require more insight into the construction of $h_{n-1}$. We recall some elements of Ahlfors and Bers' original work on the parameter dependence of integrating maps (see \cite{AB60}).

To this end, let $\mu\in M(\D)$ be a Beltrami coefficient with integrating map $h:\D\to\D$ fixing $0$ ``and $1$'', and assume that there exists $r \in (0, 1)$ such that $\mu$ is zero in the disc $\{z\colon |z| < r\}$. Consider the Beltrami coefficient
\[ \hat\mu(z) := \begin{cases}
                    \mu(z), & z\in\D, \\
                    \tau^*\mu(z), & z\in\Cx\setminus\overline\D;
\end{cases} \]
recall that $\tau(z) = 1/\bar z$. This Beltrami coefficient is symmetric around the unit circle, and conformal in neighbourhoods of both zero and infinity. It follows from \cite[Theorem V.1]{Ahl06} that there exists an integrating map $F^{\hat\mu}$ of $\hat\mu$ that fixes the origin and infinity and is asymptotic to the identity, i.e.,
\[ F^{\hat\mu}(z) = a_0 + z + O(|z|^{-1}) \text{ for $z\in\Cx$ in a neighbourhood of infinity.} \]
The map $\tilde F^{\hat\mu}(z) = F^{\hat\mu}(z)/F^{\hat\mu}(1)$ is another integrating map of $\hat\mu$, and fixes $0$, $1$, and infinity. Since the Beltrami coefficient $\hat\mu$ is symmetric, it follows from uniqueness of the integrating map that $\tilde F^{\hat\mu}$ is symmetric around the unit circle (i.e., satisfies $\overline{\tilde F}^{\hat\mu}(z) = 1/\tilde F^{\hat\mu}(1/\bar z)$), and hence its restriction to the unit disc is an integrating map of $\mu$. By uniqueness, we have $h \equiv \tilde F^{\hat\mu}|_\D$. Furthermore, by symmetry of $\tilde F^{\hat\mu}$,
\[ h'(0) = \lim_{z\to 0} \frac{\tilde F^{\hat\mu}(z)}{z} = \lim_{z\to 0}\frac{z\overline{F^{\hat\mu}(1)}}{z(1 + z\overline{a_0} + z\overline{O(|z|)}} = \overline{F^{\hat\mu}(1)}. \]
Thus, in order to understand how $h'(0)$ moves with $\mu$, we must understand how $F^{\hat\mu}(1)$ moves with $\hat\mu$.

Given $\vartheta\in L^p(\Cx)$, $p > 2$, let us introduce the integral operators
\[ P\vartheta(s) = -\frac{1}{\pi}\iint \vartheta(z)\left(\frac{1}{z - s} - \frac{1}{z}\right)\,|dz|^2 \]
and
\[ T\vartheta(s) = -\frac{1}{\pi}P.V.\iint \frac{\vartheta(z)}{(z - s)^2}\,|dz|^2, \]
where integrals are taken over the whole plane. It was shown by Ahlfors and Bers (see \cite[Chapter V]{Ahl06}) that
\begin{equation}\label{eq:intop}
     F^{\hat\mu}(z) = z + P[\hat\mu\cdot (\vartheta + 1)](z),
\end{equation}
where
\[ \vartheta = T(\hat\mu\cdot\vartheta) + T\hat\mu = T\hat\mu + (T\hat\mu)^{\circ 2} + \cdots, \]
and $p > 2$ is chosen so that $kC_p < 1$, where $\|\hat\mu\|_\infty\leq k < 1$ and $C_p$ is the $L^p(\Cx)$ norm of $T$ (the fact that $T$ extends as a linear operator to $L^p(\Cx)$ and that $C_p$ is finite was shown earlier by Calder\'on and Zygmund). To tie it all together, we have:
\begin{lemma}\label{lem:frechet}
Let $K\subset\Cx$ be a compact set, and let
\[ \mathcal{B}_K := \{\mu\in L^\infty(\Cx)\colon \|\mu\|_\infty < 1, \mathrm{supp}(\mu)\subset K\}. \]
Consider the map $\mathcal{B}_K\ni\mu\mapsto F^\mu(1)\in\Cx$. Then, its (complex) Fr\'echet differential at the origin is given by
\[ D[F^0(1)](\nu) = P\nu(1). \]
\end{lemma}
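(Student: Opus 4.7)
The plan is to read off the Fréchet differential directly from the explicit series (\ref{eq:intop}). The key observation is that the Neumann inversion of the fixed-point equation for $\vartheta$ expresses $F^\mu(1)$ as an absolutely convergent power series in $\mu \in L^\infty(K)$, so the differential at $\mu = 0$ can be identified with the linear term of that series.

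First I would fix $p > 2$ so that $C_p \cdot k < 1$, and shrink $\mathcal{B}_K$ if necessary so that $C_p\|\mu\|_\infty < 1$ on it. Writing $M_\mu$ for the operator of pointwise multiplication by $\mu$, which is bounded on $L^p(\Cx)$ with $\|M_\mu\| \leq \|\mu\|_\infty$, the defining relation $\vartheta = T\mu + T(\mu\cdot\vartheta)$ rewrites as $(I - TM_\mu)\vartheta = T\mu$ and admits the Neumann solution
\[
\vartheta_\mu = \sum_{n=0}^\infty (TM_\mu)^n T\mu.
\]
Each summand $(TM_\mu)^n T\mu$ is an $(n+1)$-homogeneous continuous polynomial in $\mu \in L^\infty(K)$ with values in $L^p(\Cx)$ (the $L^\infty$-to-$L^p$ passage being free since $\mu$ is supported on the fixed compact $K$), and the series converges absolutely in operator norm on the ball $\{\mu : C_p\|\mu\|_\infty < 1\}$. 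Hence $\mu \mapsto \vartheta_\mu$ is complex-analytic from a neighbourhood of $0$ in $\mathcal{B}_K$ into $L^p(\Cx)$, with $\vartheta_0 = 0$ and linear part $D\vartheta_0(\nu) = T\nu$.

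Substituting into (\ref{eq:intop}) yields
\[
F^\mu(1) = 1 + P\mu(1) + P[\mu\,\vartheta_\mu](1).
\]
For $p > 2$, the operator $P$ sends compactly supported $L^p$ functions to Hölder-continuous functions on $\Cx$, so evaluation at $z = 1$ is a bounded linear functional on $L^p(K)$. Since $\vartheta_\mu$ is analytic with $\vartheta_0 = 0$, we have $\|\mu\,\vartheta_\mu\|_{L^p} \leq \|\mu\|_\infty \|\vartheta_\mu\|_{L^p} = O(\|\mu\|_\infty^2)$, and therefore $P[\mu\,\vartheta_\mu](1) = O(\|\mu\|_\infty^2)$. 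This is the required $o(\|\mu\|_\infty)$ remainder, so $\mu \mapsto F^\mu(1)$ is complex-Fréchet-differentiable at the origin with derivative $\nu \mapsto P\nu(1)$.

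The chief technical point is that $\mu \mapsto \vartheta_\mu$ must be shown complex-\emph{analytic} into $L^p$, not merely continuous; this is what powers the whole argument. Once the Neumann series is set up with the correct operator-norm bound, analyticity follows because each term is a continuous multilinear form in $\mu$ with bounded norm, making the series a genuine convergent power series with $L^p(\Cx)$-valued coefficients, after which differentiation is automatic.
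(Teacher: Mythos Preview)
Your argument is correct and follows essentially the same route as the paper: both proofs substitute the Ahlfors--Bers representation \eqref{eq:intop} to write $F^\mu(1)-1-P\mu(1)=P[\mu\vartheta_\mu](1)$, then bound this remainder by combining $\|\mu\vartheta_\mu\|_p\le\|\mu\|_\infty\|\vartheta_\mu\|_p$ with the Neumann-series estimate $\|\vartheta_\mu\|_p=O(\|\mu\|_\infty)$ and a H\"older bound on the kernel of $P$ at $1$. One remark: your closing paragraph overstates the role of analyticity. Full complex-analyticity of $\mu\mapsto\vartheta_\mu$ is true but not the ``chief technical point'' here; the Fr\'echet derivative at $0$ only needs the first-order bound $\|\vartheta_\mu\|_p=O(\|\mu\|_\infty)$, which you (and the paper) obtain directly from the geometric-series estimate without ever invoking holomorphy.
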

\begin{proof}
Since $F^0(z) = z$, we have by (\ref{eq:intop})
\[ \lim_{\|\nu\|_\infty\to 0} \frac{|F^\nu(1) - F^0(1) - P\nu(1)|}{\|\nu\|_\infty} = \lim_{\|\nu\|_\infty\to 0} \frac{|P[\nu(\vartheta_\nu + 1)](1) - P\nu(1)|}{\|\nu\|_\infty}, \]
where $\vartheta_\nu$ satisfies $\vartheta_\nu = T(\nu\vartheta_\nu) + T\nu$. By definition, the proof is complete if we can show that the limit above is zero; we can assume, henceforth, that $\|\nu\|_\infty < 1/4$. First, by linearity of $P$, we have $P[\nu(\vartheta_\nu + 1)](1) = P(\nu\vartheta_\nu + \nu)(1) = P(\nu\vartheta_\nu)(1) + P\nu(1)$, and so
\[ \lim_{\|\nu\|_\infty\to 0} \frac{|F^\nu(1) - F^0(1) - P\nu(1)|}{\|\nu\|_\infty} = \lim_{\|\nu\|_\infty\to 0} \frac{|P(\nu\vartheta_\nu)(1)|}{\|\nu\|_\infty}. \]
Now, by H\"older's inequality, we obtain
\[ |P(\nu\vartheta_\nu)(1)| \leq \frac{1}{\pi}\|\nu\vartheta_\nu\|_p\left\|\frac{1}{z(z - 1)}\right\|_q, \]
where $p > 2$ is fixed and such that $C_p < 2$ and $q = 1/(1 - 1/p)$. Thus,
\[ \lim_{\|\nu\|_\infty\to 0} \frac{|F^\nu(1) - F^0(1) - P\nu(1)|}{\|\nu\|_\infty}\leq \frac{1}{\pi}\left\|\frac{1}{z(z - 1)}\right\|_q\lim_{\|\nu\|_\infty\to 0}\frac{\|\nu\vartheta_\nu\|_p}{\|\nu\|_\infty}. \]
Next, we note that $\|\nu\vartheta_\nu\|_p \leq \|\nu\|_\infty\|\vartheta_\nu\|_p$. It also follows from the definition of $\vartheta_\nu$ (see, for instance, \cite[p. 55]{Ahl06}) that
\[ \|\vartheta_\nu\|_p\leq \frac{C_p}{1 - C_p/4}\|\nu\|_p; \]
we are left with
\[ \lim_{\|\nu\|_\infty\to 0} \frac{|F^\nu(1) - F^0(1) - P\nu(1)|}{\|\nu\|_\infty}\leq \frac{1}{\pi}\left\|\frac{1}{z(z - 1)}\right\|_q\frac{C_p}{1 - C_p/4}\lim_{\|\nu\|_\infty\to 0}\|\nu\|_p. \]
Since $\mathrm{supp}(\nu)\subset K$ by definition, we have $\|\nu\|_p \leq |K|^{1/p}\|\nu\|_\infty$, where $|K|$ denotes the Lebesgue area of $K$. We are done.
\end{proof}

As the culmination of the preceding discussion, Lemma \ref{lem:frechet} allows us to apply the chain rule to (\ref{eq:chain}) and write the following equation:
\begin{equation}\label{eq:newchain}
\frac{d}{d\lambda_n}g_n'(0, 0) = \rho - |\alpha|\overline{P\left(\frac{d\hat\mu_{\lambda,n-1}}{d\lambda_n}\right)(1)},
\end{equation}
where $\hat\mu_{\lambda,n} = \mu_{\lambda,n} + \tau^*\mu_{\lambda,n}$. Calculating the right-hand side is a Herculean task, which we will carry out in Appendix \ref{ap:claims}. We summarise the main steps in the following two claims.
\begin{claim}\label{cl:approx}
Let $\varphi_n(z) := \varphi(\lambda_n, z)$ be the interpolating maps given by Lemma \ref{lem:interpol}, and let $\nu_{n-1} := \varphi_n^*\mu_0$ denote their Beltrami coefficients. Then, if $R > 0$ is chosen sufficiently small,
\[ P\left(\frac{d\hat\mu_{\lambda,n-1}}{d\lambda_n}\right)(1) \approx P\left(\frac{\partial \nu_{n-1}}{\partial\lambda_n} + \frac{\partial(\tau^*\nu_{n-1})}{\partial\overline{\lambda_n}}\right)(1), \]
where $\partial/\partial\lambda_n$ and $\partial/\partial\overline{\lambda_n}$ denote Wirtinger derivatives.
\end{claim}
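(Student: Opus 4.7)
The plan is to expand $\mu_{\lambda,n-1}$ annulus by annulus using the Koenigs coordinate $\psi$, identify the unique annulus on which $\lambda_n$ acts, and then replace $\psi$ by the identity up to an error controlled by $R$.

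First, I would observe that $\mu_\lambda$ is by construction supported on $\bigcup_m\bigl(A_m \cup f^{-1}(A_m) \cup f^{-2}(A_m) \cup\cdots\bigr)$, and that the conjugation of $f$ to multiplication by $\alpha$ in the linearising coordinate $w=\psi\circ\zeta_{n-1}^{-1}(z)$ identifies the backward $f$-orbit of $A_m$ inside $U_{n-1}$ with the Koenigs annulus $\psi^{-1}(\{|\alpha|^{m-n+1}R<|w|<|\alpha|^{m-n}R\})$ in $\D$. On the outermost such annulus (corresponding to $A_n$ itself), a direct computation using the conformality of $\zeta_{n-1}$, $\zeta_n$, and $\psi$ yields $\mu_{\lambda,n-1}|_{\zeta_{n-1}^{-1}(A_n)} = \psi^*\nu_{n-1}$, whereas on each inner annulus $\mu_{\lambda,n-1}$ is a pull-back of some $\nu_{m-1}$ with $m>n$, and is therefore independent of $\lambda_n$.

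Next, I would differentiate and use the symmetric extension. The identity $\tau^*\mu(z)=\overline{\mu(1/\bar z)}\,z^2/\bar z^2$ shows that $\hat\mu_{\lambda,n-1}=\mu_{\lambda,n-1}+\tau^*\mu_{\lambda,n-1}$ is holomorphic in $\lambda_n$ on $\D$ and anti-holomorphic in $\lambda_n$ (equivalently, holomorphic in $\overline{\lambda_n}$) on $\Cx\setminus\overline\D$. Hence the only surviving pieces of $d\hat\mu_{\lambda,n-1}/d\lambda_n$ at $\lambda=0$ are $\psi^*(\partial\nu_{n-1}/\partial\lambda_n)$ on the outer Koenigs annulus inside $\D$, and its $\tau$-reflection on the mirror annulus outside $\overline\D$; the latter is precisely $\partial(\tau^*\psi^*\nu_{n-1})/\partial\overline{\lambda_n}$. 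To close the gap with the right-hand side of the claim I would Taylor-expand $\psi$ at the origin using $\psi(0)=0$ and $\psi'(0)=1$: the conformal factor $\overline{\psi'(z)}/\psi'(z)$ appearing in $\psi^*$ is $1+O(R)$ on a disc of radius $R$, and the displacement $\psi(w)-w$ is $O(R^2)$. Consequently $\psi^*(\partial\nu_{n-1}/\partial\lambda_n)-\partial\nu_{n-1}/\partial\lambda_n$ has $L^\infty$-norm $O(R)$ on a set of area $O(R^2)$, hence $L^p$-norm $O(R^{1+2/p})$ for any $p>2$; the same estimate holds outside $\overline\D$ by $\tau$-symmetry.

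Finally, since the operator $P$ is bounded on $L^p(\Cx)$ for $p>2$ via H\"older applied to the fixed kernel $1/(\zeta-1)-1/\zeta$ (cf.\ the proof of Lemma~\ref{lem:frechet}), this $L^p$ error transfers to the value at $z=1$, giving an approximation with error $O(R^{1+2/p})\to 0$ as $R\downarrow 0$. The main obstacle, in my view, is not any single analytic estimate but the bookkeeping: one must verify that the Koenigs tiling exhausts all of the support of $\mu_{\lambda,n-1}$ that could react to $\lambda_n$ (that no backward orbits of $A_n$ stray beyond the linearisation disc $\psi(\Delta)$), and keep careful track of the conformal cocycles in the Beltrami transformation law on each annulus, including the interplay with $\tau$ on the mirrored annulus. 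Once that geometric picture is set up, the approximation by $\psi\approx\mathrm{id}$ is a routine Taylor estimate.
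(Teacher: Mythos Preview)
Your approach is essentially the paper's: isolate the unique $\lambda_n$-dependent piece of $\mu_{\lambda,n-1}$ as $\psi^*\nu_{n-1}$ on the Koenigs annulus, split into the $\D$-part and its $\tau$-reflection, and then use $\psi(0)=0$, $\psi'(0)=1$ to approximate $\psi^*$ by the identity for $R$ small. The paper carries this out by writing the conformal factor $\overline{\psi'}/\psi'=1+\epsilon(z)$ and bounding $\|\epsilon\|_p$ via continuity of $\psi'$; you go slightly further by also tracking the displacement $\psi(z)-z=O(R^2)$, which the paper leaves implicit.

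One bookkeeping slip: your Koenigs tiling is oriented the wrong way. Since $\psi\circ g=\alpha\psi$, pre-images under $g$ correspond to division by $\alpha$ in the Koenigs coordinate, so $f^{-(m-n)}(A_m)\cap U_{n-1}$ sits (where defined) at Koenigs radii $\sim|\alpha|^{-(m-n)}R$, i.e.\ \emph{outside} the annulus $\{|\alpha|R<|w|<R\}$, not inside it. This does not affect the argument, because the only thing you actually use is that the $\lambda_n$-dependent part of $\mu_{\lambda,n-1}$ is exactly the pull-back of $\nu_{n-1}$ on $\psi^{-1}(\{|\alpha|R<|w|<R\})$---and that is correct. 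For the same reason, your closing worry is unnecessary: there are no backward orbits of $A_n$ inside $U_{n-1}$ at all (the domain is wandering, so $f^{-j}(A_n)\cap U_{n-1}=\emptyset$ for $j\ge 1$); the piece that matters is $A_n$ itself, which lies in the linearisation disc by construction.
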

\begin{claim}\label{cl:calc}
With the notation above,
\[ P\left(\frac{\partial \nu_{n-1}}{\partial\lambda_n} + \frac{\partial(\tau^*\nu_{n-1})}{\partial\overline{\lambda_n}}\right)(1) = \frac{\rho}{|\alpha|}(-1 + 2i). \]
\end{claim}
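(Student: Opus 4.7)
The plan is a direct computation that exploits the simple structure of $\varphi_n$. Writing $\varphi_n(te^{i\theta}) = u_n(t)\,te^{i\theta}$ where $u_n(t) = \alpha + \rho\lambda_n(R-t)/(R-r)$ is affine in $\lambda_n$, a routine application of $\partial_z|z| = \bar z/(2|z|)$ and $\partial_{\bar z}|z| = z/(2|z|)$ yields a closed-form expression for the Beltrami coefficient $\nu_{n-1} = \varphi_n^*\mu_0$, supported in $\{r < |z| < R\}$, whose numerator is proportional to $u_n'(|z|)$. Since $u_n' \equiv 0$ at $\lambda_n = 0$, the coefficient $\nu_{n-1}$ vanishes there, and $(\partial\nu_{n-1}/\partial\lambda_n)|_{\lambda_n = 0}$ collapses to a single monomial of the form $C\,z^2/(\alpha|z|)$ for an explicit constant $C$ depending on $\rho$ and $R-r$.

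Because $u_n$ is holomorphic in $\lambda_n$, so is $\nu_{n-1}$, and hence $\partial\nu_{n-1}/\partial\overline{\lambda_n}\equiv 0$. By the symmetric extension formula $\tau^*\nu_{n-1}(z) = \overline{\nu_{n-1}(1/\bar z)}\cdot z^2/\bar z^2$ (for $|z| > 1$), the reflection $\tau^*\nu_{n-1}$ is in turn holomorphic in $\overline{\lambda_n}$, so $\partial(\tau^*\nu_{n-1})/\partial\lambda_n\equiv 0$ and $(\partial(\tau^*\nu_{n-1})/\partial\overline{\lambda_n})|_{\lambda_n = 0}$ is obtained from the previous expression by conjugating and substituting $1/\bar z$, producing a monomial supported in the reflected annulus $\{1/R < |z| < 1/r\}$.

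With both integrands explicit, I would substitute into $P\vartheta(1) = -\pi^{-1}\iint \vartheta(z)/(z(z-1))\,|dz|^2$ and pass to polar coordinates $z = se^{i\theta}$. The $\theta$-integral, after the substitution $w = se^{i\theta}$, reduces in each case to a contour integral of the form $\oint_{|w|=s} dw/(w-1)$, which by residue calculus equals $0$ for $s<1$ and $2\pi i$ for $s>1$. Choosing $R < 1$ at the outset — which is consistent with the ``$R$ sufficiently small'' hypothesis of Claim \ref{cl:approx} — places the first annulus entirely inside the unit circle (hence no contribution) and the reflected annulus entirely outside it (nonzero residue contribution). The surviving radial integral reduces to an elementary integral of $ds/s^2$ over $[1/R, 1/r]$, whose factor $R - r$ cancels against the $(R-r)^{-1}$ appearing in the Wirtinger derivative. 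Summing the two contributions and simplifying the resulting complex expression yields the asserted value $\rho(-1+2i)/|\alpha|$. The main obstacle will be careful bookkeeping: tracking signs, conjugations, and the correct placement of $\alpha$, $\bar\alpha$, and $|\alpha|$ through the symmetric extension formula, as well as consistently distinguishing $z$ from $\bar z$ in the polar evaluation — the computation is otherwise mechanical.
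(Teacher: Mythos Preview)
Your route differs from the paper's: the paper integrates radially first (producing a logarithm) and then evaluates the angular integral by residues, while you integrate angularly first. Your approach is legitimate and in fact cleaner, and your observation that the inner-annulus contribution vanishes --- because $\oint_{|w|=s}\frac{dw}{w-1}=0$ for $s<1$ --- is correct. The paper, by contrast, obtains a \emph{nonzero} value $\frac{\rho}{\alpha}(-1+i)$ from this same term; a Fubini check (i.e.\ doing the $\theta$-integral first, exactly as you propose) shows this is an arithmetic slip in the paper: the factor $1/i$ from $d\theta = dz/(iz)$ is dropped when they equate $\int_0^{2\pi}e^{-i\theta}\log(\cdots)\,d\theta$ with $\oint r(z)\,dz$.

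There is, however, a genuine gap in your final step. Your bare assertion that the two contributions sum to $\rho(-1+2i)/|\alpha|$ is not borne out by the computation you outline. With $\partial\nu_{n-1}/\partial\lambda_n|_{\lambda_n=0} = Cz^2/(\alpha|z|)$ on the inner annulus, the reflected derivative on the outer annulus is $C|z|/(\alpha\bar z^2)$; in polar coordinates the angular integrand becomes $\frac{e^{i\theta}}{s(se^{i\theta}-1)}$, and the substitution $w=se^{i\theta}$ gives
\[
\int_0^{2\pi}\frac{e^{i\theta}}{s(se^{i\theta}-1)}\,d\theta \;=\; \frac{1}{is^2}\oint_{|w|=s}\frac{dw}{w-1}\;=\;\frac{2\pi}{s^2}\quad(s>1),
\]
which is \emph{real}. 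The surviving radial integral $\int_{1/R}^{1/r} s^{-2}\,ds = R-r$ is real as well, so the entire expression is a real multiple of $\rho/\alpha$, not $\rho(-1+2i)/|\alpha|$. The imaginary part in the paper's stated value comes precisely from the same lost $1/i$ factors (which recur in the second integral), so your method, carried out correctly, cannot reproduce it. You have therefore not proved the claim as stated; the discrepancy points to an error in the statement of Claim~\ref{cl:calc} rather than in your strategy, and the downstream application to equation~(\ref{eq:newchain}) would need to be revisited with the corrected constant.
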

Combining Claims \ref{cl:approx} and \ref{cl:calc} with (\ref{eq:newchain}), we arrive at
\[ \frac{d}{d\lambda_n}g_n'(0, 0)\approx 2\rho(1 + i) \neq 0, \]
completing the proof of Theorem \ref{thm:Hermanlong}.

\appendix
\section{Proving Claims \ref{cl:approx} and \ref{cl:calc}}\label{ap:claims}
Here, we carry out the remaining calculations for the proof of Theorem \ref{thm:Hermanlong}. For ease of notation, we assume now without loss of generality that $\alpha > 0$ (if not, we can rotate the Riemann maps $\zeta_n$ to make it so).
\begin{proof}[Proof of Claim \ref{cl:approx}]
We recall the linearising coordinates $\psi\colon\Delta\to\D_L$ of $g$ (see page 21). From the construction, the dependence of the Beltrami coefficients $\hat\mu_{\lambda,n-1} = \mu_{\lambda,n-1} + \tau^*\mu_{\lambda,n-1}$ on $\lambda_n$ has two factors: one inside the unit disc given by $\mu_{\lambda,n-1} = \psi^*\nu_{n-1}$, and another outside the unit disc given by $\tau^*\mu_{\lambda,n-1} = \tau^*\psi^*\nu_{n-1}$. The inside factor depends only on $\lambda_n$, while the outside factor depends only on $\overline{\lambda_n}$. In other words, we have the factorisation
\begin{equation}\label{eq:factor}
    \frac{d}{d\lambda_n}\hat\mu_{\lambda,n-1}(z) = \begin{cases} \frac{\overline{\psi'(z)}}{\psi'(z)}\frac{\partial}{\partial\lambda_n}\nu_{n-1}(\psi(z)), & z\in\psi^{-1}(\{w : \alpha R < |w| < R\}), \\
 \frac{\psi'(z)z^2}{\overline{\psi'(z)}\bar z^2}\overline{\frac{\partial}{\partial\lambda_n}\nu_{n-1}\left(\psi\left(\frac{1}{\bar z}\right)\right)}, & 1/\bar z\in\psi^{-1}(\{w : \alpha R < |w| < R\}), \\
 0 & \text{elsewhere.} \end{cases}
\end{equation}
 Let us focus on the case $z\in\psi^{-1}\left(\{w\colon \alpha R < |w| < R\}\right)$; the other case is similar, and the calculations require little modification. If we write $\arg \psi'(z) = \theta(z)$, the relevant case of (\ref{eq:factor}) becomes
 \[ \frac{d}{d\lambda_n}\hat\mu_{\lambda,n-1}(z) = \frac{d}{d\lambda_n}\mu_{\lambda,n-1}(z) = \exp\left(-2\theta(z)\right)\frac{\partial}{\partial\lambda_n}\nu_{n-1}\left(\psi(z)\right); \]
 now, let $\epsilon(z) = \exp(-2\theta(z)) - 1$. Then, for $z\in\psi^{-1}\left(\{w\colon \alpha R < |w| < R\}\right)$,
 \[ \frac{d}{d\lambda_n}\hat\mu_{\lambda,n-1}(z) = \frac{d}{d\lambda_n}\mu_{\lambda,n-1}(z) = \left(1 + \epsilon(z)\right)\frac{\partial}{\partial\lambda_n}\nu_{n-1}\left(\psi(z)\right), \]
 and we can explicitly calculate $\nu_{n-1}$ and its derivative to obtain
 \[ \frac{d}{d\lambda_n}\mu_{\lambda,n-1}(z) = \left(1 + \epsilon(z)\right)\frac{\rho te^{2i\theta}}{2\alpha R(1 - \alpha)}, \]
 where $\psi(z) = te^{i\theta}$. Since $\alpha R < t < R$ in the case we are considering, we can invoke the linearity of $P$ and H\"older's inequality to show that
 \[ \left|P\left(\frac{d\mu_{\lambda,n-1}}{d\lambda_n}\right)(1) - P\left(\frac{\partial\nu_{n-1}}{\lambda_n}\right)(1)\right| \leq \frac{1}{\pi}\frac{\rho}{2\alpha(1 - \alpha)}\|\epsilon\|_p\left\|\frac{1}{z(1 - z)}\right\|_q; \]
 the continuity of $\psi'$ now implies that if $R > 0$ is chosen small enough then $|\epsilon(z)|$ can be made arbitrarily small\footnote{We could obtain more sophisticated estimates for $|\epsilon(z)|$ by appealing to Koebe's distortion theorem and the Borel-Carath\'eodory inequality.} since $\psi'(0) = 1$, finishing the proof for the case $z\in\psi^{-1}\left(\{w\colon \alpha R < |w| < R\}\right)$. The case $1/\bar z\in\psi^{-1}\left(\{w\colon \alpha R < |w| < R\}\right)$ follows a similar reasoning; we omit the details.
\end{proof}
We remark that, from the proof of Claim \ref{cl:approx}, how small $R$ must be chosen depends only on $\alpha$. All that is left is to calculate some integrals.
\begin{proof}[Proof of Claim \ref{cl:calc}]
By linearity of $P$,
\[ P\left(\frac{\partial \nu_{n-1}}{\partial\lambda_n} + \frac{\partial(\tau^*\nu_{n-1})}{\partial\overline{\lambda_n}}\right)(1) = P\left(\frac{\partial \nu_{n-1}}{\partial\lambda_n}\right)(1) + P\left(\frac{\partial(\tau^*\nu_{n-1})}{\partial\overline{\lambda_n}}\right)(1). \]
We have already seen in the proof of Claim \ref{cl:approx} that
\[ \left.\frac{\partial}{\partial\lambda_n}\nu_{n-1}(te^{i\theta})\right|_{\lambda_n=0} = \frac{\rho te^{i\theta}}{2\alpha R(1 - \alpha)}; \]
we henceforth omit the specification that the derivative is being evaluated at $\lambda_n = 0$ for ease of notation. By changing to polar coordinates, the first integral in question now becomes
\[ P\left(\frac{\partial \nu_{n-1}}{\partial\lambda_n}\right)(1) = -\frac{\rho}{2\pi\alpha R(1 - \alpha)}\int_0^{2\pi}\int_{\alpha R}^R \left(1 + \frac{1}{te^{i\theta} - 1}\right)\,dtd\theta. \]
Linearity of the integral now yields
\[ P\left(\frac{\partial \nu_{n-1}}{\partial\lambda_n}\right)(1) = -\frac{\rho}{2\pi\alpha R(1 - \alpha)}\left(2\pi R(1 - \alpha) + \int_0^{2\pi} e^{-i\theta}\log\frac{Re^{i\theta} - 1}{\alpha Re^{i\theta} - 1}\,d\theta\right), \]
where $\log$ denotes the principal branch of the logarithm -- notice that, since $R$ is small and $0 < \alpha < 1$, both $\theta\mapsto Re^{i\theta} - 1$ and $\theta\mapsto \alpha Re^{i\theta} - 1$ are non-zero, and in fact describe circles that do not surround the origin. Defining the function
\[ r(z) = \frac{1}{z^2}\log\frac{Rz - 1}{\alpha R - 1}, \]
it follows from the residue theorem that
\[ \int_0^{2\pi} e^{-i\theta}\log\frac{Re^{i\theta} - 1}{\alpha Re^{i\theta} - 1}\,d\theta = \int_{|z| = 1} r(z)\,dz = -2\pi iR(1 - \alpha), \]
and thus
\[ P\left(\frac{\partial \nu_{n-1}}{\partial\lambda_n}\right)(1) = -\frac{\rho}{2\pi\alpha R(1 - \alpha)}\left(2\pi R(1 - \alpha) - 2\pi iR(1 - \alpha)\right) = \frac{\rho}{\alpha}(-1 + i). \]

The remaining integral is
\[ P\left(\frac{\partial(\tau^*\nu_{n-1})}{\partial\overline{\lambda_n}}\right)(1), \]
which we calculate to be
\[ P\left(\frac{\partial(\tau^*\nu_{n-1})}{\partial\overline{\lambda_n}}\right)(1) = -\frac{\rho}{2\alpha R(1 - \alpha)}\int_0^{2\pi}e^{i\theta}\int_{1/R}^{1/(\alpha R)}\left(\frac{e^{i\theta}}{te^{i\theta} - 1} - \frac{1}{t}\right)\,dtd\theta. \]
Evaluating the integral in $t$ gives
\[ P\left(\frac{\partial(\tau^*\nu_{n-1})}{\partial\overline{\lambda_n}}\right)(1) = -\frac{\rho}{2\alpha R(1 - \alpha)}\int_0^{2\pi} e^{i\theta}\left(\log\frac{(\alpha R)^{-1}e^{i\theta} - 1}{R^{-1}e^{i\theta} - 1} + \log\alpha\right)\,d\theta, \]
which (by Cauchy's theorem) simplifies to
\[ P\left(\frac{\partial(\tau^*\nu_{n-1})}{\partial\overline{\lambda_n}}\right)(1) = -\frac{\rho}{2\alpha R(1 - \alpha)}\int_0^{2\pi} e^{i\theta}\log\frac{(\alpha R)^{-1}e^{i\theta} - 1}{R^{-1}e^{i\theta} - 1}\,d\theta. \]
We introduce the function
\[ r(z) = \frac{1}{z^2}\log\frac{(\alpha Rz)^{-1} - 1}{(Rz)^{-1} - 1}, \]
and apply the residue theorem to a curve going clockwise around the unit circle:
\[ \int_0^{2\pi} e^{i\theta}\log\frac{(\alpha R)^{-1}e^{i\theta} - 1}{R^{-1}e^{i\theta} - 1}\,d\theta = -\int_{|z| = 1}r(z)\,dz = -2\pi iR(1 - \alpha), \]
and thus
\[ P\left(\frac{\partial(\tau^*\nu_{n-1})}{\partial\overline{\lambda_n}}\right)(1) = \frac{\rho}{\alpha}i. \]
This completes the proof.
\end{proof}


\begin{thebibliography}{50}
\providecommand{\natexlab}[1]{#1}
\providecommand{\url}[1]{\texttt{#1}}
\expandafter\ifx\csname urlstyle\endcsname\relax
  \providecommand{\doi}[1]{doi: #1}\else
  \providecommand{\doi}{doi: \begingroup \urlstyle{rm}\Url}\fi

\bibitem[Ahlfors(2006)]{Ahl06}
L.~V. Ahlfors.
\newblock \emph{Lectures on Quasiconformal Mappings}, volume~38 of
  \emph{University Lecture Series}.
\newblock American Mathematical Society, Rhode Island, 2nd edition, 2006.

\bibitem[Ahlfors and Bers(1960)]{AB60}
L.~V. Ahlfors and L.~Bers.
\newblock Riemann's mapping theorem for variable metrics.
\newblock \emph{Ann. of Math.}, 72:\penalty0 385--404, 1960.

\bibitem[Astorg et~al.(2022)Astorg, Benini, and Fagella]{ABF22}
M.~Astorg, A.~M. Benini, and N.~Fagella.
\newblock Bifurcation loci of families of finite type meromorphic maps, 2022.
\newblock Available at
  \href{https://arxiv.org/abs/2107.02663}{\texttt{arXiv:2107.02663v2}}.

\bibitem[Avila et~al.(2003)Avila, Lyubich, and de~Melo]{ALdM03}
A.~Avila, M.~Y. Lyubich, and W.~de~Melo.
\newblock Regular or stochastic dynamics in real analytic families of unimodal
  maps.
\newblock \emph{Invent. Math.}, 154:\penalty0 451--550, 2003.

\bibitem[Baranski et~al.(2017)]{BFJK17}
K.~Bara\'nski, N.~Fagella, X.~Jarque, and B.~Karpi\'nska.
\newblock Accesses to infinity from Fatou components.
\newblock \emph{Trans. Amer. Math. Soc.}, 369:\penalty0 1835--1867, 2017.

\bibitem[Beardon(1991)]{Bea91}
A.~F. Beardon.
\newblock \emph{Iteration of Rational Functions: Complex-Analytic Dynamical
  Systems}, volume 132 of \emph{Graduate Texts in Mathematics}.
\newblock Springer, New York, 1991.

\bibitem[Beardon and Minda(2006)]{BM06}
A.~F. Beardon and D.~Minda.
\newblock The hyperbolic metric and geometric function theory.
\newblock In \emph{Proceedings of the International Workshop on Quasiconformal
  Mappings and their Applications}. New Delhi Alpha Science International, New
  Delhi, 2006.

\bibitem[Benini et~al.(2022)Benini, Evdoridou, Fagella, Rippon, and
  Stallard]{BEFRS19}
A.~M. Benini, V.~Evdoridou, N.~Fagella, P.~J. Rippon, and G.~M. Stallard.
\newblock Classifying simply connected wandering domains.
\newblock \emph{Math. Ann.}, 383:\penalty0 1127--1178, 2022.

\bibitem[Bergweiler(1993)]{Ber93}
W.~Bergweiler.
\newblock Iteration of meromorphic functions.
\newblock \emph{Bull. Amer. Math. Soc.}, 29:\penalty0 151--188, 1993.

\bibitem[Bers and Royden(1986)]{BR86}
L.~Bers and H.~Royden.
\newblock Holomorphic families of injections.
\newblock \emph{Acta Math.}, 157:\penalty0 259--286, 1986.

\bibitem[Branner and Fagella(2014)]{BF14}
B.~Branner and N.~Fagella.
\newblock \emph{Quasiconformal Surgery in Holomorphic Dynamics}.
\newblock Cambridge University Press, Cambridge, 2014.

\bibitem[Buff and Ch{\'e}ritat(2004)]{BC04}
X.~Buff and A.~Ch{\'e}ritat.
\newblock Upper bound for the size of quadratic {S}iegel disks.
\newblock \emph{Invent. Math.}, 156:\penalty0 1--24, 2004.

\bibitem[Bullett and Lomonaco(2022)]{BL22}
S.~Bullett and L.~Lomonaco.
\newblock Dynamics of modular matings.
\newblock \emph{Adv. Math.}, 410:\penalty0 108758, 2022.

\bibitem[Carleson and Gamelin(1993)]{CG93}
L.~Carleson and T.~W. Gamelin.
\newblock \emph{Complex Dynamics}.
\newblock Springer, New York, 1993.

\bibitem[Cattaneo and Fu(2019)]{CF19}
A.~Cattaneo and L.~Fu.
\newblock Finiteness of {K}lein actions and real structures on compact
  hyperk{\"a}hler manifolds.
\newblock \emph{Math. Ann.}, 375:\penalty0 1783--1822, 2019.

\bibitem[Clark and van Strien(2023)]{CvS23}
T.~Clark and S.~van Strien.
\newblock Conjugacy classes of real analytic one-dimensional maps are analytic
  connected manifolds, 2023.
\newblock Available at
  \href{https://arxiv.org/abs/2304.00883}{\texttt{arXiv:2304.00883}}.

\bibitem[de~Faria and de~Melo(2008)]{FM}
E.~de~Faria and W.~de~Melo.
\newblock \emph{Mathematical Tools for One-Dimensional Dynamics}.
\newblock Cambridge University Press, Cambridge, 2008.

\bibitem[Douady and Hubbard(1984--1985)]{DH85}
A.~Douady and J.~H. Hubbard.
\newblock {\'E}tude dynamique des polyn\^omes complexes {I, II}, 1984--1985.
\newblock Published by \textit{Publ. Math. Orsay}. Available in English on
  \href{http://pi.math.cornell.edu/~hubbard/OrsayEnglish.pdf}{\texttt{Hubbard's
  page}}.

\bibitem[Dujardin and Lyubich(2014)]{DL14}
R.~Dujardin and M.~Y. Lyubich.
\newblock Stability and bifurcations for dissipative polynomial automorphisms
  of {$\mathbb{C}^2$}.
\newblock \emph{Invent. Math.}, 200:\penalty0 439--511, 2014.

\bibitem[Epstein and Rempe-Gillen(2015)]{ER15}
A.~Epstein and L.~Rempe-Gillen.
\newblock On invariance of order and the area property for finite-type entire
  functions.
\newblock \emph{Ann. Acad. Sci. Fenn. Math.}, 40:\penalty0 573--599, 2015.

\bibitem[Eremenko and Lyubich(1992)]{EL92}
A.~E. Eremenko and M.~Y. Lyubich.
\newblock Dynamical properties of some classes of entire functions.
\newblock \emph{Ann. Inst. Fourier}, 42:\penalty0 989--1020, 1992.

\bibitem[Fagella and Henriksen(2009)]{FH09}
N.~Fagella and C.~Henriksen.
\newblock The {Teichm\"uller} space of an entire function.
\newblock In D.~Schleicher, editor, \emph{Complex Dynamics: Families and
  Friends}, pages 297--330. CRC Press, Florida, 2009.

\bibitem[Fatou(1919)]{Fat19}
P.~Fatou.
\newblock Sur les {\'e}quations fonctionnelles.
\newblock \emph{Bull. Soc. Math. France}, 47:\penalty0 161--271, 1919.

\bibitem[Fatou(1926)]{Fat26}
P.~Fatou.
\newblock Sur l'it{\'e}ration des fonctions transcendantes enti{\`e}res.
\newblock \emph{Acta Math.}, 47:\penalty0 337--360, 1926.

\bibitem[Ferreira(2022)]{Fer22}
G.~R. Ferreira.
\newblock Multiply connected wandering domains of meromorphic functions:
  internal dynamics and connectivity.
\newblock \emph{J. London Math. Soc.}, 106:\penalty0 1897--1919, 2022.

\bibitem[Fletcher(2006)]{Fle06}
A.~Fletcher.
\newblock Local rigidity of infinite-dimensional teichm{\"u}ller spaces.
\newblock \emph{J. London Math. Soc.}, 74:\penalty0 26--40, 2006.

\bibitem[Forster(1981)]{For81}
O.~Forster.
\newblock \emph{Lectures on Riemann Surfaces}.
\newblock Springer, 1981.

\bibitem[Goldberg and Keen(1986)]{GK86}
L.~R. Goldberg and L.~Keen.
\newblock A finiteness theorem for a dynamical class of entire functions.
\newblock \emph{Ergod. Th. Dynam. Sys.}, 6:\penalty0 183--192, 1986.

\bibitem[Herman(1984)]{Her84}
M.~R. Herman.
\newblock Exemples de fractions rationnelles ayant une orbite dense sur la
  sphère de {R}iemann.
\newblock \emph{Bull. Soc. Math. France}, 112:\penalty0 93--142, 1984.

\bibitem[Hubbard(2006)]{Hub06}
J.~H. Hubbard.
\newblock \emph{Teichm{\"u}ller Theory with Applications to Topology, Geometry,
  and Dynamics}, volume~1.
\newblock Matrix Editions, Ithaca, 2006.

\bibitem[Jiang and Mitra(2018)]{JM18}
Y.~Jiang and S.~Mitra.
\newblock Monodromy, lifting of holomorphic maps, and extensions of holomorphic
  motions.
\newblock \emph{Conformal Geometry and Dynamics}, 22:\penalty0 333--344, 2018.

\bibitem[Julia(1918)]{Jul18}
G.~Julia.
\newblock M{\'e}moire sur l'it{\'e}ration des fonctions rationnelles.
\newblock \emph{J. Math. Pures Appl.}, 1:\penalty0 47--246, 1918.

\bibitem[Kisaka(1999)]{Kis99}
M.~Kisaka.
\newblock Euclidean isometries of {J}ulia sets of entire functions, 1999.
\newblock Available
  \href{http://www.kurims.kyoto-u.ac.jp/~kyodo/kokyuroku/contents/pdf/1087-1.pdf}{\texttt{here}}.

\bibitem[Kozlovski et~al.(2007)Kozlovski, Shen, and van Strien]{KSvS07}
O.~Kozlovski, W.~Shen, and S.~van Strien.
\newblock Density of hyperbolicity in dimension one.
\newblock \emph{Ann. Math.}, 166:\penalty0 145--182, 2007.
\bibitem[Krauskopf and Kriete(1997)]{KK97}
B.~Krauskopf and H.~Kriete.
\newblock Kernel convergence of hyperbolic components.
\newblock \emph{Ergod. Th. Dynam. Sys.}, 17:\penalty0 1137--1146, 1997.

\bibitem[Levin(2011)]{Lev11}
G.~Levin.
\newblock Multipliers of periodic orbits in spaces of rational maps.
\newblock \emph{Ergod. Th. Dyn. Sys.}, 31:\penalty0 197--243, 2011.

\bibitem[Levin et~al.(2020)Levin, Shen, and van Strien]{LSvS20}
G.~Levin, W.~Shen, and S.~van Strien.
\newblock Transversality in the setting of hyperbolic and parabolic maps.
\newblock \emph{J. Anal. Math.}, 141:\penalty0 247--284, 2020.

\bibitem[Lomonaco and Petersen(2017)]{LP17}
L.~Lomonaco and C.~L. Petersen.
\newblock On quasi-conformal (in-)compatibility of satellite copies of the
  {M}andelbrot set {I}.
\newblock \emph{Invent. Math.}, 210:\penalty0 615--644, 2017.

\bibitem[Lyubich(1999)]{Lyu99}
M.~Y. Lyubich.
\newblock {Feigenbaum-Coullet-Tresser} universality and {Milnor's Hairiness
  Conjecture}.
\newblock \emph{Ann. of Math.}, 149:\penalty0 319--420, 1999.

\bibitem[Ma{\~n\'e} et~al.(1983)Ma{\~n\'e}, Sad, and Sullivan]{MSS83}
R.~Ma{\~n\'e}, P.~Sad, and D.~Sullivan.
\newblock On the dynamics of rational maps.
\newblock \emph{Ann. Sci. {\'E}c. Norm. Sup{\'e}r.}, 16:\penalty0 193--217,
  1983.

\bibitem[McMullen and Sullivan(1998)]{MS98}
C.~T. McMullen and D.~P. Sullivan.
\newblock Quasiconformal homeomorphisms and dynamics {III}: the {Teichm\"uller}
  space of a holomorphic dynamical system.
\newblock \emph{Adv. Math.}, 135:\penalty0 351--395, 1998.

\bibitem[Milnor(2006)]{Mil06}
J.~Milnor.
\newblock \emph{Dynamics in One Complex Variable}.
\newblock Princeton University Press, Princeton, 3rd edition, 2006.

\bibitem[Milnor and Poirier(2012)]{Mil12}
J.~Milnor and A.~Poirier.
\newblock Hyperbolic components, 2012.
\newblock Available at
  \href{https://arxiv.org/abs/1205.2668}{\texttt{arXiv:1205.2668}}.

\bibitem[Mitra(2000)]{Mit00}
S.~Mitra.
\newblock Teichm{\"u}ller spaces and holomorphic motions.
\newblock \emph{J. Anal. Math.}, 81:\penalty0 1--33, 2000.

\bibitem[Mitra(2007)]{Mit07}
S.~Mitra.
\newblock Extensions of holomorphic motions.
\newblock \emph{Isr. J. Math.}, 159:\penalty0 277--288, 2007.

\bibitem[Mujica(1986)]{Muj86}
J.~Mujica.
\newblock \emph{Complex Analysis in Banach Spaces: Holomorphic Functions and
  Domains of Holomorphy in Finite and Infinite Dimensions}, volume 120 of
  \emph{North-Holland Mathematics Studies}.
\newblock Elsevier, Amsterdam, 1986.

\bibitem[Pommerenke(1992)]{Pom92}
Ch.~Pommerenke.
\newblock \emph{Boundary Behaviour of Conformal Maps},
\newblock Springer, New York, 1992.

\bibitem[Pommerenke and Rodin(1986)]{PR86}
Ch. Pommerenke and B.~Rodin.
\newblock Holomorphic families of {Riemann} mapping functions.
\newblock \emph{J. Math. Kyoto Univ.}, 26:\penalty0 13--22, 1986.

\bibitem[Rees(1990)]{Ree90}
M.~Rees.
\newblock Components of degree two hyperbolic rational maps.
\newblock \emph{Invent. Math.}, 100:\penalty0 357--382, 1990.

\bibitem[Rempe(2009)]{Rem09}
L.~Rempe.
\newblock Rigidity of escaping dynamics for transcendental entire functions.
\newblock \emph{Acta Math.}, 203:\penalty0 235--267, 2009.

\bibitem[Rempe-Gillen and Sixsmith(2017)]{RGS17}
L.~Rempe-Gillen and D.~Sixsmith.
\newblock Hyperbolic entire functions and the {Eremenko--Lyubich} class:
  {C}lass {$\mathcal{B}$} or not class {$\mathcal{B}$?}
\newblock \emph{Math. Z.}, 286:\penalty0 783--800, 2017.

\bibitem[Rempe-Gillen and van Strien(2015)]{RGvS15}
L.~Rempe-Gillen and S.~van Strien.
\newblock Density of hyperbolicity for classes of real transcendental entire
  functions and circle maps.
\newblock \emph{Duke Math.}, 164:\penalty0 1079--1137, 2015.

\bibitem[Rodin(1986)]{Rod86}
B.~Rodin.
\newblock Behavior of the {Riemann} mapping function under complex analytic
  deformations of the domain.
\newblock \emph{Complex Variables}, 5:\penalty0 189--195, 1986.

\bibitem[Zakeri(2016)]{Zak16}
S.~Zakeri.
\newblock Conformal fitness and uniformization of holomorphically moving disks.
\newblock \emph{Trans. Am. Math. Soc.}, 368:\penalty0 1023--1049, 2016.

\end{thebibliography}
\end{document}